\newtheorem{theorem}{Theorem}[section]
\newtheorem{lemma}[theorem]{Lemma}
\newtheorem{cor}[theorem]{Corollary}
\newtheorem{conjecture}[theorem]{Conjecture}
\newtheorem{prop}[theorem]{Proposition}
\theoremstyle{definition}
\newtheorem{definition}[theorem]{Definition}
\newtheorem{example}[theorem]{Example}
\theoremstyle{remark}
\newtheorem{remark}[theorem]{Remark}
\numberwithin{equation}{section}
\newcommand{\bR}{{\mathbb{R}}}
  \newcommand{\B}{{\mathcal{B}}}
  \newcommand{\C}{{\mathcal{C}}}
  \newcommand{\D}{{\mathcal{D}}}
  \newcommand{\G}{{\mathcal{G}}}
\renewcommand{\H}{{\mathcal{H}}}
\renewcommand{\P}{{\mathcal{P}}}
  \newcommand{\U}{{\mathcal{U}}}
	\newcommand{\ind}{\operatorname{ind}}
\begin{document}

\title{The generic rigidity of triangulated spheres with blocks and holes}


\author{J. Cruickshank}
\address{Sch. of Math., Stats. \& Appl. Math.,  
NUI Galway, Ireland.}
\curraddr{}
\email{james.cruickshank@nuigalway.ie}
\thanks{}

\author{D. Kitson}
\address{Dept.\ Math.\ \& Stats., Lancaster University,
Lancaster LA1 4YF \\U.K.
}
\curraddr{}
\email{d.kitson@lancaster.ac.uk}
\thanks{}

\author{S.C. Power}
\address{Dept.\ Math.\ \& Stats., Lancaster University,
Lancaster LA1 4YF, U.K.
}
\curraddr{}
\email{s.power@lancaster.ac.uk}
\thanks{} 

\subjclass[2010]{Primary 52C25 Secondary 05C75}

\date{}


\begin{abstract}
A simple graph $G=(V,E)$ is   $3$-rigid if its generic bar-joint frameworks in $\bR^3$ are infinitesimally rigid.
Block and hole graphs are derived from triangulated spheres by the removal of edges and the addition of minimally rigid subgraphs, known as blocks, in some of the resulting holes. Combinatorial characterisations of minimal $3$-rigidity are obtained for these graphs in the case of a single block and finitely many holes or a single hole and finitely many blocks. 
These results confirm a conjecture of Whiteley from 1988 and special cases of a stronger conjecture of Finbow-Singh and Whiteley from 2013. 
\end{abstract}

\maketitle



\section{Introduction}
A classical result of Cauchy \cite{cau} asserts that
a convex polyhedron in three-dimensional Euclidean space 
is continuously rigid, when viewed as a bar-joint framework, if and only if the faces are triangles. Dehn \cite{dehn} subsequently showed that this is also equivalent to the stronger condition of infinitesimal rigidity.
If the joints of such a framework are perturbed to  generic positions, with the bar lengths correspondingly adjusted, then infinitesimal rigidity may be established more directly by vertex splitting.  In this case convexity is not necessary and it follows that the graphs of triangulated spheres are \emph{$3$-rigid} in the sense that their generic placements in $\bR^3$ provide infinitesimally rigid bar-joint frameworks. This is a theorem of Gluck \cite{glu} and in fact these graphs are minimally $3$-rigid (isostatic) in view of their flexibility on the removal of any edge. The vertex splitting method was  introduced into geometric rigidity theory by Whiteley \cite{whi-vertex} and it plays a key role in our arguments.

While the general problem of characterising the rigidity or minimal rigidity of generic three-dimensional bar-joint frameworks remains open, an interesting  class of graphs which are derived from convex polyhedra has been considered in this regard by Whiteley \cite{whi-poly2}, Finbow-Singh, Ross and Whiteley \cite{fin-ros-whi} and Finbow-Singh and Whiteley \cite{fin-whi}. These graphs arise from surgery on a triangulated sphere involving the excision of the disjoint interiors of some essentially  disjoint triangulated discs and the insertion of minimally rigid blocks into some of the resulting holes. Even in the case of a single block and a single hole of the same perimeter length $n\geq 4$ the resulting block and hole graph need not be $3$-rigid. A necessary and sufficient condition for minimal rigidity for this  $n$-tower case, obtained in \cite{fin-whi}, requires that there exist $n$ internally disjoint edge paths connecting the vertices of one disc boundary to the vertices of the other boundary. 

\subsection{The main result}
In what follows we introduce some new methods which provide, in particular, characterisations of minimal $3$-rigidity for the class of block and hole graphs with a single block and finitely many holes. Such graphs may be viewed as the structure graphs of triangulated domes with windows, where the role of terra firma is played by the single block. 
In fact, the {\em girth inequalities},  defined in Sect.~\ref{Sect_GI},  provide a computable necessary and sufficient condition for $3$-rigidity in terms of lower bounds on the lengths of cycles of edges around sets of windows. 
 
\begin{center}
\begin{figure}[ht]
\centering
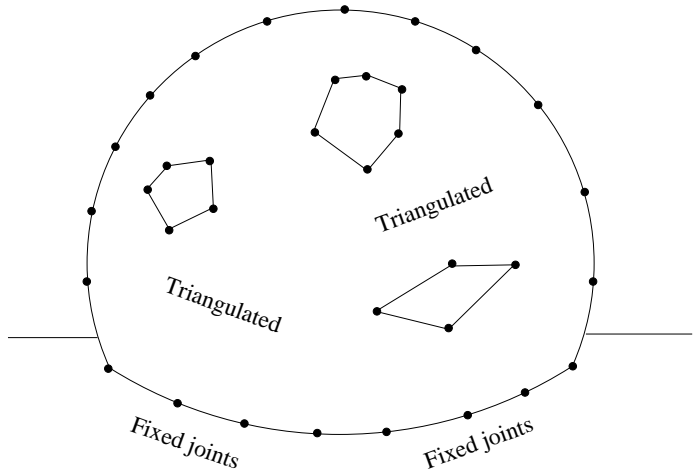
\caption{A triangulated dome with windows.}
\label{f:division}
\end{figure}
\end{center}

The main result is as follows. 

\begin{theorem}\label{t:oneblock}  
Let $\hat{G}$ be a block and hole graph with a single block and finitely many holes, or, a single hole and finitely many blocks. 
Then the following statements are equivalent.
\begin{enumerate}[(i)]
\item $\hat{G}$ is minimally $3$-rigid.
\item $\hat{G}$ is $(3,6)$-tight.
\item $\hat{G}$ is constructible from $K_3$ by the moves of vertex splitting and isostatic block substitution.
\item $\hat{G}$ satisfies the girth inequalities.
\end{enumerate}
\end{theorem}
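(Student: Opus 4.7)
The overall strategy is to close the cycle of implications $(\mathrm{iii}) \Rightarrow (\mathrm{i}) \Rightarrow (\mathrm{ii}) \Rightarrow (\mathrm{iv}) \Rightarrow (\mathrm{iii})$. The first three implications should be dispatched first, as they are largely expected. For $(\mathrm{iii}) \Rightarrow (\mathrm{i})$ one verifies that both construction moves preserve minimal $3$-rigidity: vertex splitting does so by Whiteley's classical theorem \cite{whi-vertex}, and isostatic block substitution does so because exchanging one isostatic subgraph on a prescribed boundary vertex set for another of the same type does not alter the generic infinitesimal rigidity of the ambient framework. The implication $(\mathrm{i}) \Rightarrow (\mathrm{ii})$ is the Maxwell count in dimension three: minimal rigidity forces the global equality $|E|=3|V|-6$ together with $(3,6)$-sparsity on every subgraph, since every subgraph of a minimally rigid graph is independent.

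To pass from $(\mathrm{ii})$ to the girth inequalities $(\mathrm{iv})$, I would apply $(3,6)$-sparsity to carefully chosen subgraphs determined by collections of windows. Given a set of holes and a cycle of edges separating them from the block, the subgraph induced by the complementary side (including the block when it lies there) has vertex and edge counts that can be read off from the hole perimeters, the block edge count, and the length of the separating cycle; imposing $(3,6)$-sparsity on this subgraph yields precisely the required lower bound on that cycle length. This step is essentially bookkeeping, driven by Euler's formula for the underlying triangulated sphere.

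The main obstacle is $(\mathrm{iv}) \Rightarrow (\mathrm{iii})$, which demands an inductive reduction scheme. I would proceed by induction on $|V(\hat{G})|$, the base case being $K_3$. The crux is to prove that any block and hole graph satisfying the girth inequalities and not equal to $K_3$ admits either an admissible inverse vertex split (removing a vertex of degree three or four in the triangulated-sphere part with suitable nonadjacency among its neighbours) or an inverse isostatic block substitution, and that the resulting smaller graph still satisfies the girth inequalities. There are two subtle points here: (a) \emph{existence} of a reduction, which I expect to be the hardest step, and will likely require an extremal or averaging argument over the cellular structure of the sphere with holes, combined with the girth hypothesis to rule out pathological degree distributions near holes or near the block; and (b) \emph{preservation}, which amounts to checking that any tightly met girth inequality in the reduced graph lifts to a tight girth inequality already present before the move, so induction applies. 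Controlling the adjacencies among neighbours of a candidate split vertex in the neighbourhood of a hole or block is the technical heart of point (a).

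Finally, the single-hole, many-blocks case can be handled either by a parallel induction adapting the same reduction scheme, or by exploiting $(3,6)$-tightness and a natural duality between holes and blocks that transports the single-block, many-holes argument to this setting.
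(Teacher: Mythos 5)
Your architecture coincides with the paper's: the same cycle of implications, with $(iii)\Rightarrow(i)$ from Whiteley's vertex-splitting theorem plus the block substitution principle (Lemma \ref{l:graphequivalentsB}), $(i)\Rightarrow(ii)$ from the Maxwell count, $(ii)\Rightarrow(iv)$ from sparsity counts applied to the two sides of a separating cycle (Proposition \ref{l:girth} is exactly this bookkeeping), induction on the number of vertices for the remaining implication, and block--hole transposition for the single-hole case. The genuine gap is at the point you yourself flag as hardest: the \emph{existence} of a reduction move. The mechanism you propose --- an averaging or extremal argument over degree distributions producing a degree-three or degree-four vertex admitting an inverse vertex split --- is not the right one and would not suffice on its own. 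The reduction is not a single chain of moves but a \emph{tree} (Corollary \ref{c:canconstruct}): its branching move is division of the face graph along a \emph{critical separating cycle} $c$, i.e.\ a proper cycle for which $Ext(c)$ is $(3,6)$-tight, producing two strictly smaller face graphs which must both be handled by the induction, the outer one afterwards serving as an isostatic block for the inner one.

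The concrete missing content is the dichotomy that drives the induction. First, Lemma \ref{l:criticalequivalence}: a contractible $TT$ edge whose contraction destroys $(3,6)$-tightness necessarily lies on a critical separating cycle; its proof needs the hole-filling Lemma \ref{HoleFilling} and a maximality argument identifying a maximal $(3,6)$-tight subgraph containing the block with $Ext(c)$ for some proper cycle $c$ --- this is where the single-block hypothesis does real work. Second, Proposition \ref{FaceProp}: if there are no $TT$ and no $BH$ edges at all, a count around the block boundary forces two of its vertices to share an $H$-labelled face and hence yields a non-facial critical separating cycle. Together with the fact that $BH$ contractions always stay in the class when there is one block (Lemma \ref{l:BHreduce3}), these show that no graph in the class is simultaneously terminal, indivisible and $BH$-reduced; note that this \emph{fails} for two or more blocks (Fig.~\ref{f:tnet}), so any argument must locate exactly where the single-block hypothesis enters. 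Finally, your preservation step (b) is stated too optimistically: a $TT$ or $BH$ contraction preserves the girth inequalities only when $G$ has no non-facial critical girth cycles (Lemma \ref{GILemmaB}), so the order of operations matters --- one must divide at critical cycles before contracting. Without this critical-cycle machinery the induction cannot certify that a reduction exists, and the proposal as written does not close.
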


Condition $(ii)$ is a well known necessary condition for minimally $3$-rigid graphs which requires the Maxwell count $|E|= 3|V|-6$ together with corresponding sparsity inequalities for subgraphs (see Sect.~\ref{Sect_BH}).
The construction scheme in $(iii)$ involves three phases of reduction for a $(3,6)$-tight block and hole graph, namely, 
\begin{enumerate}
\item
discrete homotopy reduction by $(3,6)$-tight preserving edge contractions, 
\item graph division over critical separating cycles of edges, and, 
\item admissible block-hole boundary contractions. 
\end{enumerate}
For the single block case, the equivalence of conditions $(i)-(iii)$ is established in Sect.~\ref{Sect_CD}.
The  girth inequalities (see Sect.~\ref{Sect_GI}) are a reformulation of the \emph{cut cycle inequalities} of \cite{fin-whi}. 
The same equivalences are obtained for the ``dual" class of block and hole graphs with a single hole in Theorem \ref{t:onehole}. In fact, the dual of \emph{any} generically isostatic block and hole graph is  generically isostatic (see \cite{fin-ros-whi}).

Theorem \ref{t:oneblock} confirms the single hole case and the single block case of Conjecture 5.1 in \cite{fin-whi} (see also Remark \ref{Rem_FSW} below). Example \ref{CounterEx} shows that the conjecture is not true in general. A further corollary of Theorem \ref{t:oneblock}  is that  the following conjectures, paraphrased from \cite{whi-poly2}, are true. 

\begin{conjecture}[{\cite[Conjectures 4.2 and 4.3]{whi-poly2}}]
\label{Conj_WW}
Let $\hat{G}$ be a block and hole graph with one pentagonal block 
and two quadrilateral holes, or, two quadrilateral blocks and one pentagonal hole. If $\hat{G}$ is $5$-connected then it is minimally $3$-rigid.
\end{conjecture}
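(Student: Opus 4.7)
The plan is to derive the conjecture as a direct corollary of Theorem \ref{t:oneblock}, since both cases in Conjecture \ref{Conj_WW} fall within its hypotheses: one pentagonal block with two quadrilateral holes is a single-block instance, and two quadrilateral blocks with one pentagonal hole is a single-hole instance. It therefore suffices to verify one of the equivalent conditions (i)--(iv) of Theorem \ref{t:oneblock}; I would use (iv), the girth inequalities, since these are what most naturally connect to a connectivity hypothesis. As an additional efficiency, the duality result cited after Theorem \ref{t:oneblock} (every isostatic block and hole graph has an isostatic dual) lets me reduce the single-hole case to the single-block case, so I would concentrate the argument on one pentagonal block together with two quadrilateral holes.

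First I would enumerate the girth inequalities that actually arise. With only two holes $h_1,h_2$ of perimeter $4$, the relevant edge cycles $C$ in $\hat{G}$ are those separating $\{h_1\}$, $\{h_2\}$, or $\{h_1,h_2\}$ from the pentagonal block on the underlying sphere. For singleton sets, the hole boundary itself realises the bound, so the content of the girth inequality is for the pair $\{h_1,h_2\}$; the numerical lower bound produced by Section~\ref{Sect_GI} should be the sum of perimeters minus a Maxwell correction, giving a bound of at most $5$ on cycles separating both holes from the block.

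Next I would translate $5$-connectedness into the required cycle lengths. A cycle of edges on the sphere that separates a nonempty set of windows from the remainder of $\hat{G}$ corresponds to a vertex cut of $\hat{G}$; by Menger's theorem, a $5$-connected $\hat{G}$ has no such separating cycle of length less than $5$. This delivers precisely the lower bound needed on the critical cycle and, combined with the trivial bounds on the singleton cycles, verifies all of the girth inequalities. Theorem \ref{t:oneblock} then yields minimal $3$-rigidity. The dual case is handled either by repeating this analysis with the roles of blocks and holes interchanged in the girth inequalities, or by invoking the isostatic duality of \cite{fin-ros-whi}.

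The main obstacle is arithmetic rather than structural: one has to confirm that for the specific perimeter pattern $(5;4,4)$, no girth inequality requires a cycle length strictly greater than $5$. This hinges on the precise form of the inequalities in Section~\ref{Sect_GI} and in particular on the Maxwell correction associated to a window set of two quadrilaterals with a pentagonal block in the complement. Once that bound is pinned down, $5$-connectivity is exactly the threshold required, and the corollary follows immediately.
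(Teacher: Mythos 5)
Your proposal matches the paper's own (very brief) justification: the paper derives Conjecture~\ref{Conj_WW} exactly by noting that $5$-connectedness forces the girth inequalities, whence Theorem~\ref{t:oneblock} (via Theorem~\ref{l:girth1block} and Corollary~\ref{t:onehole} for the dual case) gives minimal $3$-rigidity, and your index computation for the $(5;4,4)$ pattern correctly identifies $|c|\geq 5$ for cycles enclosing both quadrilateral holes (or the pentagonal block alone) as the binding constraint. The only point worth tightening is the passage from a separating cycle to a vertex cut, which requires checking that both sides of the cycle contain vertices off the cycle; this is immediate for the configurations in question, and your Menger argument then also disposes of the singleton-hole cycles (which need $|c|\geq 4$, not merely the boundary realising equality).
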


The Appendix provides a proof of the preservation of minimal $3$-rigidity under vertex splitting (established in \cite{whi-vertex}) and a simple proof of Gluck's theorem (\cite{glu}) on the $3$-rigidity of graphs of triangulated spheres.

\section{Block and hole graphs} 
\label{Sect_BH}
A cycle of edges in a simple graph is a sequence 
$e_1,e_2,\dots , e_r,$ with $r \geq 3$, for which
there exist vertices $v_1, v_2,\dots, v_r$, such that $e_i = v_iv_{i+1}$ for $i<r$ and $e_r=v_rv_1$.
A cycle of edges is \emph{proper} if its vertices are distinct.

\subsection{Face graphs}
Let $S=(V,E)$ be the graph of a triangulated sphere, that is, $S$ is a planar simple $3$-connected graph such that each face of $S$ is bounded by a  $3$-cycle. Let $c$ be a proper cycle in $S$ of length four or more. Then $c$ determines two complementary planar  subgraphs of $S$, each with a single non-triangular face bordered by the edges of $c$. Such a subgraph is referred to as a \emph{simplicial disc}  of $S$ with boundary cycle $c$.
The boundary cycle of a simplicial disc $D$ is also denoted by $\partial D$. 
The \emph{edge interior} of $D$ is the set of edges in $D$ that do not belong to $\partial D$. 
A collection of simplicial discs is \emph{internally-disjoint} if their respective edge interiors are pairwise disjoint. 

\begin{definition}
A {\em face graph}, $G$, is obtained from the graph of a triangulated sphere, $S$, by,
\begin{enumerate}
\item choosing a collection of internally disjoint simplicial discs in $S$, 
\item removing the edge interiors of each of these simplicial discs,
\item labelling the non-triangular faces of the resulting planar graph by either $B$ or $H$.
\end{enumerate}
\end{definition}

A labelling of the triangular faces of $G$ by the letter $T$ would be redundant but nevertheless an edge of $G$ is said to be
of type $BB, BH, HH, BT, HT$ or $TT$ according to the labelling of its adjacent faces.
A face graph is of type $(m,n)$ if the number of $B$-labelled faces is $m$ and the number of $H$-labelled faces is $n$.

\begin{example}
The complete graph $K_4$ is the graph of a triangulated sphere and may be expressed as the union of two simplicial discs with a common $4$-cycle boundary. The edge interiors of these simplicial discs each contain a single edge. Remove these edge interiors to obtain a $4$-cycle and label the two resulting faces by $B$ and $H$. This is the smallest example of a face graph of type $(1,1)$.
\end{example}

If  $\B$ and $\H$  are collections of simplicial discs of $S$ then the notation $G=S(\B,\H)$ indicates that the $B$-labelled faces of the face graph $G$ correspond to the simplicial discs in $\B$ and the $H$-labelled faces of $G$ correspond to the simplicial discs in $\H$.

\subsection{Block and hole graphs}
Let $G=S(\B,\H)$ be a face graph derived from $S$ and let $\B=\{B_1,B_2,\ldots,B_m\}$ be the simplicial discs in $S$ which determine the $B$-labelled faces of $G$.





\begin{definition}
A {\em block and hole graph} on $G=S(\B,\H)$ 
is a graph $\hat{G}$ of the form $\hat{G}=G\cup \hat{B}_1 \cup \cdots \cup \hat{B}_m$ where,
\begin{enumerate}
\item $\hat{B}_1,\hat{B}_2, \ldots, \hat{B}_m$ are minimally $3$-rigid graphs which are either pairwise disjoint, or, intersect at vertices and edges of $G$,
\item
$G\cap \hat{B}_i = \partial B_i$ for each $i=1,2,\ldots,m$.
\end{enumerate}
\end{definition}

As in \cite{fin-whi,fin-ros-whi}, we refer to the subgraphs $\hat{B_i}$  as the \emph{blocks} or \emph{isostatic blocks} of $\hat{G}$.
The following isostatic block substitution principle asserts that one may substitute isostatic blocks without altering the rigidity properties of $\hat{G}$. The proof is an application of \cite[Corollary 2.8]{whi-poly1}.


\begin{lemma}\label{l:graphequivalentsB}
Let $G=S(\B,\H)$ be a face graph and suppose there exists a block and hole graph on $G$ which is simple and minimally $3$-rigid. 
Then every simple block and hole graph on $G$ is minimally $3$-rigid. 
\end{lemma}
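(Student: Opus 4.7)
The plan is to reduce to the case of swapping a single isostatic block and then invoke Whiteley's isostatic substitution principle (\cite[Corollary 2.8]{whi-poly1}), as suggested by the paragraph preceding the lemma.

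Concretely, let $\hat{G}$ and $\hat{G}'$ be two simple block and hole graphs on $G = S(\B,\H)$, with $\hat{G}$ minimally $3$-rigid. Write
\[
\hat{G} = G \cup \hat{B}_1 \cup \cdots \cup \hat{B}_m,\qquad
\hat{G}' = G \cup \hat{B}_1' \cup \cdots \cup \hat{B}_m',
\]
where $\hat{B}_i$ and $\hat{B}_i'$ are isostatic blocks attached along the common boundary cycle $\partial B_i$ for each $i$. Interpolate through the sequence of graphs
\[
\hat{G}_k := G \cup \hat{B}_1' \cup \cdots \cup \hat{B}_k' \cup \hat{B}_{k+1} \cup \cdots \cup \hat{B}_m, \qquad k=0,1,\dots,m,
\]
so that $\hat{G}_0=\hat{G}$ and $\hat{G}_m=\hat{G}'$. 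It will suffice to show that if $\hat{G}_k$ is minimally $3$-rigid and $\hat{G}_{k+1}$ is simple, then $\hat{G}_{k+1}$ is also minimally $3$-rigid. Since the hypothesis of the lemma assumes simplicity of every block and hole graph on $G$ that we consider, this step-by-step strategy will cover all cases.

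For the single-block step, split $\hat{G}_k = K \cup \hat{B}_{k+1}$ and $\hat{G}_{k+1} = K \cup \hat{B}_{k+1}'$, where
\[
K := G \cup \bigcup_{i\leq k}\hat{B}_i' \cup \bigcup_{i>k+1}\hat{B}_i
\]
is the common part of the two graphs. By the definition of a block and hole graph, both $\hat{B}_{k+1}$ and $\hat{B}_{k+1}'$ are minimally $3$-rigid graphs and satisfy $K\cap \hat{B}_{k+1} = K\cap \hat{B}_{k+1}' = \partial B_{k+1}$. Thus $\hat{G}_k$ and $\hat{G}_{k+1}$ are both obtained from $K$ by gluing an isostatic block along the same boundary cycle, which is precisely the situation to which \cite[Corollary 2.8]{whi-poly1} applies, yielding minimal $3$-rigidity of $\hat{G}_{k+1}$.

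The main technical point to verify in executing this plan is that Whiteley's substitution principle genuinely applies with attachment along a shared cycle of edges rather than merely along a shared vertex set — the interior vertex counts of $\hat{B}_{k+1}$ and $\hat{B}_{k+1}'$ may well differ. This is handled by the rigidity-matroid reformulation underlying \cite[Corollary 2.8]{whi-poly1}: an isostatic block attached to $K$ along $\partial B_{k+1}$ contributes, from the point of view of the rest of the framework, only the $3$-rigid closure on $V(\partial B_{k+1})$, and this contribution is independent of which isostatic filling is chosen. Once this is checked, the intermediate simplicity assumption and the induction close the proof.
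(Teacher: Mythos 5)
Your overall strategy --- swapping one block at a time and invoking Whiteley's substitution principle \cite[Corollary 2.8]{whi-poly1} for each single swap --- is exactly what the paper intends: its proof is the one-line remark that the lemma is an application of that corollary, so your write-up is a faithful expansion of it, including the correct flag that the attachment is along a shared cycle of edges rather than merely a shared vertex set. There is, however, one genuine gap in the interpolation step. You claim that the simplicity of the intermediate graphs $\hat{G}_k$ is ``covered'' by the hypotheses, but the lemma only supplies simplicity of the two endpoints $\hat{G}_0=\hat{G}$ and $\hat{G}_m=\hat{G}'$; the mixed graphs in between are block and hole graphs on $G$ that you have no control over. The paper explicitly warns that simplicity can fail when two $B$-labelled faces share a pair of non-adjacent vertices $u,v$: if, say, $\hat{B}_1'$ and $\hat{B}_2$ both contain the chord $uv$ while $\hat{B}_1$ and $\hat{B}_2'$ do not, then $\hat{G}_1=G\cup\hat{B}_1'\cup\hat{B}_2\cup\cdots\cup\hat{B}_m$ acquires a doubled edge (equivalently, violates condition (1) of the definition of a block and hole graph), hence is not isostatic, and your induction cannot pass through it. As written, the step ``$\hat{G}_k$ minimally $3$-rigid $\Rightarrow$ $\hat{G}_{k+1}$ minimally $3$-rigid'' is therefore not available at every $k$.

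The repair is easy and should be made explicit: route the chain of substitutions through the discus and hole graph $G^\dagger$ of Example \ref{Ex2}. Each simplicial discus $B_i^\dagger$ has no edges joining two boundary vertices other than the edges of $\partial B_i$ itself (all its new edges meet a fresh pole), so every intermediate graph in the two chains $\hat{G}\to\cdots\to G^\dagger$ and $G^\dagger\to\cdots\to\hat{G}'$ is simple, and your single-swap argument then applies at every step. With that adjustment the proposal is correct and coincides with the paper's intended argument.
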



The graph of a triangulated sphere is minimally $3$-rigid (\cite{glu}) and so such graphs provide a natural choice for the isostatic blocks in a block and hole graph.  

\begin{example}
\label{Ex2}
Let $G=S(\B,\H)$ be a face graph and for each $B_i\in \B$ construct an isostatic block $B_i^\dagger$ with,
\[
V(B_i^\dagger) = V(\partial B_i) \cup\{x_i,y_i\},
\quad E(B_i^\dagger) =E(\partial B_i) \cup \{(v,x_i),(v,y_i):v \in  V(\partial B_i)\}
\]
The graph $B_i^\dagger$ is referred to as a {\em simplicial discus} with {\em poles} at $x_i$ and $y_i$. 
The resulting block and hole graph $G\cup B_1^\dagger \cup \cdots \cup B_m^\dagger$, denoted by $G^\dagger$, is  referred to as the {\em discus and hole graph} for $G$. Note that
$G^\dagger$ is  a simple graph which is uniquely determined by $G$.
The discus and hole graphs will be used in Sect.~\ref{Sect_CD} to establish a construction scheme for $(3,6)$-tight block and hole graphs with a single block. 
\end{example}

In general, a block and hole graph may not be simple. This can occur if two $B$-labelled faces of $G$ share a pair of non-adjacent vertices.

\begin{example}
\label{Ex1}
Let $G=S(\B,\H)$ be a face graph and for each $B_i\in \B$ construct an isostatic block $B_i^\circ$ as follows:
Define $B_i^\circ$ to be the graph of a triangulated sphere which is obtained from the boundary cycle $\partial B_i$ by adjoining $2(|\partial B_i|-3)$ edges so that $B_i^{\circ}$ is the union of two internally-disjoint simplicial discs with common boundary cycle $\partial B_i$. The resulting block and hole graph $G\cup B_1^{\circ} \cup \cdots \cup B_m^{\circ}$ will be denoted $G^{\circ}$.
Note that $G^{\circ}$ is not uniquely determined and may not be simple. However, $G^{\circ}$ has the convenient property that its vertex set is that of $G$. This construction will be applied in Sect.~\ref{Sect_GI} to characterise isostatic block and hole graphs in terms of girth inequalities. 
\end{example}


There is a simple relationship between a face graph $G$ and its associated block and hole graphs. It is convenient therefore to focus the reduction analysis at the level of face graphs.
This perspective also underlines a duality principle of the theory under $B$, $H$ transposition, a feature exposed in \cite{fin-ros-whi} and discussed in Sect. \ref{BHTrans}.


\subsection{Freedom numbers}
Let $f(J)$  denote the \emph{freedom number} $3|V(J)|-|E(J)|$ of a graph $J$. A simple graph $J$ {\em satisfies the Maxwell count} if 
$f(J)=6$. 

\begin{lemma}
\label{MaxSubs}
Let $G$, $K$ and $K'$ be graphs with the following properties,
\begin{enumerate}[(i)]
\item $K$ and $K'$ both satisfy the Maxwell count, and,
\item $G\cap K = G\cap K'$.
\end{enumerate}
If $G\cup K$ satisfies the Maxwell count 
then $G\cup K'$ satisfies the Maxwell count. 
\end{lemma}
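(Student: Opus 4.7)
The plan is to reduce everything to a single inclusion–exclusion identity for the freedom number. For any pair of graphs $A$ and $B$, the equalities $|V(A\cup B)|=|V(A)|+|V(B)|-|V(A\cap B)|$ and $|E(A\cup B)|=|E(A)|+|E(B)|-|E(A\cap B)|$ yield directly
\[
f(A\cup B)=f(A)+f(B)-f(A\cap B).
\]
I would record this as the main computational tool at the outset.

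Applying this identity with $A=G$ and $B=K$, and using hypothesis (i) in the form $f(K)=6$, gives $f(G\cup K)=f(G)+6-f(G\cap K)$. Since by assumption $f(G\cup K)=6$, we conclude $f(G)=f(G\cap K)$.

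Now apply the same identity with $A=G$ and $B=K'$: using $f(K')=6$ from (i) and $f(G\cap K')=f(G\cap K)$ from (ii), we obtain
\[
f(G\cup K')=f(G)+6-f(G\cap K')=f(G)+6-f(G\cap K)=6,
\]
which is the Maxwell count for $G\cup K'$. There is essentially no obstacle here — the only thing to watch is that the inclusion–exclusion identity for $f$ applies to arbitrary (not necessarily induced) subgraphs, which is immediate from the definition $f(J)=3|V(J)|-|E(J)|$ provided $G\cap K$ and $G\cap K'$ are interpreted consistently as the graph with vertex set $V(G)\cap V(K)$ and edge set $E(G)\cap E(K)$ (and similarly for $K'$); hypothesis (ii) then says these two intersection graphs are literally equal, so in particular have the same vertex and edge counts. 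This makes the substitution of $f(G\cap K')$ for $f(G\cap K)$ legitimate and closes the argument.
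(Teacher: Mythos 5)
Your proof is correct and is essentially the paper's own argument: both rest on the inclusion--exclusion identity $f(A\cup B)=f(A)+f(B)-f(A\cap B)$ together with $f(K)=f(K')=6$ and $G\cap K=G\cap K'$. The paper simply chains the substitutions in one display rather than first isolating $f(G)=f(G\cap K)$, but the content is identical.
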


\proof 
The result follows on considering the freedom numbers,
\[f(G\cup K')=f(G)+f(K')-f(G\cap K')
=f(G)+f(K)-f(G\cap K) = f(G\cup K)=6.\]
\endproof

A simple graph $G$ is said to be  {\em $(3,6)$-sparse} if $f(J)\geq 6$ for any subgraph $J$ containing at least two edges. The graph $G$ is {\em $(3,6)$-tight} if it is $(3,6)$-sparse and satisfies the Maxwell count.


\begin{lemma}
\label{CombSubs}
Let $G$, $K$ and $K'$ be simple graphs with the following properties,
\begin{enumerate}[(i)]
\item $K$ and $K'$ are both $(3,6)$-tight,
\item $G\cap K = G\cap K'$,
\item if $v,w\in V(G\cap K')$ and $vw\in E(K')$ then $vw\in E(G)$.
\end{enumerate}
If $G\cup K$ is $(3,6)$-sparse (respectively, $(3,6)$-tight) then $G\cup K'$ is $(3,6)$-sparse (respectively, $(3,6)$-tight). 
\end{lemma}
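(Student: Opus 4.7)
The plan is to reduce the tight case to the sparse case via Lemma \ref{MaxSubs}: if $G\cup K$ is $(3,6)$-tight and I have already established $(3,6)$-sparsity of $G\cup K'$, then Lemma \ref{MaxSubs} (applied with the same $K,K'$, both of which satisfy the Maxwell count) forces the Maxwell equality for $G\cup K'$, and tightness follows. Thus the entire content lies in the sparsity half.

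To verify sparsity of $G\cup K'$, I would fix a subgraph $J\subseteq G\cup K'$ with $|E(J)|\geq 2$ and decompose $J=J_G\cup J_{K'}$, where $J_G=J\cap G$ and $J_{K'}=J\cap K'$. The crucial transfer step uses only condition (ii): since $J_G\subseteq G$ and $G\cap K=G\cap K'$ as graphs, an elementary check of vertex sets and edge sets gives $J_G\cap K=J_G\cap K'$, and the freedom-number identity $f(A\cup B)=f(A)+f(B)-f(A\cap B)$ then yields $f(J_G\cup K)=f(J_G\cup K')$. Since $J_G\cup K\subseteq G\cup K$ contains $K$, which is $(3,6)$-tight on at least three vertices and hence carries at least three edges, $(3,6)$-sparsity of $G\cup K$ delivers $f(J_G\cup K')\geq 6$.

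I would then split on $|E(J_{K'})|$. When $|E(J_{K'})|\geq 2$, sparsity of $K'$ gives $f(J_{K'})\geq 6$; combining the inclusion--exclusion $f(J)=f(J_G)+f(J_{K'})-f(J_G\cap K')$ with the rearrangement $f(J_G)-f(J_G\cap K')=f(J_G\cup K')-f(K')=f(J_G\cup K')-6$ produces
\[ f(J)=f(J_G\cup K')-6+f(J_{K'})\geq 6. \]
When $|E(J_{K'})|\leq 1$, I distinguish two sub-situations. If $J_{K'}$ has no edges, or if its unique edge has both endpoints in $V(G\cap K')$---in which case hypothesis (iii) places the edge in $E(G)$ and thence in $E(J_G)$---then $J$ and $J_G$ share the same edge set, so $(3,6)$-sparsity of $G\cup K$ applied to $J_G\subseteq G\cup K$ yields $f(J)\geq f(J_G)\geq 6$. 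Otherwise the unique $J_{K'}$-edge introduces a vertex outside $V(G)$ into $V(J)\setminus V(J_G)$, and a short direct freedom count---separating $|E(J_G)|\geq 2$ (where $f(J)\geq f(J_G)+2\geq 8$) from $|E(J_G)|=1$ (so $|V(J)|\geq 3$, $|E(J)|=2$, $f(J)\geq 7$)---closes the argument.

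The single genuine obstacle is this last bullet: the clean inclusion--exclusion of the bulk case breaks down when $J_{K'}$ is too thin to be sparsity-heavy (fewer than two edges), and one must invoke hypothesis (iii) to ensure that any stray $K'$-edge in $J$ is either already captured by $G$ or is accompanied by a fresh vertex from $V(K')\setminus V(G)$, thereby compensating the $-1$ contribution to the freedom number.
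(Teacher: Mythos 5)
Your proposal is correct and follows essentially the same route as the paper: decompose a subgraph $J\subseteq G\cup K'$ into its $G$-part and $K'$-part, use inclusion--exclusion of freedom numbers together with $f(K)=f(K')=6$ and $G\cap K=G\cap K'$ to transfer the sparsity bound from $G\cup K$ when the $K'$-part has at least two edges, invoke hypothesis (iii) to supply a fresh vertex when the $K'$-part has a single stray edge, and settle tightness via Lemma \ref{MaxSubs}. Your handling of the thin case is slightly more explicit than the paper's (which assumes $J$ connected and gives the bound $f(J)\geq 5+2=7$), but the two arguments are algebraically the same.
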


\proof 
Suppose that $G\cup K$ is $(3,6)$-sparse and let $J$ be a subgraph of $G\cup K'$ which contains at least two edges. It is sufficient to consider the case where $J$ is connected. 
If $J$ is a subgraph of $G$ then $f(J)\geq 6$ since $G\cup K$ is $(3,6)$-sparse. If $J$ is not a subgraph of $G$ then there are two possible cases.

{\em Case 1)} Suppose that $J\cap K'$ contains exactly one edge $vw$ and that this edge is not in $G$. Then, by condition $(iii)$, either $v\notin V(G)$ or $w\notin V(G)$.
It follows that, 
\[f(J) = f(J\cap G) + (f(J\cap K') - f(J\cap (G\cap K'))) \geq 5+2=7.\]

{\em Case 2)} 
Suppose  that $J\cap K'$ contains two or more edges.  
Since $K$ satisfies the Maxwell count, 
$f(J\cap K')\geq 6=f(K)$ and, since $G\cup K$ is $(3,6)$-sparse, 
\begin{eqnarray*}
f(J) &=& f(J\cap G) + f(J\cap K') - f(J\cap (G\cap K')) \\
&\geq& f(J\cap G) + f(K) - f(J\cap (G\cap K)) \\
&=& f((J\cap G)\cup K) 
\geq 6.
\end{eqnarray*}
In each case, $f(J)\geq 6$ and so $G\cup K'$ is $(3,6)$-sparse.
If $G\cup K$ is $(3,6)$-tight then by the above argument, and Lemma
\ref{MaxSubs}, $G\cup K'$ is also $(3,6)$-tight.
\endproof

It is well-known that minimally $3$-rigid graphs, and hence the isostatic blocks of a block and hole graph, are necessarily $(3,6)$-tight (see for example \cite{gra-ser-ser}). 
The following corollary refers to the discus and hole graph described in Example \ref{Ex2}.

\begin{cor}\label{l:graphequivalentsA}
Let $G=S(\B,\H)$ be a face graph of type $(m,n)$.
\begin{enumerate}[(i)]
\item Suppose there exists a block and hole graph on $G$ which satisfies the Maxwell count. 
Then every block and hole graph on $G$ satisfies the Maxwell count. 

\item Suppose there exists a block and hole graph on $G$ which is simple and $(3,6)$-sparse (respectively, simple and $(3,6)$-tight). 
Then the discus and hole graph $G^\dagger$ is $(3,6)$-sparse
(respectively, $(3,6)$-tight). 
\end{enumerate}
\end{cor}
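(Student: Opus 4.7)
The plan is to prove both statements by replacing blocks one at a time and appealing to Lemmas \ref{MaxSubs} and \ref{CombSubs} respectively. Let $\hat{G} = G \cup \hat{B}_1 \cup \cdots \cup \hat{B}_m$ be the block and hole graph furnished by the hypothesis, indexed so that $\hat{B}_i$ is the block inserted into the $B$-face with boundary $\partial B_i$. For part (i) let $\hat{B}_i'$ be the corresponding isostatic block in any second block and hole graph on $G$; for part (ii) let $\hat{B}_i' = B_i^\dagger$ be the simplicial discus. In both cases form a sequence $\hat{G}^{(0)} = \hat{G}, \hat{G}^{(1)}, \ldots, \hat{G}^{(m)}$ in which $\hat{G}^{(i)}$ is obtained from $\hat{G}^{(i-1)}$ by exchanging $\hat{B}_i$ for $\hat{B}_i'$.

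At step $i$, set $K = \hat{B}_i$ and $K' = \hat{B}_i'$, and let $G_i$ denote the remainder of $\hat{G}^{(i-1)}$, namely $G$ together with $\hat{B}_1', \ldots, \hat{B}_{i-1}'$ and $\hat{B}_{i+1}, \ldots, \hat{B}_m$. Both $K$ and $K'$ are minimally $3$-rigid, hence satisfy $f(K) = f(K') = 6$ and are $(3,6)$-tight. The internal disjointness of the simplicial discs in $\B$, together with the block intersection rule, gives $G_i \cap K = \partial B_i = G_i \cap K'$. For part (i), Lemma \ref{MaxSubs} then transfers the Maxwell count from $\hat{G}^{(i-1)} = G_i \cup K$ to $\hat{G}^{(i)} = G_i \cup K'$, and iterating yields the Maxwell count for any target block and hole graph on $G$.

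For part (ii) apply Lemma \ref{CombSubs} instead. Condition (iii) of that lemma reduces to the observation that the only edges of the discus $B_i^\dagger$ whose endpoints both lie in $V(\partial B_i)$ are the boundary edges themselves, and these are contained in $E(G) \subseteq E(G_i)$; the remaining edges of $B_i^\dagger$ are incident to one of the poles $x_i, y_i$. Simplicity of each intermediate graph is maintained because the poles introduced at step $i$ are fresh vertices disjoint from all previous vertex sets, so no parallel edges or loops arise. Inductively each $\hat{G}^{(i)}$ is simple and $(3,6)$-sparse (respectively $(3,6)$-tight), and after $m$ steps the terminal graph $\hat{G}^{(m)}$ coincides with $G^\dagger$.

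The only real obstacle is verifying the intersection identity $G_i \cap K = G_i \cap K'$ at each step. This rests on the fact that both $K$ and $K'$ meet $G$ precisely in $\partial B_i$, and that intersections with the other blocks present in $G_i$ are, by the pairwise-intersection clause of the block and hole graph definition, confined to vertices and edges of $G$ and hence to $\partial B_i$; this bookkeeping is routine once the blocks are assumed to share only their prescribed boundary with the surrounding structure.
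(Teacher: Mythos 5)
Your proposal is correct and follows essentially the same route as the paper: substitute the blocks one at a time, invoking Lemma \ref{MaxSubs} for part (i) and Lemma \ref{CombSubs} for part (ii), with condition (iii) of the latter verified by observing that the only edges of the discus $B_i^\dagger$ joining two boundary vertices are the edges of $\partial B_i$ itself. The extra bookkeeping you supply on the intersection identities and on simplicity of the intermediate graphs is sound and simply makes explicit what the paper leaves implicit.
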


\proof
The statements follow by applying Lemmas \ref{MaxSubs} and \ref{CombSubs} respectively with $K$ and $K'$ representing two different choices of isostatic block for a given $B$-labelled face of $G$.
Note that in the case of $(ii)$, if $B_i\in \B$ then 
there are no edges $vw$ of the simplicial discus $B_i^\dagger$ with $v,w\in \partial B_i$ other than the edges of the boundary cycle $\partial B_i$.
Thus condition $(iii)$ of Lemma \ref{CombSubs} is satisfied. 
\endproof

\subsection{$3$-connectedness}
Recall that a graph is $3$-connected if there exists no pair of vertices $\{x, y\}$ with the property that there are two other vertices which cannot be connected by an edge path avoiding $x$ and $y$. Such a pair is referred to here as a separation pair.
The block and hole graphs  $\hat{G}$ which are derived from face graphs $G$ need not be $3$-connected.
However, it is shown below that in the single block case $3$-connectedness is a consequence of $(3,6)$-tightness.    

\begin{lemma}
\label{3Connected}
Every $(3,6)$-tight block and hole graph with a single block is $3$-connected.
\end{lemma}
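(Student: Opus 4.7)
The plan is to argue by contradiction. Suppose $\{x,y\}$ is a separation pair in $\hat{G}$ and write $\hat{G}=G_1\cup G_2$ with $V(G_1)\cap V(G_2)=\{x,y\}$ and no edge of $\hat{G}$ between $V(G_1)\setminus\{x,y\}$ and $V(G_2)\setminus\{x,y\}$.

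The first step is a freedom count. Writing $f(\hat{G})=f(G_1)+f(G_2)-f(G_1\cap G_2)$ with $G_1\cap G_2$ having vertex set $\{x,y\}$ and at most one edge, I apply $(3,6)$-sparsity to each side. Since $\hat{G}$ is $(3,6)$-tight and therefore has minimum degree at least $3$, each $G_i$ contains a vertex of degree $\geq 3$ whose neighbours all lie in $V(G_i)$, so $|E(G_i)|\geq 3$ and $f(G_i)\geq 6$. The case $xy\in E(\hat{G})$ would force $f(G_1)+f(G_2)=11$, contradicting $f(G_i)\geq 6$; hence $xy\notin E(\hat{G})$ and $f(G_1)=f(G_2)=6$. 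A parallel argument rules out a cut vertex (the analogous identity $f(H_1)+f(H_2)=9$ conflicts with both sides having $f\geq 6$), so $\hat{G}$ is already $2$-connected and each of $x,y$ has a neighbour in $V(G_1)\setminus\{x,y\}$.

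Next I locate the block. A minimally $3$-rigid graph on at least four vertices is $3$-connected---any $2$-separator would produce an infinitesimal rotation about the line through it---so $\hat{B}-\{x,y\}$ is connected and I may assume $\hat{B}\subseteq G_1$. Since $xy\notin E(\hat{G})$, no edge of $\hat{B}$ has both endpoints in $V(G_2)$, so $G_2\subseteq G$ and $V(G_2)\cap V(\partial B)\subseteq\{x,y\}$. Since $G_2$ is planar (as a subgraph of $S$), the extremal bound $|E(G_2)|\leq 3|V(G_2)|-6$ combined with $f(G_2)=6$ forces equality, making $G_2$ a triangulated sphere with a unique planar embedding (Whitney). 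If $V(G_2)=V(G)$ then the same bound applied to $G$ forces $G=G_2$, making $G$ a triangulation and contradicting the presence of the non-triangular face $\partial B$; hence $V(G_2)\subsetneq V(G)$.

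For the final contradiction I exploit planarity of $G$ in the spherical embedding inherited from $S$. Because edges of $G$ cannot cross edges of $G_2$, every vertex of $V(\partial B)\setminus\{x,y\}$ lies in the interior of some triangular face of $G_2$. Using the connectedness of $\partial B\setminus\{x,y\}$ together with the observation that two triangular faces of $G_2$ sharing two corners must share the edge between them (which cannot be $xy$), the entire set $V(\partial B)\setminus\{x,y\}$ sits inside a single triangular face $F$ of $G_2$. Now by $2$-connectedness each of $x,y$ has a neighbour $v\in V(G_1)\setminus\{x,y\}$; a case analysis (with $v$ either an interior vertex of $\hat{B}$, forcing that separating vertex into $V(\partial B)$ and providing it with a $\partial B$-neighbour sitting in $F$, or $v$ itself a vertex of $V(G)\setminus V(G_2)$ sitting in $F$) shows in every case that the separating vertex must be a corner of $F$. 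Thus both $x$ and $y$ are corners of the triangular face $F$, so $xy$ is a boundary edge of $F$, and therefore $xy\in E(G_2)\subseteq E(\hat{G})$, contradicting $xy\notin E(\hat{G})$. The main obstacle is this last planar-topological step: trapping $V(\partial B)\setminus\{x,y\}$ in one triangular face of $G_2$ and combining this with $2$-connectedness to force both separating vertices onto that face.
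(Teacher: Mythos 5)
Your overall strategy is the paper's own: take a $2$-separation $\{x,y\}$, use the identity $f(G_1)+f(G_2)=f(\hat{G})+f(G_1\cap G_2)$ together with $(3,6)$-sparsity of both sides to force $f(G_1)=f(G_2)=6$, observe that the side not containing the block is a planar $(3,6)$-tight graph and hence a triangulated sphere, and obtain a contradiction from the edge $xy$. The paper finishes in one line from that point: the rest of the graph is attached to the triangulated sphere $K_1'$ only at $x$ and $y$, hence sits in a single (triangular) face of $K_1'$, so $xy\in E(K_1')$, and then $K_1\cup\{xy\}$ has freedom number $5$, violating sparsity. You instead rule out $xy\in E(\hat{G})$ at the outset and then labour to show $xy\in E(G_2)$; this is the same fact approached from the other side, so the route is essentially identical, only longer.

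There is one step you assert without justification: in the case where the neighbour $v$ of a separating vertex lies in $V(G)\setminus V(G_2)$, you claim $v$ ``sits in $F$'', the same face of $G_2$ that traps $V(\partial B)\setminus\{x,y\}$. A priori $v$ could lie in a different triangular face, which would only make the separating vertex a corner of that other face, and the argument stalls. What is needed is that all of $V(G)\setminus V(G_2)$ lies in a single face of $G_2$. This does hold, but requires an argument: since $G_2$ is a triangulation it is $3$-connected, so $\hat{G}-\{x,y\}$ has exactly two components and $G_1-\{x,y\}$ is connected; now if $D$ were a component of the subgraph of $G$ induced on $V(G)\setminus V(G_2)$ missing $V(\partial B)$, then no edge of $\hat{G}$ could leave $D$ except to $\{x,y\}$ (an edge to $V(G_2)\setminus\{x,y\}$ crosses between components of $\hat{G}-\{x,y\}$, and a block edge out of $D$ would force a vertex of $D$ into $V(\hat{B})\cap V(G)=V(\partial B)$), so $D$ would disconnect $G_1-\{x,y\}$, a contradiction. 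Hence every component of $G$ on $V(G)\setminus V(G_2)$ meets $\partial B$, and since $\partial B\setminus\{x,y\}$ is connected in the remaining case (at most one of $x,y$ on $\partial B$) and lies in $F$, everything lies in $F$. With this inserted your proof is complete; note that the paper's version of the endgame (contradicting sparsity of $K_1\cup\{xy\}$) lets one skip your separate preliminary deduction that $xy\notin E(\hat{G})$.
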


\begin{proof}
Let $\hat{G}$ be a $(3,6)$-tight block and hole graph with a single block and suppose that  $\hat{G}$ is not $3$-connected. Then there exists a separation pair $\{x, y\}$ with edge-connected components $K_1, K_2, \dots, K_r$. Let $K_1$ be the component which contains an edge of $\hat{B_1}$ and hence all of  $\hat{B_1}$. The graph
$K_1$ and its complementary graph $K_1'$ with $E(K')=E(\hat{G})\backslash E(K)$ each have more than one edge
and their intersection is $\{x, y\}$. Thus $f(K_1\cap K_1')=6$ and  
\[
f(K_1)+f(K_1')=f(\hat{G})+f(K_1\cap K_1')=12
\]
It follows that the $(3,6)$-sparse graphs  $K_1$ and $K_1'$ are both $(3,6)$-tight. In particular, $K_1'$ must be the graph of a triangulated sphere and it follows that $K_1'$ contains the edge $xy$. Now $K_1\cup \{xy\}$ is a subgraph of $\hat{G}$ which fails the $(3,6)$-sparsity count, which is a contradiction. 
\end{proof}



\begin{remark}
\label{Rem_FSW}
The definition of a block and hole graph $\hat{G}$ is somewhat more liberal than the block and hole graphs $\hat{\P}$ of Finbow-Singh and Whiteley \cite{fin-whi}. 
A graph $\hat{\P}$ is defined by considering a planar $3$-connected graph $\P$ whose faces are labelled with the letters $B, H$ and $D$. The $B$-labelled faces are replaced with isostatic block graphs and the $D$-labelled faces are triangulated. The resulting graph $\hat{\P}$ is called a \emph{base polyhedron} reflecting the fact that it is the starting point for an ``expanded" graph $\hat{\P}^E$. This is obtained by a  further triangulation process involving adding vertices on edges of $DD$ type, and vertices interior to triangles. In particular  $\hat{\P}$ and $\hat{\P}^E$  are also $3$-connected. 
\end{remark}

\section{Edge contraction and cycle division}
\label{Sect_CD}
For $m,n$ nonnegative integers let $\G(m,n)$ be the set of all face graphs of type $(m,n)$ for which the discus and hole graph $G^\dagger$ is $(3,6)$-tight. In particular, the graphs of $\G(0,0)$
are triangulations of a triangle and the sets $\G(0,n)$ and $\G(m,0)$
are empty for $n,m \geq 1$.

\subsection{$TT$ edge contractions}
The first reduction move for block and hole graphs is based on an edge contraction move for face graphs.
A $TT$ edge in a face graph $G$ is said to be \emph{contractible} if it belongs to two triangular faces and to no other $3$-cycle of $G$. In this case  the deletion of the edge and the  identification of its vertices  determines a graph move $G \to G'$ on the class of face graphs, called a {\em TT edge contraction}, which preserves the boundary cycles of the labelled faces of $G$. 


\begin{definition}
A \emph{terminal face graph} $G$ in  $\G(m,n)$ is one for which there exist no $TT$ edge contractions $G \to G'$ with $G' \in \G(m,n)$. 
\end{definition}

\begin{example}
A cycle graph with length at least $4$, with exterior face labelled $B$ and interior face labelled $H$ is evidently a terminal graph in $\G(1,1)$. 
\end{example}

\begin{example}
Fig.~\ref{f:5holebase} shows a face graph $G$ with a contractible $TT$ edge which is nevertheless a terminal face graph of $\G(1,5)$. The discus and hole graph for the contracted graph $G'$ 
fails to be $(3,6)$-tight since there is an extra edge added to
the simplicial discus $B^\dagger$. Each block and hole graph $\hat{G}$  is evidently reducible by inverse Henneberg moves to a single block (i.e. by successively removing degree $3$ vertices, see for example \cite{gra-ser-ser}). However, there is a systematic method of reduction described below in which each move is a form of edge contraction or cycle division.
\end{example}

\begin{center}
\begin{figure}[ht]
\centering
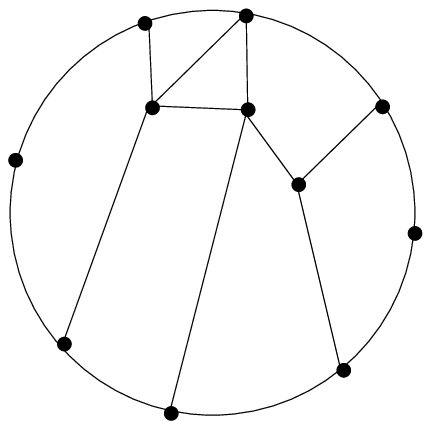
\caption{A terminal face graph in  $\G(1,5)$.}
\label{f:5holebase}
\end{figure}
\end{center}

\begin{example}
The $6$-vertex graph of Fig.~\ref{f:DBgraph} is a terminal face graph in $\G(2,2)$ whose block and hole graphs (variants of the double banana graph) are not $3$-rigid. The graph $G^\circ$ (see Ex.~\ref{Ex1}), which in this case is unique, is not a simple graph.
\end{example}

\begin{center}
\begin{figure}[ht]
\centering
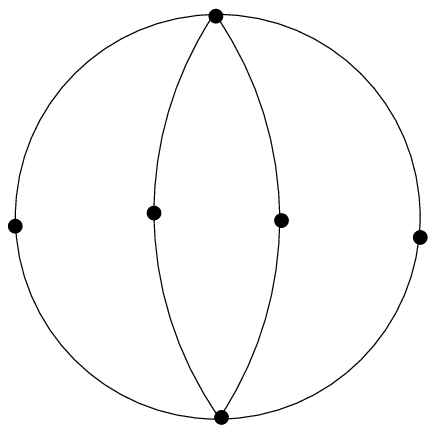
\caption{A terminal face graph in $\G(2,2)$.}
\label{f:DBgraph}
\end{figure}
\end{center}

\begin{remark}
The contraction of a $TT$ edge in a graph which is both $(3,6)$-tight and $3$-connected  may remove either one of these properties while maintaining the other. However, for a block and hole graph with a single block the situation is more straightforward since, by Lemma 
\ref{3Connected}, $3$-connectedness is a consequence of $(3,6)$-tightness. In particular, if $G$ is a terminal face graph in $\G(1,n)$, for some $n\geq 1$,  then the discus and hole graph $G^\dagger$ is both $(3,6)$-tight and $3$-connected.  
\end{remark}

\subsection{Critical separating cycles}
Let $c$ be a proper cycle of edges in a face graph $G$ and fix a planar realisation of $G$. Then $c$ determines two new face graphs $G_1$ and $G_2$ which consist of the edges of $c$ together with the edges and labelled faces of $G$ which lie outside 
(resp. inside) $c$.  
If $c$ is not a $3$-cycle then the face in $G_1$ (and in $G_2$) which is bounded by $c$ is assigned the label $H$. 
The discus and hole graph for $G_1$ (resp. $G_2$) will be denoted  $Ext(c)$ (resp. $Int(c)$).
Note that $G^\dagger = Ext(c)\cup Int(c)$ and $Ext(c)\cap Int(c) = c$.

\begin{definition}\label{d:critcycle}
A {\em critical separating cycle} for a face graph $G$ 
is a proper cycle $c$ with the property that either $Ext(c)$ or $Int(c)$ is $(3,6)$-tight.
\end{definition}

The boundary of a $B$-labelled face is always a critical separating cycle. Moreover, if $G^\dagger$ is $(3,6)$-tight then the boundary of every face of $G$ is a critical separating cycle. 

\begin{lemma}
\label{CritSep3Cycle}
Let $G$ be a face graph in $\G(m,n)$. 
If $c$ is a $3$-cycle in $G$ then $c$ is a critical separating cycle for $G$ and both $Ext(c)$ and $Int(c)$ are $(3,6)$-tight.
\end{lemma}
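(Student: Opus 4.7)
The plan is to exploit the decomposition $G^\dagger = Ext(c) \cup Int(c)$ with $Ext(c) \cap Int(c) = c$, which is recorded in the paragraph preceding Definition \ref{d:critcycle}, combined with the inclusion-exclusion identity for the freedom number $f$. Since $c$ is a $3$-cycle, $f(c) = 3\cdot 3 - 3 = 6$, and since $G \in \G(m,n)$ the graph $G^\dagger$ is $(3,6)$-tight, so $f(G^\dagger) = 6$. The first step is then to write
\[
f(Ext(c)) + f(Int(c)) = f(G^\dagger) + f(c) = 12.
\]

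The second step is to invoke the $(3,6)$-sparsity of $G^\dagger$, which is inherited by every subgraph containing at least two edges. Both $Ext(c)$ and $Int(c)$ contain the three edges of $c$, so each satisfies $f \geq 6$. Combined with the displayed equation, this forces $f(Ext(c)) = f(Int(c)) = 6$, so each satisfies the Maxwell count. Inherited sparsity from $G^\dagger$ then upgrades both to $(3,6)$-tight, and in particular $c$ is a critical separating cycle in the sense of Definition \ref{d:critcycle}.

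The whole argument is essentially a one-line freedom-number computation, so I do not anticipate a serious obstacle. The only point worth verifying carefully is that the intersection really is exactly $c$ and that the decomposition is valid: this uses the fact that $c$ has length three while every $B$-labelled face of $G$ has boundary cycle of length at least four, so each simplicial discus $B_i^\dagger$ built into $G^\dagger$ sits cleanly on one side of $c$ and cannot straddle it. Once this is noted, the inclusion-exclusion identity for $f$ applies verbatim and the conclusion follows immediately.
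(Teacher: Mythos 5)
Your proposal is correct and is essentially identical to the paper's own proof: the same inclusion--exclusion identity $f(G^\dagger)=f(Ext(c))+f(Int(c))-f(c)$ with $f(c)=f(G^\dagger)=6$, combined with inherited $(3,6)$-sparsity forcing $f(Ext(c))=f(Int(c))=6$. The extra check that the decomposition $G^\dagger=Ext(c)\cup Int(c)$ with intersection exactly $c$ is valid (because labelled faces have boundary length at least four and so cannot straddle a $3$-cycle) is a sensible verification of a fact the paper records just before Definition~\ref{d:critcycle}.
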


\proof
Since $G^\dagger$ is $(3,6)$-sparse, both $Ext(c)$ and $Int(c)$ are $(3,6)$-sparse. Note that $f(G^\dagger) = f(c) = 6$,
$f(Ext(c))\geq 6$ and $f(Int(c))\geq 6$. 
Thus applying the formula,
\[f(G^\dagger) = f(Ext(c)) + f(Int(c)) - f(c),\] 
it follows that both $Ext(c)$ and $Int(c)$ are $(3,6)$-tight.
\endproof




For face graphs of type $(1,n)$ a planar depiction may be chosen for which the unbounded face is $B$-labelled. Thus for any proper cycle, it may be assumed that $Ext(c)$ contains the isostatic block and $Int(c)$ is a subgraph of a triangulated sphere.

\begin{lemma}
\label{CritSep2}
Let $G$ be a face graph in $\G(1,n)$.
Then  a proper cycle $c$ is a critical separating cycle for $G$
if and only if $Ext(c)$ is $(3,6)$-tight.
\end{lemma}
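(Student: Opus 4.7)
The implication $(\Leftarrow)$ is immediate from Definition~\ref{d:critcycle}. For $(\Rightarrow)$, suppose $c$ is a critical separating cycle; if $Ext(c)$ is already $(3,6)$-tight we are done, so assume instead that $Int(c)$ is $(3,6)$-tight. My plan is to show this forces $|c|=3$, after which Lemma~\ref{CritSep3Cycle} immediately delivers tightness of $Ext(c)$.

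Since $G\in\G(1,n)$, I would fix a planar embedding in which the unique $B$-labelled face is the unbounded face. Then for every proper cycle $c$ the sole block lies exterior to $c$; in particular the face graph $G_2$ lying inside $c$ contains no $B$-labelled face, no discus is attached to form $Int(c)$, and so $Int(c)$ coincides with $G_2$. Its bounded faces split into a collection of triangles together with the $H$-labelled faces of $G$ enclosed by $c$, each of the latter of length at least $4$.

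The core step is then an Euler/face-counting computation. Writing $r=|c|$, letting $T$ be the number of interior triangular faces of $G_2$, and letting $k_1,\dots,k_m$ be the lengths of the $H$-labelled interior faces, the Euler identity $V-E+F=2$ combined with the edge/face incidence identity $r+3T+\sum_i k_i = 2E$ collapses (after routine algebra) to
\[
f(Int(c)) \;=\; r + 3 + \sum_{i=1}^{m}(k_i - 3).
\]
Since $r\geq 3$ and each $k_i\geq 4$, this quantity is at least $6$, with equality only when $r=3$ and $m=0$. Hence $Int(c)$ being $(3,6)$-tight forces $|c|=3$, and Lemma~\ref{CritSep3Cycle} then yields that $Ext(c)$ is also $(3,6)$-tight, completing the argument.

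The main obstacle is the Euler calculation itself: one must bookkeep interior triangles, the several enclosed $H$-holes, and the interior vertices of $G_2$ not on $c$ simultaneously, and confirm that they combine into the clean inequality above. One could instead apply the inclusion–exclusion formula $f(G^\dagger) = f(Ext(c)) + f(Int(c)) - f(c)$ to deduce $f(Ext(c)) = 2r$ whenever $Int(c)$ is tight, but this alone does not exclude the case $r>3$; the structural Euler argument on $Int(c)$ is what rules out that possibility and is the crux of the proof.
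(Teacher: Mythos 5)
Your proposal is correct and follows essentially the same route as the paper: both arguments observe that, with the $B$-labelled face taken as the unbounded face, $Int(c)$ contains no block and is a subgraph of a triangulated sphere (a simplicial disc with boundary $c$), so $f(Int(c))\geq 6+(|c|-3)>6$ when $|c|>3$, forcing $Ext(c)$ to be the $(3,6)$-tight side, with the $3$-cycle case handled by Lemma~\ref{CritSep3Cycle}. Your explicit Euler-type identity $f(Int(c))=|c|+3+\sum_i(k_i-3)$ is just a sharpened form of the paper's one-line freedom-number bound.
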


\proof
If $c$ is a $3$-cycle then apply Lemma \ref{CritSep3Cycle}.
If $c$ is not a $3$-cycle then $Int(c)$ is a subgraph of a triangulated sphere with $f(Int(c))\geq 6+(|c|-3)>6$. 
\endproof





\begin{prop}
\label{FaceProp}
Let $G$ be a face graph of type $(1,n)$ and suppose that there are no $TT$ or $BH$ edges in $G$. 
\begin{enumerate}[(i)]
\item
If $G^\dagger$ satisfies the Maxwell count then
$G$ contains a proper cycle $\pi$, which is not the boundary of a face, such that $Ext(\pi)$ satisfies the Maxwell count.
\item
If $G\in \G(1,n)$ then $G$ contains a critical separating cycle for $G$ which is not the boundary of a face.
\end{enumerate}
\end{prop}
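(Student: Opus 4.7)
The plan is to reformulate the Maxwell count for $Ext(\pi)$ as a simple cycle invariant that can be tracked under local modifications. For a proper cycle $\pi$ in a fixed planar realisation of $G$, let $v_{\rm int}(\pi)$ and $e_{\rm int}(\pi)$ denote the numbers of vertices and edges of $G$ lying strictly inside $\pi$. A direct count gives
\[
f(Int(\pi)) = 2|\pi| + 3v_{\rm int}(\pi) - e_{\rm int}(\pi),
\]
and combining this with $f(G^\dagger) = f(Ext(\pi)) + f(Int(\pi)) - f(\pi)$ and $f(\pi) = 2|\pi|$ yields $f(Ext(\pi)) = f(G^\dagger) + w(\pi)$, where $w(\pi) := e_{\rm int}(\pi) - 3v_{\rm int}(\pi)$. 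Under the Maxwell hypothesis on $G^\dagger$, the cycle $\pi$ is therefore critical precisely when $w(\pi) = 0$. Every face boundary trivially has $w = 0$, and so does $\partial B$ by hypothesis.

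The hypothesis that $G$ has no $TT$ and no $BH$ edges forces every edge of $G$ to be of type $BT$ or $HT$, since $BB$ edges and within-face $HH$ edges cannot occur in a face graph derived from a $3$-connected triangulation. This gives the disjoint decomposition $E(G) = E(\partial B) \sqcup \bigsqcup_i E(\partial H_i)$, so every vertex of $G$ lies on some boundary cycle. For (i), my plan is to start from the critical cycle $\pi_0 = \partial B$ and perform a sequence of detour moves, each replacing an edge $e$ of the current cycle by the other two edges of the triangle on the interior side of $e$; by the hypothesis these replacement edges already lie on boundary cycles. Each detour changes $w$ by a predictable amount, and by threading the detours along a chain of triangles that eventually skirts the exterior of some $\partial H_j$, one arranges the accumulated changes to excise $H_j$ entirely from the enclosed hole set while restoring $w = 0$. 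The resulting proper cycle $\pi_*$ encloses a proper nonempty subset of the holes and so cannot be the boundary of any face.

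Part (ii) then follows immediately. If $G \in \G(1,n)$, then $G^\dagger$ is $(3,6)$-tight and hence $(3,6)$-sparse. By (i) there is a non-facial proper cycle $\pi$ with $Ext(\pi)$ satisfying the Maxwell count; since $Ext(\pi) \subseteq G^\dagger$ inherits $(3,6)$-sparsity, $Ext(\pi)$ is $(3,6)$-tight, and Lemma \ref{CritSep2} then identifies $\pi$ as a critical separating cycle for $G$. The main obstacle is in carrying out (i), specifically verifying that the detour sequence can always be chosen so that the resulting cycle is proper (simple) and strictly distinct from every face boundary. A possible alternative is a minimal-counterexample argument: choose a proper cycle $\pi \neq \partial B$ with $w(\pi)=0$ minimising the number of enclosed holes; if no such cycle exists other than face boundaries, then the edge decomposition $E(G) = E(\partial B) \sqcup \bigsqcup_i E(\partial H_i)$ combined with the Maxwell identity $|\partial B| = 3 + \sum_i (|\partial H_i|-3)$ should be exploited to force an enlargement of some $\partial H_j$ with the same weight but strictly more interior content, contradicting minimality.
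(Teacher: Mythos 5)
Your reduction of the Maxwell count for $Ext(\pi)$ to the vanishing of the weight $w(\pi)=e_{\rm int}(\pi)-3v_{\rm int}(\pi)$ is correct and is a reasonable reformulation, but the proof of part (i) is not actually carried out: the ``detour move'' argument is only a plan, and you yourself identify the decisive point --- that the sequence of detours can be chosen so as to terminate at a cycle which is simple, has $w=0$, and is not a face boundary --- as an unresolved obstacle. Nothing in the proposal shows that such a sequence exists, that the intermediate or final walks are proper cycles rather than closed walks with repeated vertices, or that the accumulated change in $w$ can be made to return to $0$ while enclosing a proper nonempty subset of the holes. The alternative minimal-counterexample sketch is likewise not executed. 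A secondary inaccuracy: excluding $TT$ and $BH$ edges does not force the decomposition $E(G)=E(\partial B)\sqcup\bigsqcup_i E(\partial H_i)$ to be disjoint, since an edge lying on the boundaries of two distinct $H$-labelled faces (an $HH$ edge between different holes) is not ruled out by the hypotheses; only the covering statement is available.

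The paper closes the gap you left with a short pigeonhole argument that your proposal is missing. Since every edge of $\partial B$ is of type $BT$, each vertex $v\in\partial B$ lies on some $H$-labelled face $H_v$ (otherwise an interior edge at $v$ would be of type $TT$). If these faces were distinct for the $r=|\partial B|$ boundary vertices, the Maxwell identity $r-3=\sum_{i=1}^n(|\partial H_i|-3)$ together with $|\partial H_i|-3\geq 1$ would give $r-3\geq r$, a contradiction. Hence some hole $H$ meets $\partial B$ in two distinct vertices $v,w$, and the two cycles $\pi_1,\pi_2$ obtained by concatenating an arc of $\partial B$ from $v$ to $w$ with an arc of $\partial H$ from $w$ to $v$ satisfy $Ext(\pi_1)\cap Ext(\pi_2)=B^\dagger$ and $Ext(\pi_1)\cup Ext(\pi_2)=G^\dagger$, whence $f(Ext(\pi_1))+f(Ext(\pi_2))=12$ and each summand equals $6$. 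These cycles are manifestly proper and non-facial, which is exactly the step your detour construction would have to establish. I recommend replacing the detour scheme with this counting argument; part (ii) then follows as you describe via Lemma \ref{CritSep2}.
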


\proof
Since $G$ contains no edges of type $TT$ or $BH$, every edge in the boundary cycle $\partial B$ is of type $BT$ (see Fig.~\ref{f:tooth}) and so each vertex $v$ in $\partial B$ must be contained in an $H$-labelled face $H_v$. 
If each vertex $v$ in $\partial B$ is contained in a distinct $H$-labelled face $H_v$ then let $r=|\partial B|$ and let $v_1,\ldots,v_r$ be the vertices of $\partial B$. Let $H_1,\dots,H_n$ be the $H$-labelled faces of $G$, indexed so that $H_i=H_{v_i}$ for each $i=1,2,\ldots,r$. Note that $r\leq n$.  
Since the block and hole graphs $G^\circ$ satisfy the Maxwell count it follows that,
\[
r-3=|\partial B|-3 = \sum_{i=1}^n(|\partial H_i|-3) \geq \sum_{i=1}^r(|\partial H_i|-3) \geq r
\]
This is a contradiction and so $H_v=H_w$ for some distinct vertices $v,w \in\partial B$. 
The boundary of this common $H$-labelled face is composed of two edge-disjoint paths $c_1$ and $c_2$ joining $v$ to $w$.
The boundary cycle $\partial B$ is also composed of two edge-disjoint paths joining $v$ to $w$. Let $\pi_1$ be the path in Fig.~\ref{f:tooth} which moves anti-clockwise along $\partial B$ from  $v$ to $w$ and then along $c_1$ from $w$ to $v$.
Similarly, let $\pi_2$ be the path which moves clockwise along $\partial B$ from  $v$ to $w$ and then along $c_2$ from $w$ to $v$.
Note that $\pi_1$ and $\pi_2$ are proper cycles in $G$ with
$Ext(\pi_1)\cap Ext(\pi_2) = B^\dagger$. Thus, 
\[f(G^\dagger)=f(Ext(\pi_1))+f(Ext(\pi_2))-f(B^\dagger),\]
and so, since $f(G^\dagger)=f(B^\dagger)=6$, it follows that
$f(Ext(\pi_1))=f(Ext(\pi_2))=6$.
Hence $Ext(\pi_1)$ and $Ext(\pi_1)$ both satisfy the Maxwell count.
This proves $(i)$ and now $(ii)$ follows immediately.
\endproof

\begin{center}
\begin{figure}[ht]
\centering
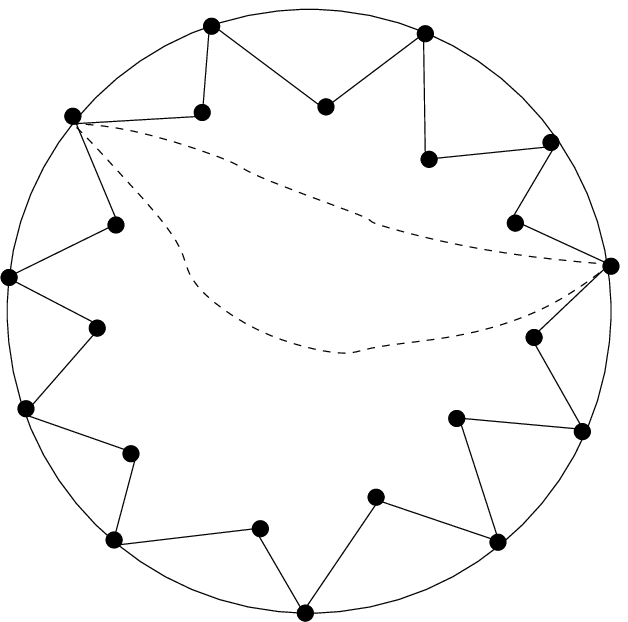
\caption{$H=H_v=H_w$.}
\label{f:tooth}
\end{figure}
\end{center}

\subsection{Separating cycle division}
The next reduction move for block and hole graphs is based on a division of the face graph with respect to  a critical separating cycle of edges. The usefulness of this arises from the fact that critical separating cycles arise when there are obstructions to $TT$ edge contraction.

\begin{definition}
Let $G$ be a face graph with a single $B$-labelled face and consider a planar realisation in which the unbounded face is labelled by $B$. Let $c$ be a proper cycle in $G$. 

Define $G_1$ to be the face graph obtained from $G$ and $c$ by,
\begin{enumerate}[(i)]
\item removing all edges and vertices interior to $c$, and,
\item if $|c|\geq4$, viewing the edges of $c$ as the boundary of a new face with label  $H$. 
\end{enumerate}

Define $G_2$ to be the face graph obtained from $G$ and $c$ by,
\begin{enumerate}[(i)]
\item  removing all edges and vertices which are exterior to $c$, and, 
\item if $|c|\geq4$, viewing the edges of $c$ as the boundary of a new face with label  $B$. 
\end{enumerate}
This division process $G \to \{G_1, G_2\}$ is referred to as a \emph{separating cycle division} for the face graph $G$ and cycle $c$. 
\end{definition}

Note that, under this separating cycle division, $G_1^\dagger = Ext(c)$. If $|c|=3$ then $G_2^\dagger=Int(c)$ while if $|c|\geq 4$ then $G_2^\dagger=Int(c)\cup B^\dagger$ where $B^\dagger$ is the simplicial discus with perimeter vertices in $c$.

\begin{center}
\begin{figure}[ht]
\centering
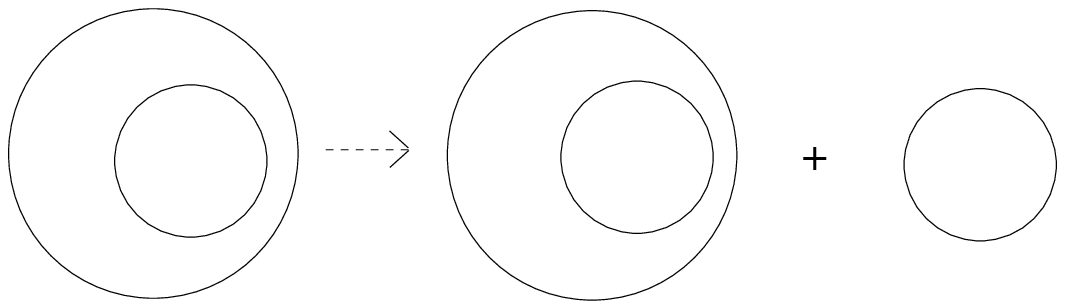
\caption{Separating cycle division in a face graph.}
\label{f:division}
\end{figure}
\end{center}

\begin{lemma}
\label{CycleDivision}
Let $G$ be a face graph in $\G(1,n)$ with a separating cycle division $G \to \{G_1, G_2\}$ for a critical separating cycle $c$ in $G$.
\begin{enumerate}[(i)]
\item If $|c|=3$ then $G_1\in \G(1,n)$ and $G_2 \in \G(0, 0)$.
\item 
If $|c|\geq 4$ then  $G_1\in \G(1,n-l+1)$ and $G_2 \in \G(1, l)$, where $l$ is the number of $H$-labelled faces interior to $c$. 
\end{enumerate}
\end{lemma}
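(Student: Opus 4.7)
The plan is to verify, for each of $G_1$ and $G_2$, the two requirements of membership in $\G(\cdot, \cdot)$: the correct face-type count, and $(3,6)$-tightness of the associated discus and hole graph. The main ingredients are the freedom identity
\[f(G^\dagger) = f(Ext(c)) + f(Int(c)) - f(c),\quad f(c) = 2|c|,\]
Lemmas \ref{CritSep3Cycle} and \ref{CritSep2}, which identify which side of $c$ is $(3,6)$-tight, together with Lemma \ref{CombSubs}, which transfers $(3,6)$-sparsity under the substitution of one $(3,6)$-tight subgraph by another.

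For case (i), $|c|=3$: Lemma \ref{CritSep3Cycle} gives that both $Ext(c)$ and $Int(c)$ are $(3,6)$-tight, so $G_1^\dagger=Ext(c)$ and $G_2^\dagger=Int(c)$ are already $(3,6)$-tight. It remains to pin down the face counts, and the key step is to show that no $H$-face of $G$ lies interior to $c$. Letting $l'$ denote the number of such interior $H$-faces with boundary sizes $h_1,\ldots,h_{l'}\geq 4$, a direct Euler computation (triangulate each interior $H$-face to obtain a triangulated disk of freedom number $|c|+3=6$, and observe that reopening the triangulation of each interior $H$-face increases the freedom number by $h_i-3\geq 1$) yields
\[f(Int(c)) = 6 + \sum_{i=1}^{l'}(h_i - 3) \geq 6 + l',\]
so $(3,6)$-tightness of $Int(c)$ forces $l'=0$. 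Consequently $G_1$ inherits the unique $B$-face and all $n$ $H$-faces of $G$ (type $(1,n)$), while $G_2$ is an unlabelled triangulated triangle (type $(0,0)$), establishing (i).

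For case (ii), $|c|\geq 4$: Lemma \ref{CritSep2} forces $Ext(c)$ to be $(3,6)$-tight, since (as noted in its proof) $f(Int(c))\geq |c|+3>6$ precludes $Int(c)$ from being tight. Hence $G_1^\dagger=Ext(c)$ is $(3,6)$-tight, and the face count for $G_1$ is bookkeeping: the $B$-face and the $n-l$ exterior $H$-faces of $G$ survive, and the cycle $c$ receives a new $H$-label, giving type $(1,n-l+1)$. For $G_2^\dagger=Int(c)\cup B^\dagger$, the freedom identity combined with $f(Ext(c))=f(G^\dagger)=6$ gives $f(Int(c))=2|c|$ and hence $f(G_2^\dagger) = f(Int(c))+f(B^\dagger)-f(c) = 2|c|+6-2|c| = 6$. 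To upgrade this Maxwell count to $(3,6)$-tightness I would invoke Lemma \ref{CombSubs} with data $(G,K,K')=(Int(c),Ext(c),B^\dagger)$: both $K$ and $K'$ are $(3,6)$-tight, both meet $Int(c)$ in precisely $c$, and hypothesis (iii) holds because the only edges of the simplicial discus $B^\dagger$ between vertices of $\partial B=c$ are the edges of $c$ themselves, which already lie in $E(Int(c))$. Since $G^\dagger=Int(c)\cup Ext(c)$ is $(3,6)$-sparse, Lemma \ref{CombSubs} transfers sparsity to $G_2^\dagger$, which combined with the Maxwell count gives tightness. The face count for $G_2$ then yields the $l$ interior $H$-faces of $G$ plus the new $B$-label on $c$, so $G_2\in\G(1,l)$. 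The only genuinely delicate point is the verification of hypothesis (iii) of Lemma \ref{CombSubs}, which rests on the specific structure of the simplicial discus having no chords along its boundary; the remainder is routine Euler and freedom-number bookkeeping.
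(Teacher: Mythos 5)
Your proof is correct and follows essentially the same route as the paper's: Lemma \ref{CritSep3Cycle} for case $(i)$, and for case $(ii)$ Lemma \ref{CritSep2} to handle $G_1^\dagger=Ext(c)$ together with Lemma \ref{CombSubs} applied to the substitution of $Ext(c)$ by the simplicial discus $B^\dagger$ to handle $G_2^\dagger$. The additional details you supply (the computation ruling out interior $H$-faces when $|c|=3$, and the verification of hypothesis $(iii)$ of Lemma \ref{CombSubs}) are points the paper leaves implicit, and both are handled correctly.
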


\begin{proof}
$(i)$ By Lemma \ref{CritSep3Cycle}, $G_1$ and $G_2$ both have $(3,6)$-tight discus and hole graphs. Since $G_2$ has no $B$-labelled faces it must be the graph of a triangulated sphere. 

$(ii)$
By Lemma \ref{CritSep2}, $G^\dagger_1=Ext(c)$ is $(3,6)$-tight.
That $G^\dagger_2$ is $(3,6)$-tight follows from Lemma \ref{CombSubs} since $G^\dagger=Ext(c)\cup Int(c)$ is $(3,6)$-tight and $Ext(c)$ (which intersects $Int(c)$ in $c$) may be substituted by the simplicial discus $B^\dagger$ with vertices in $c$ to obtain $G^\dagger_2$.
\end{proof}



It can happen that the only critical separating cycles in a face graph $G\in \G(m,n)$ are the trivial ones, that is,  the boundary cycles of the faces of $G$.

\begin{definition}
A face graph $G$ in $\G(m,n)$ is  \emph{indivisible} if every critical separating cycle for $G$ is the boundary cycle of a face 
of $G$.
\end{definition}

In the next section it is shown how repetition of $(3,6)$-tight-preserving $TT$ edge contractions may lead to the appearance of critical separating cycles. 
Through a repeated edge contraction and cycle division process a set of terminal and indivisible face graphs may be obtained. 
Such a face graph is illustrated in Fig. \ref{f:indivis}.

\begin{center}
\begin{figure}[ht]
\centering
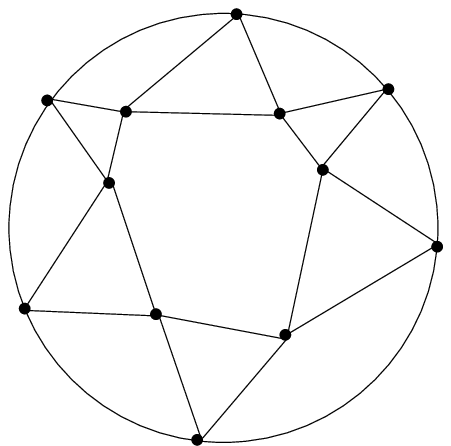
\caption{A  face graph in $\G(1,7)$ which is both terminal and indivisible.}
\label{f:indivis}
\end{figure}
\end{center}

\subsection{Key Lemmas}
If $c$ is a proper cycle in a face graph $G$, which is not the boundary of a face, then $int(c)$ denotes the subgraph of $G^\dagger$ obtained from $Int(c)$ by the removal of the edges of $c$.
The following result will be referred to as the ``hole-filling" lemma.

\begin{lemma}
\label{HoleFilling}
Let $G$ be a face graph in $\G(1,n)$.
Let $K$ be a subgraph of $G^\dagger$ and suppose that $c$ is a proper cycle  in $G\cap K$ with $E(K\cap int(c)) =\emptyset$. 
\begin{enumerate}[(i)]
\item $f(K\cup int(c))\leq f(K)$.
\item If $K$ is $(3,6)$-tight then $K\cup int(c)$ is $(3,6)$-tight.
\end{enumerate}
\end{lemma}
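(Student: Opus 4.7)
The plan is to prove (i) by a direct inclusion--exclusion calculation, reducing the comparison of $f(K \cup int(c))$ with $f(K)$ to the single inequality $f(Int(c)) \leq 2|c|$, which in turn follows from the $(3,6)$-sparsity of $Ext(c)$ inside the $(3,6)$-tight graph $G^\dagger$. Part (ii) will then follow by combining (i) with sparsity inherited from $G^\dagger$.

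First I would exploit the hypothesis $E(K \cap int(c)) = \emptyset$, noting that $V(c) \subseteq V(K \cap int(c))$ because $c \subseteq K$ and $V(c) \subseteq V(int(c))$. This makes $K \cap int(c)$ an edgeless graph, so $f(K \cap int(c)) = 3|V(K \cap int(c))|$. Since $E(c) \subseteq E(K)$, one has the identifications $K \cup int(c) = K \cup Int(c)$ and $K \cap Int(c) = (K \cap int(c)) \cup c$, giving $f(K \cap Int(c)) = 3|V(K \cap int(c))| - |c|$. Inclusion--exclusion then yields
\[ f(K \cup int(c)) - f(K) \;=\; f(Int(c)) \;-\; f(K \cap Int(c)) \;=\; f(Int(c)) + |c| - 3|V(K \cap int(c))|. \]

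Next I would establish $f(Int(c)) \leq 2|c|$. Using the decomposition $G^\dagger = Ext(c) \cup Int(c)$ with $Ext(c) \cap Int(c) = c$, and noting $f(c) = 3|c| - |c| = 2|c|$, inclusion--exclusion gives
\[ f(Ext(c)) + f(Int(c)) \;=\; f(G^\dagger) + 2|c| \;=\; 6 + 2|c|. \]
Since $G \in \G(1,n)$, the graph $G^\dagger$ is $(3,6)$-tight; in particular $Ext(c)$ is $(3,6)$-sparse, and as it contains the proper cycle $c$ (so $\geq 3$ edges) we get $f(Ext(c)) \geq 6$. Hence $f(Int(c)) \leq 2|c|$. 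Combining this with $|V(K \cap int(c))| \geq |c|$ produces
\[ f(K \cup int(c)) - f(K) \;\leq\; 2|c| + |c| - 3|c| \;=\; 0, \]
proving (i).

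For (ii), assume $K$ is $(3,6)$-tight. The Maxwell count follows at once: (i) gives $f(K \cup int(c)) \leq f(K) = 6$, while the reverse inequality $f(K \cup int(c)) \geq 6$ is forced by the $(3,6)$-sparsity of $G^\dagger$ applied to the subgraph $K \cup int(c)$ (which contains at least the edges of $c$). Sparsity of every subgraph of $K \cup int(c)$ is inherited directly from $G^\dagger$. The only real obstacle is the careful bookkeeping in the inclusion--exclusion—in particular recognising that the missing edges $E(c)$ in $int(c)$ are already supplied by $K$, so that $Int(c)$ (rather than $int(c)$) is effectively what gets glued to $K$—after which the decisive bound $f(Int(c)) \leq 2|c|$ drops out of the sparsity of $G^\dagger$ with no extra effort.
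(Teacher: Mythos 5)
Your proof is correct and follows essentially the same route as the paper: both arguments use the decomposition $G^\dagger = Ext(c)\cup Int(c)$ together with $(3,6)$-sparsity of $Ext(c)$ and tightness of $G^\dagger$ to bound the freedom number of the interior piece, and then glue that piece onto $K$ by inclusion--exclusion. The only difference is cosmetic bookkeeping (you work with $Int(c)$ and the bound $f(Int(c))\leq 2|c|$, the paper with $int(c)$ and $f(int(c))\leq 3|c|$, and you handle the possibility $|V(K\cap int(c))|>|c|$ explicitly, which the paper leaves implicit).
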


\begin{proof}
Since $G^\dagger$ is $(3,6)$-sparse, $f(K\cup int(c))\geq 6$ and $f(Ext(c))\geq 6$.
Note that, 
\[
6=f(G^\dagger) =f(Ext(c)) +f(int(c))-3|c|,
\]
and so 
$f(int(c))-3|c|\leq 0$. It follows that,
\[
f(K\cup int(c)) = f(K) + f(int(c))-3|c| \leq f(K).
\]
This proves $(i)$. To prove $(ii)$ apply the above argument with $f(K)=6$.
\end{proof}

The following lemma plays a key role in the proof of the main result. 

\begin{lemma}\label{l:criticalequivalence}
Let $G$ be a face graph in $\G(1,n)$ with $n\geq 1$.
Let $e$ be a contractible $TT$ edge in  $G$ with contracted face graph $G'$. 
Then the following statements are equivalent.
\begin{enumerate}[(i)]
\item $G'\notin \G(1,n)$.
\item The edge $e$ lies on a critical separating cycle of $G$.
\end{enumerate}
\end{lemma}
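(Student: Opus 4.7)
The plan is to use freedom-count arithmetic with respect to the $TT$-edge contraction $G \to G'$, combined with the hole-filling lemma, to connect sparsity-violating subgraphs of $(G')^\dagger$ to critical separating cycles through $e$. Write $e = uv$ and let $a, b$ denote the two apex vertices of the triangles incident with $e$; contractibility of $e$ ensures that $\{a,b\}$ is exactly the set of common neighbours of $u,v$ in $G$. Moreover, since $G'$ is a valid face graph, $u$ and $v$ cannot both lie on any non-triangular face boundary of $G$. A direct count then gives $f((G')^\dagger) = f(G^\dagger) = 6$, so the Maxwell count is preserved. Therefore $(i)$ is equivalent to the failure of $(3,6)$-sparsity for $(G')^\dagger$.

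For $(ii) \Rightarrow (i)$: given a critical separating cycle $c$ through $e$, Lemma \ref{CritSep2} gives that $Ext(c)$ is $(3,6)$-tight. Because $e$ lies on $c$, exactly one of the apex vertices, say $a$, is exterior to $c$ in the planar embedding, so the triangle $uva$ is contained in $Ext(c)$. Under the contraction, the image of $Ext(c)$ in $(G')^\dagger$ loses one vertex and two edges (the edge $uv$ together with the merger of $ua$ and $va$), hence has freedom $5$, violating $(3,6)$-sparsity.

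For $(i) \Rightarrow (ii)$: suppose $J^* \subseteq (G')^\dagger$ has $f(J^*) \leq 5$ and at least two edges. Since $G^\dagger$ is $(3,6)$-tight, any such $J^*$ must contain the contracted vertex $w$. Lift $J^*$ to a subgraph $K \subseteq G^\dagger$ by splitting $w$ into $\{u,v\}$, including the edge $uv$ and, whenever $wa \in E(J^*)$ (resp.\ $wb \in E(J^*)$), including both edges $ua, va$ (resp.\ $ub, vb$). A routine count yields $f(K) = f(J^*) + 2 - m$, where $m \in \{0,1,2\}$ is the number of apex vertices common to $u,v$ used doubly in the lift; combined with the inequality $f(K) \geq 6$, one obtains a $(3,6)$-tight subgraph $K \subseteq G^\dagger$ containing $e$. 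The critical separating cycle is then extracted by iteratively applying Lemma \ref{HoleFilling} to $K$: whenever a proper cycle $c' \subseteq K \cap G$ has $E(K \cap int(c')) = \emptyset$, absorb $int(c')$ into $K$, maintaining $(3,6)$-tightness. The resulting maximal tight enlargement $\tilde K$ contains the simplicial discus $B^\dagger$, and by planarity of $G$ its intersection with $G$ is bounded in the plane by a single proper cycle $c$ passing through $e$, giving $\tilde K = Ext(c)$. Lemma \ref{CritSep2} then identifies $c$ as a critical separating cycle through $e$.

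The main obstacle lies in the final step of $(i) \Rightarrow (ii)$: showing that the hole-filled closure $\tilde K$ is genuinely of the form $Ext(c)$ for a \emph{single} proper cycle $c$ through $e$, rather than being bounded by a disjoint union of cycles or failing to contain $B^\dagger$. This requires a careful planar analysis that uses the maximality of $\tilde K$ together with the $(3,6)$-tight structure of $G^\dagger$ to ensure that the outer boundary of $\tilde K \cap G$ is a single cycle encircling the isostatic block.
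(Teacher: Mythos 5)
Your overall strategy --- lift a sparsity-violating subgraph of $(G')^\dagger$ back to $G^\dagger$ by a freedom count, then saturate with the hole-filling lemma and read off a boundary cycle --- is the paper's strategy, and your easy direction $(ii)\Rightarrow(i)$ is essentially the paper's argument (contract $e$ inside the tight subgraph $Ext(c)$ and count a loss of one vertex and two edges). However, there is a genuine gap in $(i)\Rightarrow(ii)$, and it sits exactly where you yourself flag ``the main obstacle.'' Your count $f(K)=f(J^*)+2-m$ is correct, but the conclusion that one ``obtains a $(3,6)$-tight subgraph $K$'' fails when $m=0$, i.e.\ when neither apex $a$ nor $b$ of the two triangles at $e$ is attached to the contracted vertex in $J^*$ (in particular when neither apex lies in $V(J^*)$ at all). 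In that case you only get $f(K)\in\{6,7\}$, and when $f(K)=7$ the maximal hole-filled enlargement $\tilde K$ is \emph{not} of the form $Ext(c)$ for a single cycle: since $\tilde K$ contains $e$ but excludes both apexes, which lie on opposite sides of $e$ in the planar embedding, its complement has two components and $\tilde K = Ext(\pi_1)\cap Ext(\pi_2)$ for two proper cycles $\pi_1,\pi_2$ through $e$, one enclosing each apex. No amount of ``careful planar analysis'' will produce a single bounding cycle here, because the statement is simply false in this configuration.

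The paper resolves this (its Case (c)) not by forcing a single cycle but by an inclusion--exclusion count: $6=f(G^\dagger)=f(Ext(\pi_1))+f(Ext(\pi_2))-f(\tilde K)$ with $f(Ext(\pi_i))\geq 6$ and $f(\tilde K)\leq 7$ forces at least one of $f(Ext(\pi_1)),f(Ext(\pi_2))$ to equal $6$, so at least one $\pi_i$ is a critical separating cycle through $e$ --- which is all the lemma requires. You should also adopt the paper's device of first adjoining the missing apex edges $wa$, $wb$ to $J^*$ whenever the apex vertex is already present (this only lowers $f(J^*)$ and raises $m$), so that the residual hard case is genuinely ``apex absent from $V(J^*)$'' rather than ``apex edge absent from $E(J^*)$''; with that adjustment the case $m=2$ gives an immediate contradiction, $m=1$ gives your tight $K=Ext(c)$ argument, and $m=0$ requires the two-cycle argument above.
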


\begin{proof}
Suppose that $G'\notin \G(1,n)$ and let $e=uv$.
Then the discus and hole graph $(G')^\dagger$ is not $(3,6)$-tight
and so there exists a subgraph $K'$ in $(G')^\dagger$ with $f(K')\leq5$.
Let $v'$ be the vertex in $G'$ obtained by the identification of $u$ and $v$. Evidently, $v'\in V(K')$ since, otherwise, $G^\dagger$ must contain a copy of $K'$ and this contradicts the $(3,6)$-sparsity count for $G^\dagger$. 
There are two pairs of edges $xu$, $xv$ and $yu$, $yv$ in $G$ which are identified with $xv'$ and $yv'$ in $G'$ on contraction of $e$ (see Fig.~\ref{f:Kgraph}).

\begin{center}
\begin{figure}[ht]
\centering
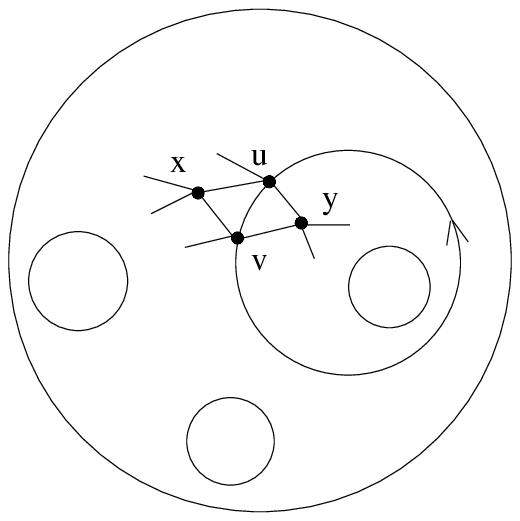
\caption{Locating a critical separating cycle for $e$.}
\label{f:Kgraph}
\end{figure}
\end{center}

Case $(a)$. Suppose that $x,y\in V (K')$. Let $K$ be the subgraph obtained from $K'$ by first adjoining the edges $xv'$ and $yv'$ to $K'$ (if necessary) and then reversing the $TT$ edge contraction on $e$. Then $f(K')\geq f(K)\geq 6$ which is a contradiction.  
 
Case $(b)$. Suppose that $x\in V (K')$ and $y\notin V(K')$.
Let $K$ be the subgraph of $G^\dagger$ obtained from $K'$ by first adjoining the edge $xv'$  to $K'$ (if necessary) and then reversing the $TT$ edge contraction on $e$. Then $f(K)\leq f(K')+1\leq 6$ and so $f(K)=6$.
In particular, $K$ is $(3,6)$-tight. Rechoose $K$, if necessary, to be a maximal $(3,6)$-tight graph in $G^\dagger$ which contains the edge $e$ and does not contain the vertex $y$.
Note that $K$ must be connected and must contain the isostatic block in $G^\dagger$.
Since $K$ is maximal, by the hole-filling lemma (Lemma \ref{HoleFilling}), $K=Ext(c)$ for some proper cycle $c$ in $G$. 
This cycle is a critical separating cycle for $G$, and so $(i)$ implies $(ii)$ in this case.

Case $(c)$. Suppose that $x\notin V(K')$ and $y\notin V(K')$. 
Let $K$ be the subgraph of $G^\dagger$ obtained from $K'$ by reversing the $TT$ edge contraction on $e$. Then $f(K)=f(K')+2\leq 7$ and so $f(K)\in\{6,7\}$.
Once again assume that $K$ is a maximal subgraph with this property. 
Then $K$ must be connected and must contain the isostatic block in $G^\dagger$.
By the planarity of $G$ there are two proper cycles $\pi_1, \pi_2$ of $G$, passing through $e$, with $int(\pi_1)$ and $int(\pi_2)$ disjoint from $K$ and containing $x$ and $y$ respectively. 
Since $K$ is maximal, by the hole-filling lemma (Lemma \ref{HoleFilling}), $K=Ext(\pi_1)\cap Ext(\pi_2)$.
Note that $f(Ext(\pi_1))\geq 6$, $f(Ext(\pi_2))\geq 6$ and 
\[6=f(G^\dagger) = f(Ext(\pi_1)) + f(Ext(\pi_2)) - f(K).\] 
Thus, since $f(K)\in\{6,7\}$, at least one of $\pi_1$ and $\pi_2$ is a critical separating cycle and so $(i)$ implies $(ii)$.

For the converse, suppose that the contractible edge $e$ lies on a critical separating cycle $c$. Then $c$ is a separating cycle for a division $G \to \{G_1, G_2\}$ and  $G_1^\dagger$ is a $(3,6)$-tight subgraph of $G^\dagger$. Since the edge $e$ lies in exactly one triangular face of $G_1^\dagger$, the graph obtained from $G_1^\dagger$ by contracting $e$ is a subgraph of $(G')^\dagger$ with freedom number $5$ and so $(i)$ does not hold.
\end{proof}

\begin{cor}\label{c:nocontraction}
Let $G$ be a face graph in $\G(1,n)$ which is both terminal and indivisible. Then $G$ contains no $TT$ edges.
\end{cor}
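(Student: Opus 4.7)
The plan is to argue by contradiction: assume $G\in\G(1,n)$ is terminal and indivisible and that $G$ nevertheless contains a $TT$ edge $e=uv$. Denote the two triangular faces adjacent to $e$ by $uvx$ and $uvy$.

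The first step is to observe that indivisibility forbids every non-facial $3$-cycle of $G$. By Lemma \ref{CritSep3Cycle} every $3$-cycle of $G$ is a critical separating cycle, and indivisibility forces each such cycle to be the boundary of a face. In particular, the only $3$-cycles of $G$ passing through $e$ are $uvx$ and $uvy$, so $e$ satisfies the definition of a contractible $TT$ edge. Since $G$ is terminal, the contracted face graph $G'$ does not lie in $\G(1,n)$, and so Lemma \ref{l:criticalequivalence} supplies a critical separating cycle $c$ of $G$ with $e\in c$.

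The delicate step is to verify that $c$ may be chosen distinct from both $uvx$ and $uvy$. The bare statement of Lemma \ref{l:criticalequivalence} does not quite give this, so I would revisit its proof: in Cases (b) and (c) the cycle $c$ is produced as the boundary of a maximal $(3,6)$-tight subgraph $K=Ext(c)$ of $G^\dagger$ which contains $e$ but omits at least one of $x, y$. The omitted vertex then lies strictly in the interior of $c$. This rules out $c=uvy$ (whose vertex set contains $y$) and also $c=uvx$, since for the triangular face $c=uvx$ we would have $Ext(c)=G^\dagger$ and so $K$ could not omit $y$. Thus $c$ is a critical separating cycle through $e$ which is not the boundary of any face adjacent to $e$.

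Indivisibility now forces $c$ to bound some face of $G$. Since $e\in c$, that face must be adjacent to $e$; but the only faces of $G$ adjacent to the $TT$ edge $e$ are the triangles $uvx$ and $uvy$, contradicting the previous paragraph. The main obstacle is exactly this non-triviality step: the two triangular faces at $e$ are themselves critical separating cycles by Lemma \ref{CritSep3Cycle}, so if Lemma \ref{l:criticalequivalence} were only guaranteed to produce one of these trivial cycles, no contradiction with indivisibility would follow. The argument therefore relies on extracting the stronger geometric conclusion implicit in the case analysis inside the proof of Lemma \ref{l:criticalequivalence}, where the cycle is constructed so as to strictly enclose one of $x$ or $y$.
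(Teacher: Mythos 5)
Your argument is correct and follows the paper's own proof: the non-contractible case is dispatched via Lemma \ref{CritSep3Cycle} and the contractible case via Lemma \ref{l:criticalequivalence}, each yielding a critical separating cycle that contradicts indivisibility. Your additional step of reopening the proof of Lemma \ref{l:criticalequivalence} to confirm that the cycle it produces strictly encloses $x$ or $y$, and hence cannot be the boundary of either triangular face at $e$, is a refinement the paper leaves implicit — and it is genuinely needed, since the facial triangles at $e$ are themselves critical separating cycles and would give no contradiction with indivisibility.
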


\begin{proof}
Suppose there exists a $TT$ edge $e$ in $G$. 
Since $G$ is terminal, either $e$ is not contractible or $e$ is contractible but the graph obtained by contracting $e$ is not in $\G(1,n)$.
If $e$ is not contractible then it must be contained in a non-facial $3$-cycle $c$.
By Lemma \ref{CritSep3Cycle}, $c$ is a critical separating cycle for $G$.
However, this contradicts the indivisibility of $G$.
If $e$ is contractible then by Lemma \ref{l:criticalequivalence},
$e$ lies on a critical separating cycle.
Again this contradicts the indivisibility of $G$ and so the result follows.
\end{proof}



\subsection{Contracting edges of $BH$ type}
A $BH$ edge $e$ of a face graph $G$ is {\em contractible} if it does not belong to any $3$-cycle in $G$. A {\em $BH$ edge contraction} is a graph move $G\to G'$ on the class of face graphs under which the vertices of a contractible $BH$ edge of $G$ are identified. 
At the level of the discus and hole graph $G^\dagger$, a contractible $BH$ edge $e$ is contained in a simplicial discus $B^\dagger$ and is an edge of exactly two $3$-cycles of $G^{\dagger}$. The contraction of $e$ preserves the freedom number of $G^\dagger$ and can be reversed by vertex splitting. 
Thus, prima facie, there is the possibility of reducing an indivisible terminal face graph with a $(3,6)$-tight discus and hole graph to a smaller face graph which also has a $(3,6)$-tight discus and hole graph.
In the case of a  block and hole graph with a single block this is always the case.

\begin{lemma}\label{l:BHreduce3}
Let $G \in \G(1,n)$, $n\geq 1$, and let $G'$ be derived from $G$ by a $BH$ edge contraction. 
Then $G'$ is a face graph in either $\G(1,n)$, $\G(1,n-1)$ or
$\G(0,0)$. 
\end{lemma}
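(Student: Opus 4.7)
The plan is to carry out the $BH$ contraction at the level of the discus and hole graph $G^\dagger$. Since the $BH$ edge $e = uv$ is contractible, $u$ and $v$ share no common neighbour in $G$; consequently their only common neighbours in $G^\dagger$ are the two non-adjacent poles $x_B, y_B$ of the discus $B^\dagger$. So $e$ lies in exactly two triangles of $G^\dagger$ (namely $uvx_B$ and $uvy_B$), and contracting $e$ in $G^\dagger$ identifies $u, v$ into a single vertex $w$, removes $e$, and deduplicates the pairs $\{ux_B, vx_B\}$ and $\{uy_B, vy_B\}$, a net change of $-1$ vertex and $-3$ edges. Hence $f(G^\dagger / e) = f(G^\dagger) = 6$ and the Maxwell count is preserved.

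I would next classify $G'$ according to the sizes of $\partial B$ and $\partial H$. The contraction converts these cycles into cycles through $w$ of lengths $|\partial B| - 1$ and $|\partial H| - 1$, each of which remains a labelled face of $G'$ iff its length is at least $4$. The identity $\sum_i |\partial H_i| = |\partial B| + 3n - 3$ (forced by $f(G^\dagger) = 6$) together with $|\partial H_i| \geq 4$ gives $|\partial B| \geq n+3$. Three subcases arise: if $|\partial B|, |\partial H| \geq 5$ then $G'$ has type $(1,n)$; if $|\partial B| \geq 5$ and $|\partial H| = 4$ then $\partial H$ collapses to a triangle and $G'$ has type $(1,n-1)$; if $|\partial B| = 4$, the inequality $|\partial B| \geq n+3$ forces $n=1$ and $|\partial H| = 4$, both boundaries collapse, and $G'$ is a triangulated sphere of type $(0,0)$.

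To verify that $(G')^\dagger$ is $(3,6)$-tight, in the first two subcases take the poles of the new discus $(B')^\dagger$ to be $x_B, y_B$ so that $(G')^\dagger = G^\dagger / e$; Maxwell is already secured. For $(3,6)$-sparsity, the contraction of $e$ is the inverse of a Whiteley vertex split at $w$ using the two non-adjacent common neighbours $x_B, y_B$ (the move from the Appendix), and a lifting argument applies: a subgraph $J' \subseteq G^\dagger / e$ lifts to $J \subseteq G^\dagger$ by un-identifying $w$ into $u, v$, reinserting $e$, and reintroducing the merged edges to $x_B, y_B$; the relation $f(J) = f(J') + 2 - k$ holds, where $k \in \{0, 1, 2\}$ counts the edges among $\{wx_B, wy_B\}$ lying in $J'$. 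When $k = 2$, sparsity of $G^\dagger$ immediately gives $f(J') \geq 6$. In the third subcase $(G')^\dagger = G'$ is a triangulated sphere, which is $(3,6)$-tight by Gluck's theorem.

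The main obstacle is the sparsity argument when $k < 2$: the naive lift yields only $f(J') \geq 6 - (2 - k)$, which is insufficient for $k = 0, 1$. I would handle this by enlarging $J'$ to include the missing poles and the corresponding edges $wx_B, wy_B$. Since the poles are adjacent to every vertex of $\partial B / e$ in $G^\dagger / e$ and mutually non-adjacent, this augmentation affects the freedom number in a controlled way, and Lemma \ref{HoleFilling} is invoked to guarantee that the augmented subgraph sits inside a $(3,6)$-tight subgraph of $G^\dagger$ whose sparsity bound transfers back to give $f(J') \geq 6$.
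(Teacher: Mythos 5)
Your first two paragraphs are sound and essentially match the paper's argument: the Maxwell count is preserved because the contraction in $G^\dagger$ loses one vertex and three edges (the contractible $BH$ edge lies in exactly the two triangles through the poles), and your classification of the type of $G'$ via the identity forced by $f(G^\dagger)=6$ agrees with, and slightly sharpens, the paper's case analysis. Your $k=2$ case is also the paper's argument for subgraphs containing both pole edges.

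The gap is exactly where you flag it, and your proposed repair does not work. If $J'$ omits a pole, adjoining that pole together with the single edge $wx_B$ \emph{raises} the freedom number by $2$, so sparsity of the augmented graph would only yield $f(J')\geq 4$; and ``sitting inside a $(3,6)$-tight subgraph of $G^\dagger$'' gives no lower bound on $f(J')$, since $J'$ is a subgraph of $(G')^\dagger$ rather than of $G^\dagger$, and in any case containment in a tight graph bounds nothing from below. Lemma \ref{HoleFilling} fills in the interiors of cycles already present in a subgraph and cannot manufacture the missing pole edges. The paper resolves the two problematic cases by different means, neither of which appears in your proposal. When $K'$ contains neither pole, the lift $K$ is a subgraph of the face graph $G$ itself, and one shows directly that $f(K)\geq 6+(|\partial B_1|-3)+(|\partial H_1|-3)\geq 8$, exploiting the fact that $K$ contains the edge $e$ lying on both $\partial B_1$ and $\partial H_1$; then $f(K')=f(K)-2\geq 6$. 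When $K'$ contains exactly one pole, say $x$, the lift $K$ is exhibited as a subgraph of a triangulated sphere (replace the discus by the cone over $x$ alone and triangulate the holes), so $K'$ is again a subgraph of a triangulated sphere and $f(K')\geq 6$ follows. Without an argument of this kind for the pole-free and one-pole cases, the $(3,6)$-sparsity of $(G')^\dagger$ is not established.
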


\begin{proof}
Let $e=uv$ be the contractible $BH$ edge in $G$ with $B_1$ and $H_1$ the adjacent labelled faces of $G$ and $v'$ the vertex in $G'$ obtained on identifying of $u$ and $v$. 
Then $e$ is contained in exactly two $3$-cycles of $G^\dagger$ which lie in the simplicial discus ${B_1^\dagger}$. 
Clearly, $(G')^\dagger$ satisfies the Maxwell count since $f((G')^\dagger)=f(G^\dagger)=6$. 
The $BH$ edge contraction on $e$ reduces the length of the boundary cycle $\partial B_1$ by one. If this reduction of the boundary cycle results in a $3$-cycle then $G'$ has no $B$-labelled face. Moreover, the Maxwell count for $G'$ ensures that there are no $H$-labelled faces in $G'$. Thus $G'\in\G(0,0)$. 
If $G'$ has one $B$-labelled face then it must have either $n$ or $n-1$ $H$-labelled faces, depending on whether or not the $BH$ edge contraction on $e$ reduces the boundary cycle $\partial H_1$ to a $3$-cycle. It remains to show that $(G')^\dagger$ is $(3,6)$-sparse in this case.

If $K'$ is a subgraph of $(G')^\dagger$ then $K'$ may be obtained from a subgraph $K$ of $G^\dagger$ by the contraction of $e$.
Let $x$ and $y$ be the polar vertices of the simplicial discus $B_1^\dagger$. If $K'$ contains neither of the vertices $x, y$ then $K$ is a subgraph of $G$ with $f(K)\geq 6+(|\partial B_1|-3) + (|\partial H_1|-3)\geq 8$. Thus $f(K')=f(K)-2\geq 6$.
Suppose that $K'$ contains exactly one of the polar vertices $x, y$. Then, assuming it is the vertex $x$, it follows that $K$ is a subgraph of the triangulated sphere obtained from $G$ by substituting the simplicial disc $B_1$ with the discus hemisphere for the vertex $x$ and by inserting simplicial discs in the $H$-labelled faces of $G$. It follows that $K'$ is also a subgraph of a triangulated sphere and so $f(K')\geq 6$.
Now suppose that $K'$ contains both of the polar vertices $x, y$.
It is sufficient to consider the case when $K'$ contains the edges $xv'$ and $yv'$ and to assume that $xu,xv,yu,yv\in K$.
Then $f(K')=f(K)\geq6$.
It follows that $(G')^\dagger$ is $(3,6)$-sparse.
\end{proof}

For multiblock graphs a $BH$ edge contraction need not preserve $(3,6)$-tightness.

\begin{example}
Let $G\in \G(2,3)$ be the face graph illustrated in Fig.~\ref{f:BHprob}. Contraction of the edge $e$ leads to a vertex which is adjacent to four vertices in $\partial B_1$ and so the associated discus and hole graph is not $(3,6)$-tight.
\end{example}

\begin{center}
\begin{figure}[ht]
\centering
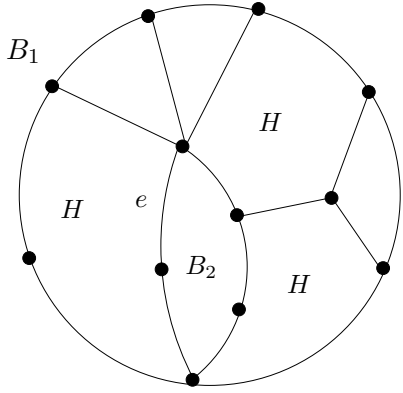
\caption{A contractible $BH$ edge $e$ in a face graph $G\in \G(2,3)$ with inadmissible contraction.}
\label{f:BHprob}
\end{figure}
\end{center}

The following analogue of Lemma \ref{l:BHreduce3} applies to multi-block graphs.

\begin{lemma}\label{l:BHreduce}
Let $G$ be a face graph in $\G(m,n)$ with $m,n\geq 1$. Let $e$ be an edge of a path in $\partial B_i\cap \partial H_j$ which has length $3$ or more
and let $G'$ be the face graph, of type $(m',n')$ obtained by the contraction of $e$. Then $G'\in \G(m',n')$.
\end{lemma}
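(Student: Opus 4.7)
The plan is to follow the proof of Lemma~\ref{l:BHreduce3} with adaptations for the multi-block setting, treating the other simplicial discs $B_k^\dagger$ ($k \neq i$) as part of an ambient structure that the contraction leaves untouched. I will first verify that the Maxwell count is preserved and then establish the $(3,6)$-sparsity of $(G')^\dagger$ by a lift-and-case-analysis argument, using the path-length hypothesis to exclude the multi-block pathology exhibited in Fig.~\ref{f:BHprob}.

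For the Maxwell count, the $BH$ edge $e$ of $G$ belongs in $G^\dagger$ to precisely two triangular faces, both sitting in $B_i^\dagger$ through the polar vertices $x, y$. Contracting $e$ removes one vertex and three edges, and so $f((G')^\dagger) = f(G^\dagger) = 6$. For sparsity, suppose towards contradiction that some subgraph $K'$ of $(G')^\dagger$ with at least two edges satisfies $f(K') \leq 5$. Writing $e = uv$ and $v'$ for the identification of $u$ and $v$ in $G'$, I would lift $K'$ to a subgraph $K$ of $G^\dagger$ by splitting $v'$ back into $u$ and $v$, reinstating the edge $e$ when $v' \in V(K')$, and distributing the $K'$-edges incident to $v'$ between $u$ and $v$. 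A case analysis based on whether $V(K')$ contains both, one, or neither of the poles $x, y$ then yields $f(K) \leq f(K') + c$ with $c \in \{0,1,2\}$, and the $(3,6)$-sparsity of $G^\dagger$ forces $f(K) \geq 6$, leaving only a few configurations to rule out directly.

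The essential role of the hypothesis that $e$ lies on a path $P \subseteq \partial B_i \cap \partial H_j$ of length at least three is that at least one endpoint of $e$, say $u$, must be an interior vertex of $P$; such a vertex has degree two in $G$, its only $G$-neighbors being its two $P$-neighbors, and in $G^\dagger$ its only additional neighbors are the poles $x, y$ of $B_i^\dagger$. This rigidly controls how the identification can create new adjacencies and, in particular, rules out the mechanism of Fig.~\ref{f:BHprob} in which the merged vertex acquires extra incidences to some other block $B_k^\dagger$. Consequently, the freedom-number comparisons from the single-block case carry over intact to the present setting.

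The main obstacle I anticipate lies in the case where $v' \in V(K')$ but neither pole of $B_i^\dagger$ lies in $V(K')$, where the freedom budget is tightest and the argument must exploit the global structure of $G^\dagger$ rather than purely local counting. There I expect to choose $K$ maximal subject to containing $e$ and missing a prescribed pole, and then apply the hole-filling lemma (Lemma~\ref{HoleFilling}) to identify $K$ with a subgraph of the form $Ext(\pi)$, or an intersection of two such exteriors, for proper cycles $\pi$ in $G$. The freedom-number identity for this maximal graph, together with the interior-vertex property of $u$, then yields $f(K') \geq 6$ and delivers the required contradiction, completing the proof that $(G')^\dagger$ is $(3,6)$-tight and hence $G' \in \G(m',n')$.
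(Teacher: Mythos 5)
You have correctly isolated the one idea that makes the multi-block case work: the hypothesis that $e$ lies on a path of length at least $3$ in $\partial B_i\cap\partial H_j$ forces an endpoint of $e$ to be an interior vertex of that path, hence of degree $2$ in $G$ and of degree $4$ in $G^\dagger$, its only off-path neighbours being the two poles of $B_i^\dagger$. This is precisely the observation that excludes the pathology of Fig.~\ref{f:BHprob}, and it is the hinge of the paper's proof. Your Maxwell count argument is also the paper's.

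The conversion of that observation into $(3,6)$-sparsity is where you diverge from the paper and where your plan has a genuine gap, at exactly the case you flag as hardest. The paper does not argue by contradiction or by a case analysis on the poles: it decomposes $K'=K_1'\cup K_2'$ with $K_1'$ inside the contracted discus and $K_2'$ edge-disjoint from it, notes that the path hypothesis keeps $|V(K_1'\cap K_2')|=|V(K_1\cap K_2)|$, and computes by inclusion--exclusion that $f(K')=f(K)-(4-j)\geq 6$, where $j$ is the degree in $K$ of the lost degree-$4$ vertex; everything is reduced to counting the $4-j$ absent edges at that vertex against the sparsity of $G^\dagger$. Your plan instead defers the case ``$v'\in V(K')$ but neither pole of $B_i^\dagger$ in $V(K')$'' to the maximality-plus-hole-filling machinery of Lemma~\ref{l:criticalequivalence}. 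That machinery is built for $\G(1,n)$: its decisive step is that a maximal tight $K$ \emph{must contain the isostatic block}, which is what licenses the identification $K=Ext(\pi)$ via Lemma~\ref{HoleFilling}. In your hard case $K$ omits both poles of $B_i^\dagger$, so it contains no distinguished block at all (and with $m\geq 2$ it could contain some blocks and not others), and the identification of the maximal $K$ with $Ext(\pi)$, or with an intersection of exteriors, is simply not available. What actually has to be shown there is that no $(3,6)$-tight subgraph of $G^\dagger$ contains the two path-neighbours of the interior vertex without containing the interior vertex itself; ``I expect to \dots then yields $f(K')\geq 6$'' does not supply this, and the single-block lemmas you cite will not supply it either. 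You should replace that step with the paper's direct decomposition-and-count against $B_i^\dagger$.
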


\begin{proof}
It is clear that $(G')^\dagger$ satisfies the Maxwell count.
Let $K'$ be a subgraph of $(G')^\dagger$ with decomposition 
$K'=K_1'\cup K_2'$ such that $K_1' \subset (B_i')^\dagger$ and $E(K'_2)\cap E(B_i')=\emptyset$. We may assume that $K'$ is the contraction of
a subgraph
$K\subseteq G^\dagger$  containing $e$ and that $K$ has the corresponding
decomposition
$K=K_1\cup K_2$ (with $K_1 \subset B_i^\dagger$ and $E(K_2)\cap E(B_i^\dagger)=\emptyset$). In fact we can identify the graphs $K_2$ and $K_2'$. Note that in view of the path hypothesis the edge-less graph $K_1'\cap K_2'$ has the same number of vertices as $K_1\cap K_2$.

Observe that $K_1'$ results from $K_1$ through the loss of a vertex $v$ in $K_1$, of degree $4$ in ${G^\dagger}$.
If $v$ has degree $j$ in $K$ (and hence in $K_1$) with $1\leq j\leq 4$ then note that $f(K)\geq 6+(4-j)$ and $f(K_1)\geq 6+(4-j)$. Also, on contraction to $K_1'$ there is a reduction of
$j-1$ edges.
Thus, 
\begin{eqnarray*}
f(K')&=& f(K_1')+f(K_2')-f(K_1'\cap K_2')\\
&=& f(K_1')+f(K_2)-f(K_1\cap K_2)\\
&=& (f(K_1)-3+(j-1))+f(K_2)-f(K_1\cap K_2)\\
&=& f(K)- (4-j) \geq 6 
\end{eqnarray*}
This shows that $(G')^\dagger$ is $(3,6)$-sparse.
\end{proof}

In the light of Lemma \ref{l:BHreduce3}, the indivisible terminal face graph of Fig. \ref{f:indivis} may be reduced by $BH$ edge contractions and further edge contraction reductions become possible in view of the emerging edges of type $TT$. One can continue such reductions until termination at the terminal graph of $\G(0,0)$ which is $K_3$. In fact this kind of reduction is possible in general and forms a key part of the  proof of Theorem \ref{t:oneblockA}.



\begin{definition}
A face graph $G$ is {\em $BH$-reduced} if it contains no contractible $BH$ edges.
\end{definition}

\begin{cor}
\label{Facetheorem}
For each $n\geq 1$, there is no face graph in $\G(1,n)$ which is terminal, indivisible and $BH$-reduced.
\end{cor}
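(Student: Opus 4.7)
The plan is to argue by contradiction, assuming some $G\in\G(1,n)$ with $n\geq 1$ is simultaneously terminal, indivisible, and $BH$-reduced. The strategy is to show that these three hypotheses together force $G$ to contain no $TT$ edges and no $BH$ edges, and then to invoke Proposition \ref{FaceProp}(ii), which will produce a non-facial critical separating cycle of $G$, directly contradicting indivisibility.

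The first step is immediate from the previous work: Corollary \ref{c:nocontraction} already states that a terminal, indivisible face graph in $\G(1,n)$ has no $TT$ edges.

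The substantive step is to rule out $BH$ edges. Suppose, toward contradiction, that $e=uv$ is a $BH$ edge of $G$. Since $G$ is $BH$-reduced, $e$ is not contractible, so by definition $e$ must lie on some $3$-cycle $c$ of $G$. A $3$-cycle automatically has distinct vertices and hence is a proper cycle, so Lemma \ref{CritSep3Cycle} applies and $c$ is a critical separating cycle for $G$. Indivisibility then forces $c$ to be the boundary cycle of some face of $G$, and since $|c|=3$ that face must be triangular, i.e., $T$-labelled. But then one of the two faces of $G$ incident to $e$ is $T$-labelled, so $e$ is of type $BT$ or $HT$, contradicting the assumption that $e$ is a $BH$ edge. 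Therefore $G$ has no $BH$ edges.

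With both hypotheses of Proposition \ref{FaceProp}(ii) now verified (no $TT$ and no $BH$ edges), that proposition hands us a critical separating cycle for $G$ which is not the boundary of any face, in direct contradiction with indivisibility. I expect the only nontrivial step to be the middle one, where the clean translation of ``no contractible $BH$ edge'' into ``no $BH$ edge at all'' requires routing through the observation that every $3$-cycle is a critical separating cycle and must therefore, under indivisibility, be facial; the rest of the argument is bookkeeping that invokes Corollary \ref{c:nocontraction} and Proposition \ref{FaceProp} as black boxes.
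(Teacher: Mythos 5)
Your proposal is correct and follows essentially the same route as the paper: no $TT$ edges via Corollary \ref{c:nocontraction}, no $BH$ edges via the $3$-cycle/Lemma \ref{CritSep3Cycle} argument, then Proposition \ref{FaceProp} contradicting indivisibility. The only cosmetic difference is in the $BH$ step, where the paper notes directly that a $3$-cycle through a $BH$ edge is non-facial (its adjacent faces have boundary length at least four) and contradicts indivisibility, whereas you let indivisibility force the cycle to be facial and contradict the edge's $BH$ type; the two are logically equivalent.
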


\proof
Suppose there exists $G\in\G(1,n)$ which is terminal, indivisible and $BH$-reduced.
By Corollary \ref{c:nocontraction},  $G$ contains no $TT$ edges.
If an edge $e$ in $G$ is of type $BH$ then, since $G$ is $BH$-reduced, $e$ is not contractible and so must be contained in a non-facial $3$-cycle $c$ of $G$.
By Lemma \ref{CritSep3Cycle}, $c$ is a critical separating cycle for $G$. 
However, this contradicts the assumption that $G$ is indivisible and so $G$ contains no $BH$ edges.
By Proposition \ref{FaceProp}, $G$ contains a critical separating cycle for $G$ which is not the boundary of a face.
However, this contradicts the indivisibility of $G$ and so there can be no face graph in $\G(1,n)$ which is terminal, indivisible and $BH$-reduced.
\endproof


\begin{cor}
\label{c:canconstruct} 
Let $G$ be a face graph in $\G(1,n)$. 
Then there exists a rooted tree in which each node is labelled by a face graph such that,
\begin{enumerate}[(i)]
\item the root node is labelled $G$,
\item every node has either one child which is obtained from its parent node by a $TT$ or $BH$ edge contraction, or, two children which are obtained from their parent node by a critical separating cycle division, 
\item each node is either contained in $\G(1,m)$ for some $m\leq n$  and is not a leaf, or, is contained in $\G(0,0)$ (in which case it is a leaf). 
\end{enumerate}
\end{cor}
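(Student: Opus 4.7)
The plan is to argue by strong induction on $|V(G)|$ and construct the tree recursively. In the base case $G \in \G(0,0)$, the tree consists of the single leaf node $G$; this covers the termination of the recursion. Otherwise $G \in \G(1,m)$ for some $1 \leq m \leq n$, and the inductive step will exhibit a reduction move at $G$ --- either an edge contraction producing a single child, or a cycle division producing two children --- with each child lying in $\G(1,\cdot) \cup \G(0,0)$ and having strictly fewer vertices than $G$. The inductive hypothesis then supplies a tree for each child, which we attach to $G$.

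The reduction at an internal node $G \in \G(1,m)$ with $m \geq 1$ is chosen by the following priority order. First, if $G$ is not terminal, select a contractible $TT$ edge $e$ whose contraction $G'$ lies in $\G(1,m)$ and attach the inductive subtree for $G'$ as the unique child. Second, if $G$ is terminal but not $BH$-reduced, select a contractible $BH$ edge $e$; by Lemma \ref{l:BHreduce3} the resulting contracted graph $G'$ lies in $\G(1,m) \cup \G(1,m-1) \cup \G(0,0)$, and we attach the inductive subtree for $G'$ as the unique child. Third, if $G$ is both terminal and $BH$-reduced, then Corollary \ref{Facetheorem} (applicable because $m \geq 1$) forces $G$ to possess a non-facial critical separating cycle $c$; perform the separating cycle division $G \to \{G_1, G_2\}$, which by Lemma \ref{CycleDivision} yields two face graphs in $\G(1,\cdot) \cup \G(0,0)$ whose $H$-face counts are bounded by $m$, hence by $n$, and attach the two inductive subtrees as children.

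For termination, every child has strictly fewer vertices than its parent: edge contraction trivially reduces $|V|$ by one, and for the cycle-division case, because $c$ is non-facial each of the open regions bounded by $c$ in the planar embedding is covered by at least two faces and thus contains at least one vertex of $G$ not on $c$, giving $|V(G_1)|, |V(G_2)| < |V(G)|$. Hence the recursion terminates with all leaves in $\G(0,0)$, as required.

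The step I expect to be the main obstacle is verifying exhaustiveness of the three-case dichotomy, which is precisely the content of Corollary \ref{Facetheorem}: when neither a $TT$ nor a $BH$ contraction is available at an internal node, one must have a non-facial critical separating cycle available for division. All remaining bookkeeping --- preservation of $(3,6)$-tightness of the discus and hole graph, the resulting $H$-face count, and the $m \leq n$ bound --- follows from Lemmas \ref{l:BHreduce3} and \ref{CycleDivision} applied at each reduction step.
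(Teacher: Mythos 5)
Your argument is correct and is essentially the paper's own proof: the paper disposes of this corollary in one line by citing Corollary \ref{Facetheorem}, Lemma \ref{CycleDivision} and Lemma \ref{l:BHreduce3}, which is exactly the three-case recursion you spell out. Your justification that a non-facial critical separating cycle has a vertex strictly on each side is stated a little loosely (two faces in a region does not by itself force an interior vertex; one needs the criticality/sparsity count), but the conclusion is the same one the paper asserts without proof in Theorem \ref{t:oneblockA}, so this is not a departure from the paper's argument.
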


\proof
The statement follows by applying Corollary \ref{Facetheorem} together with Lemma \ref{CycleDivision} and Lemma \ref{l:BHreduce3}.
\endproof

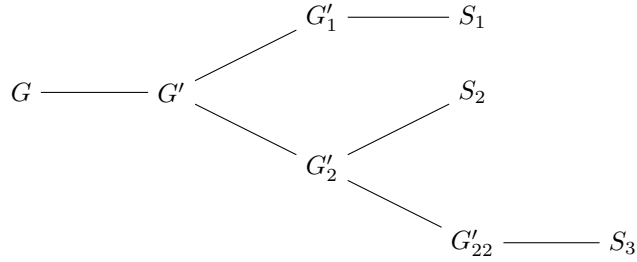
\begin{figure}[ht]
\centering
  \begin{tabular}{  c   }
\begin{tikzpicture}[level distance=2cm,
  level 1/.style={sibling distance=2cm},
  level 2/.style={sibling distance=2cm},
	level 3/.style={sibling distance=2cm},
	level 4/.style={sibling distance=2cm}]
  \node {$G$} [grow=right]
    child {node {$G'$}
      child {node {$G_2'$}
				child{node {$G'_{22}$}
				child{node {$S_3$}}
				}
				child{node {$S_2$}}
			}
      child {node {$G_1'$}
				child{node {$S_1$}}
      }};
\end{tikzpicture}
\end{tabular}

 \caption{Deconstructing a face graph $G\in\G(1,n)$. Each node is obtained from its parent by a $TT$ or $BH$ edge contraction, or, by a critical separating cycle division. Each leaf is contained in $\G(0,0)$.}
\label{ConstructionA}
\end{figure}

In the case of general block and hole graphs one can also perform division at critical cycles, and there are counterparts to Lemma \ref{l:criticalequivalence} and Corollary \ref{c:nocontraction}.
However, as the following example shows, there are face graphs in $\G(m,n)$, $m\geq2$, which are terminal, indivisible and $BH$-reduced.

\begin{example}
Fig.~\ref{f:tnet} shows a face graph $G\in\G(2,6)$ which is terminal, indivisible and $BH$-reduced. Note that the associated  block and hole graphs $\hat{G}$ are $3$-rigid. This follows from the fact that they are constructible from $K_3$ by vertex splitting together with Henneberg degree $3$ and degree $4$ vertex extension moves. 
\end{example}

\begin{center}
\begin{figure}[ht]
\centering
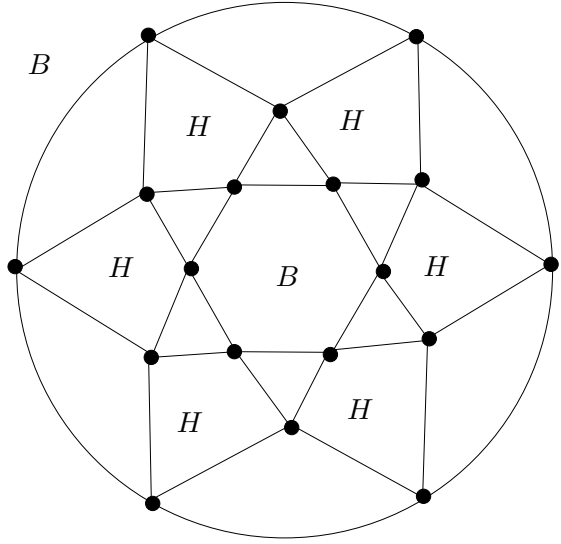
\caption{A face graph in $\G(2,6)$ which is terminal, indivisible and $BH$-reduced.}
\label{f:tnet}
\end{figure}
\end{center}

\subsection{Generic rigidity of block and hole graphs}
\label{ProofMaintheorem}
Let $J$ be a simple graph and let $v$ be a vertex of $J$ with adjacent vertices $v_1,v_2,\ldots, v_n$, $n\geq 2$.
Construct a new graph $\tilde{J}$ from $J$ by,
\begin{enumerate}
\item removing the vertex $v$ and its incident edges from $J$,
\item adjoining two new vertices $w_1,w_2$,
\item adjoining the edge $w_1v_j$ or the edge $w_2v_j$ for each 
$j=3,4,\ldots,n$.
\item adjoining the five edges $v_1w_1,v_2w_1$, $v_1w_2,v_2w_2$ and $w_1w_2$.
\end{enumerate}
The graph move $J\to \tilde{J}$ is called  {\em vertex splitting}. It is shown in \cite{whi-vertex} that if $J$ is minimally $3$-rigid then so too is $\tilde{J}$. (See also the Appendix).






\begin{theorem}\label{t:oneblockA}  
Let $\hat{G}$ be a block and hole graph with a single block. 
Then the following statements are equivalent.
\begin{enumerate}[(i)]
\item $\hat{G}$ is minimally $3$-rigid.
\item $\hat{G}$ is $(3,6)$-tight.
\item $\hat{G}$ is constructible from $K_3$ by the moves of vertex splitting and isostatic block substitution.
\end{enumerate}
\end{theorem}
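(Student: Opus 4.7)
The plan is to establish (i)$\Rightarrow$(ii)$\Rightarrow$(iii)$\Rightarrow$(i). The implication (i)$\Rightarrow$(ii) is the standard Maxwell counting argument: a minimally $3$-rigid graph satisfies $|E|=3|V|-6$ and $(3,6)$-sparsity on every subgraph with at least two edges, since any violation would produce an independent stress. For (iii)$\Rightarrow$(i), I would induct on the length of the construction: $K_3$ is minimally $3$-rigid, vertex splitting preserves minimal $3$-rigidity by Whiteley's theorem (as reproved in the Appendix), and isostatic block substitution preserves it by Lemma \ref{l:graphequivalentsB}.

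The substantive content is (ii)$\Rightarrow$(iii). Let $\hat{G}$ be a $(3,6)$-tight block and hole graph on the face graph $G\in\G(1,n)$. By Corollary \ref{l:graphequivalentsA} the discus and hole graph $G^\dagger$ is also $(3,6)$-tight, and by Lemma \ref{l:graphequivalentsB} any construction of $G^\dagger$ extends to a construction of $\hat{G}$ via one final isostatic block substitution. So it suffices to prove that $G^\dagger$ is constructible from $K_3$. I would proceed by strong induction on $|V(G^\dagger)|$, applying Corollary \ref{c:canconstruct} to produce a reduction of $G$. In the base case $G\in\G(0,0)$, the graph $G^\dagger=G$ is a triangulated sphere, constructible from $K_3$ via one split to $K_4$ followed by successive vertex splits (see the Appendix).

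For the inductive step, suppose first that $G\to G'$ is a contractible $TT$ edge contraction with $G'\in\G(1,m)$. The contracted edge $e=uv$ lies in exactly two triangles $uvw_1,uvw_2$ of $G^\dagger$, and the reversal of the contraction at the discus-and-hole level is precisely the vertex split of the identified vertex of $(G')^\dagger$ with split-neighbours $w_1,w_2$. The $BH$ case is analogous: a contractible $BH$ edge $e=uv$ sits in the two triangles $uvx,uvy$ of $G^\dagger$ formed with the polar vertices $x,y$ of the simplicial discus $B^\dagger$, and its reversal is the vertex split with split-neighbours $x,y$. In both cases the induction hypothesis gives a construction of $(G')^\dagger$, and one further vertex split completes the construction of $G^\dagger$.

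The main obstacle is the critical separating cycle division $G\to\{G_1,G_2\}$ along a proper cycle $c$. By Lemma \ref{CycleDivision} each $G_i$ lies in $\G(1,m_i)$ or $\G(0,0)$, and $G_i^\dagger$ is $(3,6)$-tight and strictly smaller than $G^\dagger$; by induction each $G_i^\dagger$ is constructible from $K_3$, and by the (iii)$\Rightarrow$(i) direction (applied to the smaller graphs) each is minimally $3$-rigid. The plan is now to identify $G_1^\dagger$ as an isostatic block to be substituted into $G_2^\dagger$. Concretely, $G_2^\dagger$ contains the simplicial discus $B_c^\dagger$ on $c$ as its unique block, while $G_1^\dagger$ is minimally $3$-rigid and satisfies $G_1^\dagger\cap G_2=c$, so $G_1^\dagger$ qualifies as an isostatic block on $c$. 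Performing this isostatic block substitution inside the construction of $G_2^\dagger$, replacing $B_c^\dagger$ by $G_1^\dagger$, produces the graph $\mathit{Ext}(c)\cup \mathit{Int}(c)=G^\dagger$, as required. The delicate point I anticipate is verifying that this gluing step reproduces $G^\dagger$ edge-for-edge, which reduces to checking that $B_c^\dagger$ meets the rest of $G_2^\dagger$ exactly along $c$ and that $G_1^\dagger$ meets $\mathit{Int}(c)$ only along $c$; both follow from the construction of $\dagger$ and the planarity of $G$.
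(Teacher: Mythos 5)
Your proposal is correct and follows essentially the same route as the paper: the same reduction to the discus and hole graph $G^\dagger$ via Corollary \ref{l:graphequivalentsA}, the same induction on vertex count driven by Corollary \ref{c:canconstruct}, with $TT$/$BH$ contractions reversed by vertex splitting and critical separating cycle divisions reassembled by substituting $G_1^\dagger=Ext(c)$ for the simplicial discus of $G_2^\dagger$. The only cosmetic difference is that you anchor the induction at the $\G(0,0)$ leaves (Gluck's theorem) rather than at the $4$-cycle face graph, which is immaterial.
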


\proof
The implication $(i)\Rightarrow (ii)$ is well known for general
minimally $3$-rigid graphs. The implication $(iii)\Rightarrow(i)$ follows from the isostatic block substitution principle (Lemma \ref{l:graphequivalentsB}) and the fact that vertex splitting preserves minimal $3$-rigidity (see Appendix).

To prove $(ii)\Rightarrow (iii)$, apply the following induction argument based on the number of vertices of the underlying face graph. 
Let $P(k)$ be the statement that every $(3,6)$-tight block and hole graph $\hat{G}$ with a single block and $|V(G)|=k$ is constructible from $K_3$ by the moves of vertex splitting and isostatic block substitution. Note that if $|V(G)|=4$ then $G$ is a $4$-cycle with one $B$-labelled face and one $H$-labelled face. In this case, every block and hole graph $\hat{G}$ is clearly constructible from $K_3$ by applying a single vertex splitting move to obtain the minimally $3$-rigid graph $K_4$ and then substituting this $K_4$ with the required isostatic block for $\hat{G}$. 
Thus the statement $P(4)$ is true and this establishes the base of the induction.

Now assume that the statement $P(k)$ holds for all $k=4,5,\ldots,l-1$ where $l\geq 5$. Let $\hat{G}$ be a $(3,6)$-tight block and hole graph with a single block and $|V(G)|=l$. By Corollary \ref{l:graphequivalentsA}, the discus and hole graph $G^{\dagger}$ is also $(3,6)$-tight and so $G\in\G(1,n)$ for some $n$.
Thus $G$ admits a $TT$ edge contraction, a $BH$ edge contraction or a critical separating cycle division as described in the reduction scheme for face graphs in $\G(1,n)$ (Corollary \ref{c:canconstruct}). 
In the case of a $TT$ or $BH$ edge contraction $G\to G'$, the contracted face graph $G'$ has fewer vertices than $G$ and is contained in either $\G(1,m)$ for some $m\leq n$, or, in $\G(0,0)$. 
In the former case, the induction hypothesis implies that $(G')^\dagger$ is constructible from $K_3$ by the moves of vertex splitting and isostatic block substitution. In the latter case, $(G')^\dagger$ is the graph of a triangulated sphere and so is constructible from $K_3$ by vertex splitting alone (see Appendix).
It follows that $G^\dagger$ is itself constructible from $K_3$ by  vertex splitting and isostatic block substitution.  
In the case of a critical separating cycle division $G\to \{G_1,G_2\}$, 
$G$ is obtained from two face graphs $G_1$ and $G_2$, each with fewer vertices than $G$. Moreover, for each $j=1,2$ either $G_j\in \G(1,m_j)$ for some $m_j\leq n$, or, $G_j\in \G(0,0)$.
Thus it again follows that both $G_1^\dagger$ and $G_2^\dagger$ are constructible from $K_3$ by vertex splitting and isostatic block substitution. Note that $G_1^\dagger$ is minimally $3$-rigid and so  may be used as a substitute for the isostatic block of $G_2^\dagger$. In this way $G^\dagger$ is shown to be constructible from $K_3$ in the required manner. 
This establishes the inductive step and so the proof of the implication $(ii)\Rightarrow (iii)$ is complete.
\endproof

\section{Girth inequalities}
\label{Sect_GI}
We now examine certain cycle length inequalities for  block and hole graphs that were considered in Finbow-Singh and Whiteley \cite{fin-whi}. Recall from Ex.~\ref{Ex2} that $G^\circ$ denotes the block and whole graph obtained from a face graph $G$ by adjoining $2(|\partial B|-3)$ edges to each $B$-labelled face so that each isostatic block $B^\circ$ is the graph of a triangulated sphere.

\subsection{Index of a collection of labelled faces} 
Let $\B'$ and $\H'$ respectively be collections of $B$-labelled and $H$-labelled faces of a face graph $G$. 
The {\em index} of the collection $\B'\cup \H'$ is defined as,
\[\ind(\B'\cup\H') =   \sum_{B\in \B'} (|\partial B|-3)- \sum_{H\in \H'} (|\partial H|-3)\]

\begin{lemma}
\label{BHTight}
Let $G=S(\B,\H)$ be a face graph of type $(m,n)$.
\begin{enumerate}[(i)]
\item If $\C$ and $\C'$ are two collections of labelled faces of $G$ then, 
\[\ind(\C\cup\C') = \ind(\C)+\ind(\C') - \ind(\C\cap\C').\]
\item $f(G^\circ) = 6-\ind(\B\cup\H)$.
\item If $G^\circ$ satisfies the Maxwell count then, 
\[\ind((\B\cup\H)\backslash \C)=-\ind(\C)\] 
for each collection $\C$ of labelled faces of $G$.
\end{enumerate}
\end{lemma}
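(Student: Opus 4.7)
The plan is to treat parts (i) and (iii) as almost immediate bookkeeping consequences of the definition of $\ind$, and to extract part (ii) from Euler's formula applied to the planar graph $G$ together with a count of the edges adjoined to pass from $G$ to $G^\circ$.

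For part (i), I would rewrite $\ind$ as a single weighted sum over labelled faces, setting $w(F)=|\partial F|-3$ when $F$ is $B$-labelled and $w(F)=-(|\partial F|-3)$ when $F$ is $H$-labelled, so that $\ind(\D)=\sum_{F\in\D}w(F)$ for any collection $\D$ of labelled faces. The identity then reduces to the elementary inclusion-exclusion formula for finite sums, $\sum_{F\in\C\cup\C'}w(F)=\sum_{F\in\C}w(F)+\sum_{F\in\C'}w(F)-\sum_{F\in\C\cap\C'}w(F)$.

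For part (ii), I would compute $f(G)$ by applying Euler's formula to $G$ viewed as a planar graph whose faces are the triangular faces of $G$ together with the labelled faces. Letting $T$ denote the number of triangular faces, edge-face incidences give
\[
2|E(G)|=3T+\sum_{B\in\B}|\partial B|+\sum_{H\in\H}|\partial H|,
\]
while Euler's formula gives $|V(G)|-|E(G)|+T+m+n=2$. Eliminating $T$ yields
\[
f(G)=3|V(G)|-|E(G)|=6+\sum_{B\in\B}(|\partial B|-3)+\sum_{H\in\H}(|\partial H|-3).
\]
By construction of $G^\circ$ (Ex.~\ref{Ex1}), the vertex set of $G^\circ$ coincides with that of $G$ and precisely $2(|\partial B|-3)$ edges are adjoined for each $B\in\B$, so $f(G^\circ)=f(G)-2\sum_{B\in\B}(|\partial B|-3)$. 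Substituting and collecting gives $f(G^\circ)=6+\sum_{H\in\H}(|\partial H|-3)-\sum_{B\in\B}(|\partial B|-3)=6-\ind(\B\cup\H)$, as required.

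Part (iii) will then follow by combining (i) and (ii). The Maxwell count $f(G^\circ)=6$ translates via (ii) into $\ind(\B\cup\H)=0$. Given a collection $\C\subseteq\B\cup\H$, set $\C'=(\B\cup\H)\setminus\C$. Since $\C\cap\C'=\emptyset$ and $\ind(\emptyset)=0$, applying (i) gives $\ind(\B\cup\H)=\ind(\C)+\ind(\C')$, and so $\ind(\C')=-\ind(\C)$.

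No part of this is genuinely difficult; the only point to be careful about is in (ii), where one must ensure that the non-labelled faces of the planar graph $G$ are all triangles (immediate from the definition of a face graph as a triangulated sphere with the edge interiors of simplicial discs removed) and that no vertices are introduced when the simplicial discus $B^\circ$ is substituted for each $B$-labelled face.
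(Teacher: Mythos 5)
Your proposal is correct and follows essentially the same route as the paper: parts (i) and (iii) are the same bookkeeping, and part (ii) is the same elementary Euler-count argument, the only cosmetic difference being that the paper compares $|E(G^\circ)|$ directly with $|E(S)|$ for the underlying triangulated sphere $S$ (using $f(S)=6$), whereas you apply Euler's formula to $G$ itself and then adjust for the $2(|\partial B|-3)$ adjoined edges. Both versions rest on the same implicit facts you rightly flag, namely that the unlabelled faces of $G$ are triangles and that passing from $G$ to $G^\circ$ introduces no new vertices.
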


\proof
$(i)$ This follows by simple counting.

$(ii)$ The face graph $G$ is obtained from the graph of a triangulated sphere $S$. By construction, 
\[|E(G^\circ)|=|E(S)| +\ind(\B\cup\H).\]
Moreover, $S$ and $G^\circ$ have the same vertex set and so,
\[f(G^\circ) = 3|V(G^\circ)|-|E(G^\circ)|=f(S) - \ind(\B\cup\H).\]
The graph of a triangulated sphere $S$ must satisfy the Maxwell count  and so the result follows.

$(iii)$
Let $\C$ be a collection of labelled faces of $G$. 
By $(i)$, 
\[\ind(\B\cup\H) = \ind(\C)+\ind((\B\cup\H)\backslash \C).\]
If $G^\circ$ satisfies the Maxwell count then, by $(ii)$, $\ind(\B\cup\H)=0$ and so the result follows. 
\endproof

\begin{definition}
A face graph $G$ is said to satisfy the {\em girth inequalities} if, for every proper cycle $c$ in $G$, and every planar realisation of $G$,
\[
|c|\geq  |\ind(\C)|+3\]
where $\C$ is the collection of $B$-labelled and $H$-labelled faces of $G$ which lie inside $c$.
\end{definition}

A block and hole graph $\hat{G}$ satisfies the girth inequalities if it is derived from a face graph $G$ which satisfies the girth inequalities.

\begin{example}
Let $G$ be a face graph of type $(1,1)$, so that $G$ has exactly one $B$-labelled face and exactly one $H$-labelled face. 
Then $G$ satisfies the girth inequalities if and only if the lengths of the boundaries of the $B$-labelled face and the $H$-labelled face are equal and, letting $r$ denote this common boundary length, 
every proper cycle in $G$ which winds around $H$ has length at least $r$. 
\end{example}

\begin{lemma}
\label{GILemmaA}
Let $G$ be a face graph of type $(m,n)$.
If $G$ satisfies the girth inequalities then $G^\circ$ satisfies the Maxwell count.
\end{lemma}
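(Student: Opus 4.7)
The plan is to use Lemma~\ref{BHTight}(ii) to reduce the assertion to showing $\ind(\B\cup \H)=0$, since $f(G^\circ)=6-\ind(\B\cup\H)$. Write $I:=\ind(\B\cup\H)$.

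The key idea is that any proper cycle $c$ in $G$ partitions the labelled faces (in a given planar realisation) into two subcollections $\C_1,\C_2$ lying on either side of $c$, and Lemma~\ref{BHTight}(i) gives the additive identity $I=\ind(\C_1)+\ind(\C_2)$ since $\C_1\cap\C_2=\emptyset$. By swapping which face of the realisation is unbounded (or equivalently by applying the hypothesis to both planar realisations of $G$), the girth inequality supplies \emph{both} bounds
\[
|c|-3 \;\geq\; |\ind(\C_1)|, \qquad |c|-3 \;\geq\; |\ind(\C_2)|.
\]

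The decisive step is to specialise $c$ to the boundary of a single non-triangular face $F$, so that one side is the singleton $\{F\}$. If $F$ is a $B$-labelled face, then $\ind(\{F\})=|\partial F|-3$ saturates its own girth inequality, and the bound on the complementary side reads $|I-(|\partial F|-3)|\leq |\partial F|-3$, which forces $0\leq I\leq 2(|\partial F|-3)$, in particular $I\geq 0$. Symmetrically, if $F$ is an $H$-labelled face then $\ind(\{F\})=-(|\partial F|-3)$ and the same computation yields $-2(|\partial F|-3)\leq I\leq 0$, in particular $I\leq 0$. When $G$ has a labelled face of each type the two bounds combine to give $I=0$, as required.

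Disposing of the degenerate cases is routine bookkeeping. If $G$ has no non-triangular faces then $G=G^\circ$ is a triangulated sphere and the Maxwell count is Euler's formula. If the only non-triangular faces share one label while some triangular face remains in $G$, applying the girth inequality to that triangle immediately forces $|I|\leq 0$. The main (and essentially only) obstacle I anticipate is sign-bookkeeping: making sure the partition $\{\C_1,\C_2\}$ is identified with the correct ``inside'' for each choice of planar realisation so that both inequalities above are legitimately in hand. Once that is set up, the proof is a direct extraction from the definitions together with Lemma~\ref{BHTight}.
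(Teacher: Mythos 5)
Your proof is correct and is essentially the paper's own argument: both reduce via Lemma~\ref{BHTight}(ii) to showing $\ind(\B\cup\H)=0$, and both obtain the two opposite bounds on the index by applying the girth inequality to the boundary cycle of a single $B$-labelled face and of a single $H$-labelled face, with the complementary collection of labelled faces taken as the inside. The paper tacitly assumes $m,n\geq 1$ (it simply ``chooses'' a face of each label) and omits your degenerate-case discussion.
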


\begin{proof} 
By Lemma \ref{BHTight}$(ii)$ it is sufficient to show that $\ind(\B\cup \H)=0$.
Choose any $H$-labelled face $H_1$ in $G$ and let $\C=(\B\cup\H)\backslash\{H_1\}$. 
Applying the girth inequalities,
\[\ind(\B\cup\H) = \ind(\C)  - (|\partial H_1|-3)  
\leq |\ind(\C)|  - (|\partial H_1|-3)  
\leq 0.\]
To obtain the reverse inequality, choose any $B$-labelled face $B_1$ in $G$ and let $\C'=(\B\cup\H)\backslash\{B_1\}$. 
By the girth inequalities, 
\[\ind(\B\cup\H) = (|\partial B_1|-3) + \ind(\C')  
\geq |\ind(\C')| + \ind(\C') 
\geq 0.\]
\end{proof}

\begin{prop}
\label{l:girth}
Let $c$ be a proper cycle in a face graph $G$ of type $(m,n)$
and let  $\C$ be a  collection of labelled faces of $G$ which lie inside $c$ for some planar realisation of $G$.
\begin{enumerate}[(i)]
\item If $G^\circ$ is simple and $(3,6)$-sparse then $|c|\geq \ind(\C)+3$.
\item If $G^\circ$ is simple and $(3,6)$-tight  then  $|c|\geq |\ind(\C)\,|+3$.
\end{enumerate}
In particular, if $G^\circ$ is simple and $(3,6)$-tight then $G$ satisfies  the girth inequalities.
\end{prop}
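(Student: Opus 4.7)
The plan is to compute the freedom number of a natural interior subgraph $K$ of $G^\circ$ associated to $c$, and derive the inequalities by comparing with $(3,6)$-sparsity. Fix the given planar realisation of $G$ and let $K$ be the subgraph of $G^\circ$ spanned by the vertices of $G$ lying on or inside $c$. Equivalently, $K$ arises from the triangulated disc $D$ of $S$ bounded by $c$ by (a) deleting, for each labelled face $L\in \C$, the edge interior and the interior vertices of the simplicial disc corresponding to $L$, and then (b) inserting the $2(|\partial B|-3)$ extra edges of $B^{\circ}$ for each $B$-labelled face $B\in\C$.

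Using the standard formula $|E|=2r+3v-3$ for a triangulated disc with $r$ boundary vertices and $v$ interior vertices (which I would derive from Euler's formula and the edge-face incidence count), I would compute $|V(K)|$ and $|E(K)|$ by applying this formula to $D$, subtracting the analogous counts for each sub-disc removed in step (a), and adding $2(|\partial B|-3)$ edges in step (b). The $3v$ contributions from interior vertices of labelled discs cancel against the $v$ vertex deletions; after collecting terms one obtains
\[
f(K)=|c|+3-\ind(\C).
\]

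For (i), the graph $K$ has at least two edges (since $|c|\geq 3$), so $(3,6)$-sparsity of $G^{\circ}$ gives $f(K)\geq 6$, hence $|c|\geq \ind(\C)+3$. For (ii), apply (i) to the cycle $c$ using the planar realisation obtained by swapping the inside and outside of $c$ (available since $G$ embeds on the sphere). In that realisation the collection of labelled faces inside $c$ is $\C':=(\B\cup\H)\setminus\C$, and since $G^{\circ}$ is $(3,6)$-tight it satisfies the Maxwell count, so by Lemma \ref{BHTight}(iii), $\ind(\C')=-\ind(\C)$. Part (i) then gives $|c|\geq -\ind(\C)+3$, and combining the two bounds yields $|c|\geq |\ind(\C)|+3$. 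The final sentence of the proposition is then just the definition of the girth inequalities.

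The main work is the freedom-number calculation for $K$; everything else is a short application of Lemma \ref{BHTight} together with the duality between inside and outside of $c$. The bookkeeping obstacle is tracking exactly which vertices and edges are removed when passing from $S$ to $G$ by excising the edge interiors of the labelled simplicial discs, but since the discs in $\C$ are pairwise internally disjoint this summation is additive and routine.
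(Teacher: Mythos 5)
Your proposal is correct and follows essentially the same route as the paper: both compute the freedom number of the subgraph of $G^\circ$ lying on one side of $c$ (the paper phrases this as the net edge change relative to the triangulated disc $D_1$ with $f(D_1)=|c|+3$, you compute $f(K)=|c|+3-\ind(\C)$ directly), apply $(3,6)$-sparsity to get part (i), and obtain part (ii) by running the same argument on the complementary side together with Lemma \ref{BHTight}(iii). The only cosmetic caveat is that your ``subgraph spanned by the vertices on or inside $c$'' may contain extra edges beyond the disc construction you actually count; the argument should be run with the disc-construction subgraph itself, which is what you in fact do.
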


\proof
Let $S$ be the graph of a triangulated sphere and let $c$ be a proper cycle of edges of length greater than $3$.
Then $c$ determines two simplicial  discs $D_1$ and $D_2$ with intersection equal to $c$. Since each simplicial disc may be completed to the graph of a triangulated sphere by the addition of $|c|-3$ edges it  follows that,
\[
f(D_1)=f(D_2) = 6+(|c|-3).
\]
Suppose a graph $K_1$ is derived from $D_1$ by keeping the same vertex set and subtracting and adding various edges. Then $K_1$ will fail the sparsity count $f(K_1)\geq 6$ if the total change in the number of edges is an increase by more than $|c|-3$ edges.

Consider now the face graph $G$ and suppose it is derived from the graph of a triangulated sphere $S$.
Fix a planar representation of $G$ and let $c$ be a proper cycle in $G$. As in the previous paragraph, $c$ determines two simplicial discs $D_1$ and $D_2$ in $S$. Without loss of generality, assume that $D_1$ contains the edges of $S$ which lie inside $c$ and $D_2$ contains the edges which lie outside $c$. Let $K_1$ and $K_2$ be the corresponding subgraphs of the block and hole graph $G^\circ$. Thus $K_1$ and $K_2$ are derived from $D_1$ and $D_2$ respectively by removing edges which correspond to $H$-labelled faces in $G$ and adjoining the edges of each isostatic block.

$(i)$ If $G^\circ$ is $(3,6)$-sparse then $f(K_1)\geq 6$.
Thus the total change in the number of edges in deriving $K_1$ from $D_1$ does not exceed $|c|-3$ in magnitude.
This implies the inequality $|c|-3\geq \ind(\C)$.

$(ii)$
Applying the argument for $(i)$ to $K_2$, $f(K_2)\geq 6$ and so the total change in the number of edges in deriving $K_2$ from $D_2$ does not exceed $|c|-3$. Thus,
\[|c|-3\geq \ind((\B\cup\H)\backslash \C).\]
By Lemma \ref{BHTight}, $\ind((\B\cup\H)\backslash \C) ) = - \ind(\C)$ and so $|c|-3\geq |\ind(\C)|$.
\endproof








\subsection{Critical girth cycles}


\begin{definition}
A proper cycle $c$ in a face graph $G$ is called a \emph{critical girth cycle} for $G$ if, for some planar realisation of $G$,
\[
|c| =  |\ind(\C)|+3
\]
where $\C$ is the collection of $B$-labelled and $H$-labelled faces of $G$ which lie inside $c$.
\end{definition}

Recall from Def.~\ref{d:critcycle} the definition of a critical separating cycle for a face graph.

\begin{lemma}\label{l:critequivalence}
Let $G$ be a face graph of type $(m,n)$ and suppose the block and hole graphs for $G$ satisfy the Maxwell count.
If $c$ is a proper cycle in $G$ then the following statements are equivalent.
\begin{enumerate}[(i)]
\item $c$ is a critical girth cycle for $G$.
\item Either $Ext(c)$ or $Int(c)$ satisfies the Maxwell count.
\end{enumerate}
In particular, if $G\in \G(m,n)$ then $c$ is a critical girth cycle if and only if it is a critical separating cycle. 
\end{lemma}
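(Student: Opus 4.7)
My plan is to compute $f(Ext(c))$ and $f(Int(c))$ in closed form in terms of $|c|$ and $\ind(\C)$, and then observe that the critical girth equality is precisely the statement that one of these freedom numbers equals $6$.

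Since any two isostatic blocks on the same boundary are $(3,6)$-tight, the freedom number of a block and hole graph on a face graph is independent of the choice of isostatic blocks used (this is the freedom-number refinement of Lemma \ref{MaxSubs}, obtained by the same inclusion-exclusion formula). Consequently $f(Ext(c))$ and $f(Int(c))$ may be computed using $G_1^\circ$ and $G_2^\circ$ rather than $G_1^\dagger$ and $G_2^\dagger$. Writing $\C$ for the collection of labelled faces of $G$ lying inside $c$ and $\D=(\B\cup\H)\setminus\C$ for the rest, the labelled face sets of $G_1$ and $G_2$ are $\D\cup\{H_c\}$ and $\C\cup\{H_c\}$, where $H_c$ is the new $H$-labelled face bounded by $c$, contributing $-(|c|-3)$ to the index. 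In the degenerate case $|c|=3$ no such face is adjoined, but $|c|-3=0$, so the formulas below remain uniform. Applying Lemma \ref{BHTight}(ii) together with the Maxwell hypothesis and Lemma \ref{BHTight}(iii) (which yields $\ind(\D)=-\ind(\C)$), I obtain
\begin{align*}
f(Ext(c)) &= 6+\ind(\C)+(|c|-3),\\
f(Int(c)) &= 6-\ind(\C)+(|c|-3).
\end{align*}

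From these identities, $f(Int(c))=6$ is equivalent to $\ind(\C)=|c|-3$, while $f(Ext(c))=6$ is equivalent to $\ind(\C)=-(|c|-3)$. Combining the two, (ii) is equivalent to $|c|-3=|\ind(\C)|$, which is precisely (i). For the ``in particular'' clause, if $G\in\G(m,n)$ then $G^\dagger$ is $(3,6)$-tight, so every subgraph of $G^\dagger$ (in particular $Ext(c)$ and $Int(c)$) is automatically $(3,6)$-sparse; hence for these subgraphs the Maxwell count is equivalent to $(3,6)$-tightness, and (ii) coincides with the definition of a critical separating cycle.

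I anticipate no serious obstacle. The main book-keeping is to handle the $|c|=3$ case uniformly (no face is adjoined, but the index contribution is also zero) and to justify that substituting one isostatic block by another on the same boundary preserves the freedom number, both of which follow routinely from the inclusion-exclusion computation underlying Lemma \ref{MaxSubs}.
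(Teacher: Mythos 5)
Your proof is correct and takes essentially the same route as the paper: both compute $f(Ext(c))$ and $f(Int(c))$ from the index formula of Lemma \ref{BHTight} (using the fact that the freedom number is independent of the choice of isostatic blocks, via the inclusion--exclusion of Lemma \ref{MaxSubs}) and observe that the Maxwell count for one of the two pieces is exactly the girth equality $|c|-3=|\ind(\C)|$. Your two displayed identities carry the correct signs, and your explicit handling of the ``in particular'' clause via $(3,6)$-sparsity of subgraphs of $G^\dagger$ is a sound way to finish.
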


\begin{proof}
Fix a planar realisation for $G$ and let $\B'\cup\H'$ be the labelled faces of $G$ which lie inside $c$. Let $G_1$ be the face graph obtained from $G$ by removing  edges and vertices which are interior to $c$ and, if $|c|\geq4$, labelling the face with boundary $c$ by $H$. 
Then $f(G_1^\circ) = f(G^\circ)-\ind(\B'\cup\H') + (|c|-3)$.
It follows that $G_1^\circ$ satisfies the Maxwell count if and only if $|c|=\ind(\B'\cup\H')+3$.
Similarly, let $G_2$ be the face graph obtained from $G$ by removing  edges and vertices which are exterior to $c$ and, if $|c|\geq4$, labelling the face with boundary $c$ by $H$. 
Then, by Lemma \ref{GILemmaA}$(iii)$, 
$G_2^\circ$ satisfies the Maxwell count if and only if $|c|=-\ind(\B'\cup\H')+3$. Thus, $c$ is a critical girth cycle if and only if either $G_1^\circ$ or $G_2^\circ$ satisfies the Maxwell count. The result now follows from Corollary \ref{l:graphequivalentsA}.
\end{proof}

\subsection{One block and $n$ holes}
From the arguments of \cite{fin-whi} it follows that a block and hole graph with a single block and a single hole is $(3,6)$-tight if and only if the underlying face graph satisfies the girth inequalities.
In Theorem \ref{l:girth1block} this equivalence is extended to the case of block and hole graphs with a single block and $n$ holes for any $n\geq 1$.


\begin{lemma}
\label{GILemmaB}
Let $G\to G'$ be a $TT$ edge contraction or a $BH$ edge contraction on a face graph $G$ of type $(1,n)$.
If $G$ satisfies the girth inequalities and contains no critical girth cycles, other than boundary cycles, then $G'$ satisfies the girth inequalities.
\end{lemma}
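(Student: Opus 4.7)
The plan is to lift an arbitrary proper cycle $c'$ in $G'$ (with a fixed planar realisation of $G'$) back to a proper cycle $c$ in $G$, and deduce the girth inequality for $c'$ from the corresponding inequality for $c$. Write $e=uv$ for the contracted edge and $v'$ for the identified vertex. Edges of $c'$ not meeting $v'$ lift uniquely, while the (at most) two edges of $c'$ at $v'$ lift to edges of $G$ incident to $u$ or $v$. I would choose a lift $c$ of minimal length: either $|c|=|c'|$ (when the two edges at $v'$ can both be taken through the same endpoint $u$ or $v$) or $|c|=|c'|+1$ (when the lift is forced to traverse the edge $e$).

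Next I would use the planar realisation of $G$ obtained by inverting the contraction to define $\C$, the collection of labelled faces of $G$ inside $c$, and compare $\ind(\C)$ to $\ind(\C')$. For a $TT$ contraction, labelled faces have preserved boundaries, so $\ind(\C)=\ind(\C')$. For a $BH$ contraction, only the boundaries of the unique $B$-face and the $H$-face $H_e$ adjacent to $e$ shorten, each by one. If $|c|=|c'|$ then $c$ avoids $e$, so $B$ and $H_e$ lie on the same side of $c$; either both or neither are in $\C$, and the changes in the $B$ and $H_e$ contributions (entering with opposite signs in $\ind$) cancel, giving $\ind(\C)=\ind(\C')$. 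If $|c|=|c'|+1$ then $e\in c$ and $c$ separates $B$ from $H_e$, so exactly one lies in $\C$ and $\ind(\C')=\ind(\C)\pm 1$.

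The short-lift case ($|c|=|c'|$) is immediate: $|c'|=|c|\geq |\ind(\C)|+3=|\ind(\C')|+3$ from the girth inequality for $c$. In the long-lift $TT$ case, $c$ contains the $TT$ edge $e$ and has length at least four, so $c$ cannot bound a triangular face (length $3$) nor a labelled face (whose boundary carries no $TT$ edge). Hence $c$ is not a boundary cycle and, by the hypothesis on $G$, not a critical girth cycle, so the girth inequality for $c$ is strict. This gives $|c|\geq |\ind(\C)|+4$ and hence $|c'|=|c|-1\geq |\ind(\C')|+3$.

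The main obstacle is the long-lift $BH$ case, where $\ind(\C')=\ind(\C)\pm 1$ threatens to increase $|\ind|$ when the sign is unfavourable. I would overcome this by a Maxwell-count estimate. By Lemma~\ref{GILemmaA}, $G^\circ$ satisfies the Maxwell count, which for a type $(1,n)$ face graph reads $\sum_{H\in\H}(|\partial H|-3)=|\partial B|-3$. Suppose WLOG that $B$ lies inside $c$ and $H_e$ outside, so that $B\in\C$ and $H_e\notin\C$. Then
\[
\ind(\C)=(|\partial B|-3)-\sum_{H\in\C\cap\H}(|\partial H|-3)\geq (|\partial B|-3)-\sum_{H\in\H\setminus\{H_e\}}(|\partial H|-3)=|\partial H_e|-3\geq 1,
\]
so $|\ind(\C)|=\ind(\C)$, hence $\ind(\C')=\ind(\C)-1\geq 0$ and $|\ind(\C')|=|\ind(\C)|-1$. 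The girth inequality for $c$ then yields $|c'|=|c|-1\geq |\ind(\C)|+2=|\ind(\C')|+3$. The symmetric subcase ($H_e$ inside $c$) follows by the same Maxwell estimate with signs reversed, forcing $\ind(\C)\leq -1$ and giving the same conclusion.
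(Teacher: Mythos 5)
Your proof is correct and follows the same strategy as the paper's: lift the cycle $c'$ to a proper cycle $c$ in $G$ (of length $|c'|$ or $|c'|+1$), track how the index of the interior face collection changes under the contraction, and convert the girth inequality for $c$ into the one for $c'$. The one point where you go beyond the paper is the long-lift $BH$ case: the paper dismisses it with the same appeal to non-criticality used for the $TT$ case, which does not literally apply when the lift $c$ is itself a boundary cycle (for instance $\partial B$ or $\partial H_e$, both of which contain $e$ and are critical girth cycles exempted by the hypothesis). Your Maxwell-count sign argument, which pins down the sign of $\ind(\C)$ and shows $|\ind(\C')|=|\ind(\C)|-1$ so that the non-strict girth inequality for $c$ already suffices, closes that case cleanly and is a genuine improvement in rigour over the published argument.
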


\proof
If $G'$ is obtained from $G$ by contracting a $TT$ edge $e$
then this contraction does not alter the boundary of any labelled face of $G$. 
If $G'$ is obtained from $G$ by contracting a $BH$ edge $e$ then this contraction reduces by one the boundary lengths of the $B$-labelled face and some $H$-labelled face $H_1$. 
All other labelled faces of $G$ are unchanged. 
Let $c'$ be a proper cycle in $G'$. Then there is a proper cycle $c$ in $G$ such that either $c=c'$, or, $c'$ is obtained from $c$ by contracting the edge $e$. 
If $e$ is an edge of $c$ then $B_1$ and $H_1$ must lie in complementary regions of the complement of $c$.
Thus the index of the exterior and interior labelled faces for $c$ are, respectively, reduced and increased by one.
If $e$ is not an edge of $c$ then the $B$ and $H$ labelled faces both lie either inside or outside $c$.
Thus the index of the exterior and interior labelled faces for $c$ are unchanged. 
Since $c$ is not a critical girth cycle in $G$, in each of these cases the girth inequality is satisfied by $c'$.
\endproof

\begin{lemma}
\label{GILemmaC}
Let $G$ be a face graph of type $(1,n)$ and let $G\to \{G_1,G_2\}$ be a separating cycle division on a critical girth cycle $c$ in $G$.
If $G$ satisfies the girth inequalities then $G_1$ and $G_2$ both satisfy the girth inequalities.
\end{lemma}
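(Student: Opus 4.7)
The plan is to verify the girth inequality for every proper cycle in each of $G_1$ and $G_2$ by viewing such cycles back inside $G$ and exploiting the criticality identity $|c|=|\ind(\C)|+3$ to reconcile the labels enclosed on the two sides of the surgery. I would fix a planar realisation of $G$ with the unique $B$-labelled face as the unbounded face and take the induced realisations of $G_1$ (with the original $B$-face still outside and a new interior $H$-face $H_c$) and of $G_2$ (with the new $B$-face $B_c$ outside). I'll treat the case $|c|\geq 4$; the case $|c|=3$ is simpler since no new labelled face is created and $\C=\emptyset$. The key preliminary observation is that $\C$ consists exclusively of $H$-labelled faces (since the single $B$-face of $G$ lies outside $c$), so $\ind(\C)\leq 0$, and criticality gives $\ind(\C)=-(|c|-3)=\ind(\{H_c\})$.

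For $G_2$, every proper cycle $c''$ in $G_2$ is also a proper cycle in $G$ (since $E(G_2)\subseteq E(G)$) and lies in the closed region enclosed by $c$ in $G$'s realisation. If $c''=c$, the girth inequality is the critical equation itself. Otherwise the labelled faces of $G_2$ inside $c''$ form a subset of $\C$, identical to the labelled faces of $G$ inside $c''$; thus $\ind$ agrees, and the girth inequality for $c''$ in $G_2$ follows from that for $c''$ in $G$.

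For $G_1$, every proper cycle $c'$ is again a proper cycle in $G$. If $c'=c$, the inside in $G_1$ is just $\{H_c\}$ and the inequality becomes $|c|\geq(|c|-3)+3$, with equality. If $c'\neq c$ and $c'$ does not enclose $H_c$, then the labelled faces of $G_1$ inside $c'$ are a subset of $\H\setminus\C$, the same as the labelled faces of $G$ inside $c'$, so the girth inequality transfers directly. If $c'\neq c$ and $c'$ encloses $H_c$, then the $G_1$-labels inside $c'$ are $\{H_c\}\cup\C'_1$ with $\C'_1\subseteq\H\setminus\C$, while the $G$-labels inside $c'$ are $\C\cup\C'_1$; by Lemma \ref{BHTight}(i) and the identity $\ind(\{H_c\})=\ind(\C)$ above,
\[
\ind(\{H_c\}\cup\C'_1)=\ind(\{H_c\})+\ind(\C'_1)=\ind(\C)+\ind(\C'_1)=\ind(\C\cup\C'_1),
\]
so again the girth inequality for $c'$ in $G_1$ coincides with the girth inequality for $c'$ in $G$.

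The main obstacle will be the planarity bookkeeping in the $G_1$ case: cleanly distinguishing the two positions of $c'$ relative to $H_c$, and verifying that the label set $\C'_1=\C_G\setminus\C$ is correctly identified on both sides of the surgery. Once this is done, the critical girth equation is precisely the compensation needed for the index shift, so the whole argument reduces to two index computations plus invocation of the girth inequality for $G$. A small subtlety worth flagging is that the definition of the girth inequality quantifies over both "insides" of a proper cycle; to cover both choices it suffices to run the same argument with $\C$ replaced by its complement (noting via Lemma \ref{GILemmaA} and Lemma \ref{BHTight}(iii) that the Maxwell count holds for $G$, and hence the symmetric inequality is equivalent).
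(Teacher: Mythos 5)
Your proposal is correct and follows essentially the same route as the paper: fix a planar realisation, use the criticality identity $|c|-3=-\ind(\C)=-\ind(\{H_c\})$ (valid since the single $B$-face lies outside $c$, so $\ind(\C)\le 0$) to show that the index of the faces enclosed by any pulled-back cycle is unchanged by the surgery, and then transfer the girth inequality from $G$, checking both sides of each cycle via the Maxwell count. The paper's proof is just a terser version of the same index bookkeeping, handling the "outside" case by the explicit identity $\ind(\C_1')=-\ind(\D)$ rather than your appeal to the complement argument.
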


\proof
Let $\C$ denote the collection of labelled faces of $G$ which lie inside $c$.
Evidently, $\ind(\C)\leq 0$ and so, since $c$ is a critical girth cycle in $G$, $|c|-3= -\ind(\C)$. Moreover, by Lemma \ref{GILemmaA}, $G^\circ$ satisfies the Maxwell count and so, by Lemma \ref{BHTight}, $|c|-3 = \ind((\B\cup\H)\backslash\C)$. 
If $c_1$ is a proper cycle in $G_1$ then $c_1$ is also a proper cycle in $G$.
Let $\D$ denote the collection of labelled faces of $G$ which lie inside $c_1$ and let $\C_1$ denote the collection of labelled faces of $G_1$ which lie inside $c_1$.
Since $|c|-3= -\ind(\C)$, it follows that $\ind(\D) = \ind(\C_1)$.
Since $G$ satisfies the girth inequalities, 
$|c_1|\geq |\ind(\D)|+3=|\ind(\C_1)|+3$.
If $\C_1'$ denotes the labelled faces of $G_1$ which lie 
outside $c_1$ then, again since $|c|-3= -\ind(\C)$, it follows that $\ind(\C_1') = -\ind(\D)$.
Thus,  $|c_1|\geq |\ind(\C_1')|+3$ and so $G_1$ satisfies the girth inequalities.
Similarly, if $c_2$ is a proper cycle in $G_2$ then $c_2$ is also a proper cycle in $G$ and, since $|c|-3 = \ind((\B\cup\H)\backslash\C)$,
it follows that 
$G_2$ satisfies the girth inequalities.

\endproof





The following theorem completes the proof of Theorem \ref{t:oneblock} in the single block case.

\begin{theorem}\label{l:girth1block}
Let $\hat{G}$ be a block and hole graph with a single block. 
Then the following are equivalent.
\begin{enumerate}[(i)]
\item $\hat{G}$ is minimally $3$-rigid.
\item $\hat{G}$ satisfies the girth inequalities.
\end{enumerate}
\end{theorem}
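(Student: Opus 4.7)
The strategy is to invoke Theorem \ref{t:oneblockA}, which reduces the problem to showing that, in the single block case, $G^\dagger$ is $(3,6)$-tight if and only if $G$ satisfies the girth inequalities.

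For $(i)\Rightarrow(ii)$, assume $\hat{G}$ is minimally $3$-rigid. By Theorem \ref{t:oneblockA} and Corollary \ref{l:graphequivalentsA}(ii), $G^\dagger$ is $(3,6)$-tight. Given a proper cycle $c$ in $G$ with a fixed planar embedding, let $\C$ be the collection of labelled faces inside $c$ and let $\C'$ be the collection on the \emph{block side} of $c$. I would consider the subgraph $K$ of $G^\dagger$ consisting of the simplicial disc of the underlying triangulated sphere on the block side, with the edge interiors of the faces in $\C'$ removed, together with the simplicial discus of $B$. Since a simplicial disc with boundary $c$ has freedom number $|c|+3$, a direct count gives $f(K)=|c|+3-\ind(\C')$. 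The $(3,6)$-sparsity bound $f(K)\ge 6$, combined with the identity $\ind(\C')=-\ind(\C)$ from Lemma \ref{BHTight}(iii), then yields the girth inequality $|c|\ge 3+|\ind(\C)|$.

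For $(ii)\Rightarrow(i)$, I would proceed by strong induction on $|V(G)|$, showing that the girth inequalities force $G\in\G(1,n)$ so that Theorem \ref{t:oneblockA} supplies minimal $3$-rigidity. The base cases are $G=K_3$ and the $4$-cycle with one $B$-face and one $H$-face. In the inductive step, Lemma \ref{GILemmaA} supplies the Maxwell count, so by Lemma \ref{l:critequivalence} the critical girth cycles are precisely the proper cycles $c$ for which $Ext(c)$ or $Int(c)$ satisfies the Maxwell count. A case analysis parallel to that of Corollary \ref{Facetheorem} then shows that at least one of the following reductions is always available: a $TT$-edge contraction, a $BH$-edge contraction, or a separating cycle division on a non-facial critical girth cycle. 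By Lemmas \ref{GILemmaB} and \ref{GILemmaC}, each reduction produces face graph(s) with strictly fewer vertices that still satisfy the girth inequalities, and so by induction the corresponding block and hole graphs are minimally $3$-rigid. Lifting back to $\hat{G}$: an edge contraction inverts to a Whiteley vertex split in the discus and hole graph (preserving minimal $3$-rigidity), while a cycle division $G\to\{G_1,G_2\}$ is reversed by using the inductively minimally $3$-rigid graph $\hat{G}_1$ as an isostatic block at $c$ inside $\hat{G}_2$, with minimal $3$-rigidity of the resulting $\hat{G}$ following from Lemma \ref{l:graphequivalentsB}.

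The principal obstacle is the ``reduction move exists'' step in the induction, since Corollary \ref{Facetheorem} and Lemma \ref{CritSep3Cycle} were formally stated under the assumption $G\in\G(1,n)$, i.e.\ full $(3,6)$-tightness of $G^\dagger$, while girth inequalities only directly deliver the Maxwell count. The argument goes through because Proposition \ref{FaceProp}(i) already uses only Maxwell, and Lemma \ref{CritSep3Cycle} can be replaced by the direct observation that any non-facial $3$-cycle $c$ in a face graph satisfying the girth inequalities must have $\ind(\C_{\mathrm{in}})=0$ (so that $|c|=3=|\ind(\C_{\mathrm{in}})|+3$) and is therefore itself a critical girth cycle. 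Consequently, in the absence of non-facial critical girth cycles every $TT$- or $BH$-edge is contractible; and if no $TT$- or $BH$-edges exist, Proposition \ref{FaceProp}(i) produces a non-facial critical girth cycle, contradicting that absence. This closes the case analysis and completes the induction.
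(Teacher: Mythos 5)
Your proposal is correct and follows essentially the same route as the paper: the forward direction is the specialisation of Proposition \ref{l:girth} to the single block case (your one-sided count on $G^\dagger$ is a harmless variant of the paper's count on $G^\circ$), and the converse is the paper's induction, using Lemma \ref{GILemmaA} to get the Maxwell count, Proposition \ref{FaceProp} and Lemma \ref{l:critequivalence} to produce a reduction move, Lemmas \ref{GILemmaB} and \ref{GILemmaC} to preserve the girth inequalities, and vertex splitting together with Lemma \ref{l:graphequivalentsB} to lift rigidity back. Your closing observation --- that the argument must run on the Maxwell count rather than full $(3,6)$-tightness, with non-facial $3$-cycles handled directly as critical girth cycles --- is exactly the point the paper's proof relies on.
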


\begin{proof}
If $\hat{G}$ is minimally $3$-rigid then, by the isostatic block substitution principle, Lemma \ref{l:graphequivalentsB}, $G^\circ$ is minimally $3$-rigid for any choice of triangulated sphere $B^\circ$. 
In particular, $G^\circ$ is 
$(3,6)$-tight and so, by Proposition \ref{l:girth}, $G$ satisfies the girth inequalities.

To prove the converse, apply the following induction argument.
Let $P(k)$ be the statement that every block and hole graph $\hat{G}$ with a single block which satisfies the girth inequalities and has $|V(G)|=k$, is minimally $3$-rigid.
The statement $P(4)$ is true since in this case there exists only one face graph $G$, namely a $4$-cycle with one $B$-labelled face and one $H$-labelled face. Clearly, $G$ satisfies the girth inequalities and has minimally $3$-rigid block and hole graphs.
This establishes the base of the induction.

Suppose that $P(k)$ is true for all $k=4,5,\ldots,l-1$ and let  $\hat{G}$ be a block and hole graph with a single block which satisfies the girth inequalities and has $|V(G)|=l$.
Note that, by Lemma \ref{GILemmaA}, each block and hole graph $G^\circ$ satisfies the Maxwell count. 
If $G$ contains a critical girth cycle $c$, which is not the boundary of a face, then by Lemma \ref{GILemmaC}
the face graphs $G_1$ and $G_2$ obtained by separating cycle division on $c$ both satisfy the girth inequalities. 
Note that $G_1$ and $G_2$ are each either face graphs with a single $B$-labelled face and fewer vertices than $G$, or, are triangulations of a triangle. 
It follows that both $G_1$ and $G_2$ have minimally $3$-rigid block and hole graphs. By the block substitution principle (Lemma \ref{l:graphequivalentsB}) the isostatic block of $G_2^\dagger$ may be substituted with $G_1^\dagger$ to obtain $G^\dagger$. Thus $G$ has minimally $3$-rigid block and hole graphs. 

Now suppose that there are no critical girth cycles in $G$, other than the boundary cycles of faces of $G$. 
If $G$ contains no edges of type $TT$ or $BH$ then, by Proposition \ref{FaceProp}, $G$ contains a proper cycle $\pi$, which is not the boundary of a face, such that $Ext(\pi)$ satisfies the Maxwell count.
By Lemma \ref{l:critequivalence}, $\pi$ is a critical girth cycle for $G$.
This is a contradiction and so $G$ must contain an edge of type $TT$ or $BH$. Moreover, such an edge must be contractible since any non-facial $3$-cycle would be a critical girth cycle for $G$.

Suppose a face graph $G'$ is obtained from $G$ by contracting a $TT$ or a $BH$ edge $e$. 
Then $G'$ is either a face graph with a single $B$-labelled face and fewer vertices than $G$, or, is a triangulation of a triangle.  By Lemma \ref{GILemmaB}, $G'$ satisfies the girth inequalities and so
$G'$ must have minimally $3$-rigid block and hole graphs. 
Now $G^\dagger$ may be obtained from $(G')^\dagger$ by vertex splitting and so $G$ also has minimally $3$-rigid block and hole graphs.
This establishes that the statement $P(l)$ is true and so, by the principle of induction, the theorem is proved.
\end{proof}

\begin{remark}
The girth inequalities give an efficient condition for the determination of generic isostaticity. Consider, for example, the structure graph of a  \emph{grounded geodesic dome} in which  a number of edges have been removed. By such a dome we mean, informally, a triangulated bar-joint framework with a ``fairly uniform" distribution of joints lying on a subset of a sphere determined by a half space and where the  edges are of  length less than a ``small multiple" (say two) of the minimum separation distance between framework joints. Also,
on the intersection of the plane and the sphere there is a cycle subgraph  of ``grounded" joints. If the windows arising from an edge depletion are sparsely positioned then it can be immediately evident that the girth inequalities prevail.
\end{remark}

In \cite{fin-whi} the following theorem is  obtained.

\begin{theorem}
Let $\hat{G}$ be a block and hole graph with one block and one hole
such that $|\partial B|=|\partial H|=r$.
If there exist $r$ vertex disjoint paths in $G$ which include the vertices of the labelled faces then $\hat{G}$ is $3$-rigid.
\end{theorem}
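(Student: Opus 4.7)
The plan is to invoke Theorem \ref{l:girth1block}, which reduces the statement to showing that $\hat{G}$ satisfies the girth inequalities; minimal $3$-rigidity then implies $3$-rigidity. Since $|\partial B| = |\partial H| = r$, a direct index calculation gives $\ind(\B \cup \H) = (r-3) - (r-3) = 0$, so by Lemma \ref{BHTight}(ii) the graph $G^\circ$ automatically satisfies the Maxwell count, and by Lemma \ref{BHTight}(iii) the index is invariant under taking complements within $\B \cup \H$.

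The next step is to classify proper cycles of $G$ according to their planar enclosures. Fix a planar realisation and let $c$ be a proper cycle with enclosed collection of labelled faces $\C$. If $\C$ is empty or contains both $B$ and $H$, then $\ind(\C) = 0$ and the girth inequality $|c| \geq 3$ holds trivially. The substantive case is when $c$ separates $B$ from $H$, so that $\C \in \{\{B\},\{H\}\}$; in either case $|\ind(\C)| = r - 3$, and the girth inequality becomes $|c| \geq r$.

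This is where the path-connectivity hypothesis is used. Let $P_1, \ldots, P_r$ be the $r$ vertex-disjoint paths supplied by the hypothesis, together exhausting the $2r$ boundary vertices; each $P_i$ may therefore be taken to have one endpoint on $\partial B$ and the other on $\partial H$. For any cycle $c$ separating $B$ from $H$ in the planar embedding, each $P_i$ has endpoints in complementary regions of the plane with respect to $c$, so by a Jordan-curve argument $P_i$ must contain a vertex lying on $c$. Vertex disjointness of the paths then produces $r$ distinct vertices of $c$, giving $|c| \geq r$ as required.

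The main obstacle is the bookkeeping in the planar Jordan argument, in particular confirming that each $P_i$ genuinely meets $c$ in a vertex (rather than only in an edge interior) and that an endpoint of $P_i$ lying on $c$ still supplies a vertex to the count. Both points follow from the fact that $P_i$ and $c$ are subgraphs of $G$, so any intersection is vertex-based, together with vertex disjointness of the $P_i$. Once the girth inequalities are verified, Theorem \ref{l:girth1block} delivers minimal $3$-rigidity of $\hat{G}$, and hence $3$-rigidity.
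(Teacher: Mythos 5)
Your proposal is correct and follows essentially the same route as the paper, which likewise derives this theorem from Theorem \ref{l:girth1block} by observing that any cycle separating the block from the hole must cross each of the $r$ vertex-disjoint paths and hence has length at least $r$, so the girth inequalities hold. Your write-up simply fills in the index bookkeeping and the Jordan-curve details that the paper leaves as "evident".
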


We note that this also follows from Theorem \ref{l:girth1block}.
 Indeed if the disjoint path condition holds then it is evident that every cycle $c$ associated with the single hole
has length at least $r$ since it must cross each of the $r$ paths. Thus the girth inequalities hold.
Similarly, Conjecture \ref{Conj_WW} follows on verifying that the $5$-connectedness condition ensures that the girth inequalities hold.

\subsection{Separation conditions}
The following separation conditions for block and hole graphs $\hat{G}$ were indicated in  \cite{fin-whi} (see Conjecture 5.1 and Proposition 5.4) and are necessary conditions for minimal $3$-rigidity.

\begin{cor}
\label{Separation}
Let $\hat{G}$ be a minimally $3$-rigid block and hole graph with face graph $G$ of type $(m,n)$.
\begin{enumerate}[(i)]
\item There are no edges in $G$ between nonadjacent vertices in the boundary of a labelled face of $G$. 
\item  
Each pair of $H$-labelled faces in $G$ share at most two vertices and these vertices must be adjacent.
\end{enumerate}
\end{cor}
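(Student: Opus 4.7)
The plan is to extract both parts from the girth inequalities for $G$, which hold whenever $\hat G$ is minimally $3$-rigid. Indeed, minimal $3$-rigidity of $\hat G$ implies $\hat G$ is $(3,6)$-tight, and hence by Corollary \ref{l:graphequivalentsA}(ii) the discus-and-hole graph $G^\dagger$ is $(3,6)$-tight. For any proper cycle $c$ of $G$ in a planar realisation with $\C$ the labelled faces inside $c$, the interior subgraph $K$ of $G^\dagger$ bounded by $c$ satisfies $f(K)=|c|+3-\ind(\C)$, using $f(D)=|c|+3$ for the simplicial disc $D$ of $S$ inside $c$, the freedom increments $r_F-3$ from removing each face interior, and $6-2r_B$ from reattaching each simplicial discus. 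The $(3,6)$-sparsity bound $f(K)\geq 6$ then gives $|c|\geq \ind(\C)+3$ and symmetrically $|c|\geq -\ind(\C)+3$, so $|c|\geq |\ind(\C)|+3$.

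For (i), suppose $e=v_iv_j$ is an edge of $G$ between non-adjacent vertices of a labelled face $F$ of perimeter $r\geq 4$. The theta-graph $\partial F\cup e$ partitions the sphere into three regions: $F$ itself and the interiors of the two proper cycles $c_k=e\cup p_k$ ($k=1,2$), where $p_1,p_2$ are the two arcs of $\partial F$ from $v_i$ to $v_j$. If $\C_k$ denotes the collection of labelled faces in the interior of $c_k$, then $\{F\}\cup\C_1\cup\C_2=\B\cup\H$, and Lemma \ref{BHTight} yields $\ind(\C_1)+\ind(\C_2)=-\ind(\{F\})=\pm(r-3)$. Adding the two girth inequalities and applying the triangle inequality gives $|c_1|+|c_2|\geq|\ind(\C_1)|+|\ind(\C_2)|+6\geq|\ind(\C_1)+\ind(\C_2)|+6=r+3$, contradicting $|c_1|+|c_2|=r+2$. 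The same argument handles $B$- and $H$-labelled $F$ uniformly.

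For (ii), the case of three or more shared vertices reduces to the two-vertex case: since $|\partial H_i|\geq 4$, three shared vertices contain a pair $u,v$ non-adjacent in $\partial H_1$, and either the pair is also non-adjacent in $\partial H_2$ or $uv\in\partial H_2\subseteq E(G)$ is an edge between non-adjacent vertices of $\partial H_1$, contradicting (i). It remains to rule out $H_1,H_2$ sharing exactly two vertices $u,v$ non-adjacent in both boundaries. In the planar embedding, the four arcs of $\partial H_1\cup\partial H_2$ from $u$ to $v$ are pairwise internally disjoint and, together with the holes $H_1,H_2$, partition the sphere into four regions, two of which ($R_1,R_2$) contain all remaining non-hole structure of $G$. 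Because $H_1, H_2$ are empty in $\hat G$ and every isostatic block attaches to $G$ through a boundary cycle lying in the closure of exactly one of $R_1, R_2$, the deletion of $\{u,v\}$ disconnects $\hat G$ into subgraphs $K_1, K_2$ with $V(K_1)\cap V(K_2)=\{u,v\}$ and $E(K_1)\cap E(K_2)\subseteq\{uv\}$. The standard three-dimensional hinge flex—rotating the vertices of $K_1$ infinitesimally about the line through generic positions of $u,v$ while holding those of $K_2$ fixed—is then a non-trivial infinitesimal motion of $\hat G$, contradicting infinitesimal rigidity.

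The main obstacle will be the separation-pair step in (ii): one must carefully argue that isostatic blocks, whose boundaries may touch $u$ or $v$, cannot bridge $R_1$ and $R_2$ once $\{u,v\}$ is removed. This uses the planar separation of $R_1, R_2$ together with the generic $3$-connectedness of minimally $3$-rigid blocks to ensure each block stays attached to a single component. Granted this, the hinge-flex argument is folklore in generic $3$-rigidity theory.
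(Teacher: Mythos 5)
Your part (i) is correct and in fact supplies the quantitative detail that the paper's one\-/line proof omits: the theta\-/graph $\partial F\cup e$ gives two proper cycles $c_1,c_2$ with $|c_1|+|c_2|=r+2$, while summing the two girth inequalities and using $\ind(\C_1)+\ind(\C_2)=-\ind(\{F\})$ forces $|c_1|+|c_2|\geq r+3$. Likewise, your treatment of the case where $H_1,H_2$ share \emph{exactly} two non-adjacent vertices is sound: the four arcs are then genuinely internally disjoint, the two non-hole regions $R_1,R_2$ meet only in $\{u,v\}$, each block attaches inside the closure of a single region, and the hinge flex about the line through $p(u),p(v)$ kills infinitesimal rigidity. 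This is essentially the paper's route (girth inequalities for (i), failure of $3$-connectedness for (ii)), with more detail.

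The gap is in your reduction of the three-or-more shared vertices case. Finding a pair $u,v$ of shared vertices that is non-adjacent on both boundaries does \emph{not} place you in the ``exactly two shared vertices'' situation: if a third shared vertex $w$ exists, then the arc of $\partial H_1$ through $w$ and the arc of $\partial H_2$ through $w$ are \emph{not} internally disjoint, the complement of $H_1\cup H_2$ has $k$ (not two) regions, and $\{u,v\}$ need not be a separation pair at all --- the vertex $w$ bridges what would otherwise be the two sides, so both the four-region picture and the disconnection claim fail. A correct argument for $k\geq 3$ must be different; for instance, the $k$ complementary regions $R_1,\dots,R_k$ are each bounded by a proper cycle $c_j$ consisting of one arc of $\partial H_1$ and one of $\partial H_2$, and summing the girth inequalities $|c_j|\geq \ind(\C_j)+3$ over $j$, using $\sum_j|c_j|=|\partial H_1|+|\partial H_2|$ and $\sum_j\ind(\C_j)=-\ind(\{H_1,H_2\})=|\partial H_1|+|\partial H_2|-6$, yields $0\geq 3k-6$, i.e.\ $k\leq 2$. (This is the counting the paper is gesturing at with ``by the girth inequalities there exists a $B$-labelled face within their joint perimeter cycle.'') As written, your proof of (ii) only establishes the two-vertex case.
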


\proof
$(i)$ If there exists an edge between two nonadjacent vertices in the boundary of a labelled face of $G$ then there exists a cycle in $G$ which violates the girth inequalities.  

$(ii)$ If two $H$-labelled faces in $G$ share more than two vertices
then by the girth inequalities there exists a $B$-labelled face within their joint perimeter cycle. 
However, this implies that the block and hole graphs for $G$ fail to be $3$-connected.
Similarly, if two $H$-labelled faces in $G$ share two nonadjacent vertices then the block and hole graphs for $G$ fail to be $3$-connected.

  
\endproof

The following example shows that Conjectures 5.1 and 5.2 of  \cite{fin-whi} are not true in general. 

\begin{example}
\label{CounterEx}
Let $G$ be the face graph of type $(2,2)$ with planar realisation illustrated in Fig.~\ref{f:Phatgraph}.
The block and hole graph $G^\circ$ satisfies the separation conditions
of Corollary \ref{Separation} (and of \cite{fin-whi}).
Also, $G^\circ$ is $(3,6)$-tight and, by Lemma \ref{GILemmaA}, 
$G$ satisfies the girth inequalities. 
However, $G^\circ$ is not minimally $3$-rigid since it may be reduced to a $2$-connected graph by inverse Henneberg moves on vertices of degree $3$. 
\end{example}

\begin{center}
\begin{figure}[ht]
\centering
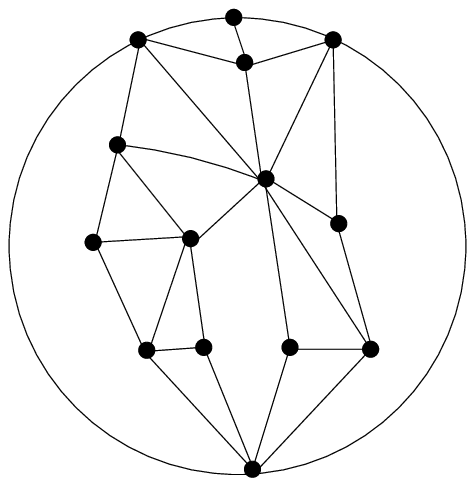
\caption{A face graph of type $(2,2)$ which satisfies the girth inequalities and separation conditions but does not have a $3$-rigid
block and hole graph.}
\label{f:Phatgraph}
\end{figure}
\end{center}

\subsection{Block-hole transposition}
\label{BHTrans}
We next observe that the characterisation of minimally $3$-rigid block and hole graphs with a single block also provides a characterisation in the single hole case.
Let $G_t$ be the face graph  obtained from graph $G$ by replacing $B$ labels by $H$ labels  and $H$ labels by $B$ labels.

\begin{cor}\label{t:onehole}  Let 
$\hat{G}$ be a block and hole graph with a single hole. 
Then the following are equivalent.
\begin{enumerate}[(i)]
\item   $\hat{G}$ is minimally $3$-rigid.
\item $\hat{G}$ is $(3,6)$-tight.
\item  $\hat{G}$ is constructible from $K_3$ by vertex splitting and isostatic block substitution.
\item $\hat{G}$ satisfies the girth inequalities.
\end{enumerate}
In particular, ${G}^\dagger$ is minimally $3$-rigid if and only if ${G_t}^\dagger$ is minimally $3$-rigid.
\end{cor}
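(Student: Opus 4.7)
The plan is to reduce Corollary~\ref{t:onehole} to the single-block case already established in Theorem~\ref{t:oneblock} (i.e.\ Theorems~\ref{t:oneblockA} and \ref{l:girth1block}) by invoking the block--hole duality of Finbow-Singh, Ross and Whiteley \cite{fin-ros-whi}. Given a block and hole graph $\hat{G}$ whose face graph $G$ has type $(m,1)$, I would let $G_t$ denote the face graph of type $(1,m)$ obtained by interchanging the $B$ and $H$ labels of $G$; any block and hole graph $\hat{G}_t$ on $G_t$ is then covered by the single-block theorem.

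I would first observe that conditions (ii) and (iv) are manifestly symmetric under label transposition. The girth inequality at a proper cycle $c$ reads $|c|\geq |\ind(\C)|+3$; swapping $B$ and $H$ reverses the sign of $\ind$ but leaves $|\ind|$ invariant, so (iv) holds for $\hat{G}$ iff it holds for $\hat{G}_t$. For (ii), Corollary~\ref{l:graphequivalentsA} reduces $(3,6)$-tightness to tightness of the discus and hole graph, Lemma~\ref{BHTight}(ii) identifies the Maxwell count with $\ind(\B\cup \H)=0$ (again invariant under swap), and Proposition~\ref{l:girth} then reads off $(3,6)$-sparsity from the girth inequalities. The symmetry of (i) is precisely the duality theorem of \cite{fin-ros-whi}. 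Applying Theorem~\ref{t:oneblock} to $\hat{G}_t$ therefore yields the equivalence of (i), (ii) and (iv) for $\hat{G}$, and the final ``in particular'' clause is a direct restatement of this duality at the level of the discus and hole graphs $G^\dagger$ and $G_t^\dagger$.

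It remains to treat (iii). The implication (iii)$\Rightarrow$(i) is immediate, since vertex splitting preserves minimal $3$-rigidity (Appendix) and so does isostatic block substitution (Lemma~\ref{l:graphequivalentsB}). For (i)$\Rightarrow$(iii), duality together with Theorem~\ref{t:oneblockA} furnishes a construction of $(G_t)^\dagger$ from $K_3$ by vertex splits and isostatic block substitutions, and the task is to transport this construction across to $G^\dagger$. The plan is to apply the isostatic block substitution principle at the single $B$-face of $G_t$ (equivalently the single $H$-face of $G$) and then reinterpret the resulting construction by further substitutions at each of the $B$-faces $B_1,\dots,B_m$ of $G$, using the minimally $3$-rigid subgraphs produced by vertex splitting during the construction of $(G_t)^\dagger$ as the required isostatic blocks of $\hat{G}$.

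The main obstacle is precisely this last bookkeeping step: one has to verify that the vertex splits and block substitutions appearing in the construction of $(G_t)^\dagger$ can be reorganised so that intermediate isostatic subgraphs are available to realise the several blocks $\hat{B}_1,\dots,\hat{B}_m$ of $\hat{G}$, rather than just the single isostatic block of $\hat{G}_t$. Once this reorganisation is carried out the equivalence of (i)--(iv) for the single-hole case is complete, and the ``in particular'' statement of the corollary records, as a special case, the coincidence of minimal $3$-rigidity for $G^\dagger$ and $G_t^\dagger$ already noted in the second paragraph.
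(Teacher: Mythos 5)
Your high-level strategy --- transpose the labels and reduce to the single-block case --- is the same as the paper's, and your observations that conditions (ii) and (iv) are invariant under $B$--$H$ transposition are correct and are essentially how the paper argues. But there are two points where your proposal diverges in a way that matters. First, you import the equivalence of condition (i) under transposition from the general duality theorem of \cite{fin-ros-whi}. The paper does not do this: the ``in particular'' clause of the corollary \emph{is} that duality statement restricted to this class, so deriving it by citing the duality theorem makes that part of the corollary circular (or at best vacuous), whereas the paper obtains it as a consequence of the transposition-invariance of the girth inequalities together with the characterisation (i)$\Leftrightarrow$(iv) proved on each side.

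Second, and more seriously, your argument for (iii) --- and hence for closing the cycle of implications without leaning on \cite{fin-ros-whi} --- is left unfinished: you explicitly name the transport of the construction from $(G_t)^\dagger$ to $G^\dagger$ as ``the main obstacle'' and stop there. This is a genuine gap, and it is exactly the step the paper's proof supplies. The resolution is that the reduction scheme of Corollary \ref{c:canconstruct} lives at the level of the \emph{face graph}: it is a rooted tree of $TT$ and $BH$ edge contractions and critical separating cycle divisions, and every one of these moves is a move on the underlying labelled planar graph that remains a legitimate move of the same type after transposing all labels (a $BH$ edge stays a $BH$ edge, a proper cycle stays a proper cycle). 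So the reduction tree for $G_t$ is, node by node, a reduction tree for $G$ with all labels swapped, terminating in triangulations of a triangle. Reversing it, each edge contraction reverses to a vertex splitting of the discus and hole graph, and each cycle division reverses to an isostatic block substitution in which the (inductively minimally $3$-rigid) graph attached to one child is substituted for the simplicial discus on the dividing cycle in the other child. This builds $G^\dagger$ from $K_3$ by exactly the moves of (iii), with no reorganisation or separate provision of the blocks $\hat{B}_1,\dots,\hat{B}_m$ required, and Lemma \ref{l:graphequivalentsB} then transfers the conclusion to an arbitrary block and hole graph on $G$. Without this step your proposal establishes at most the equivalence of (i), (ii) and (iv), and only by assuming the external duality result.
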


\begin{proof}
The implications $(iii)\implies (i)\Rightarrow (ii)\Rightarrow (iv)$ have already been established more generally for face graphs of type $(m,n)$.
If $G$ satisfies the girth inequalities then 
 $G_t$ also satisfies the girth inequalities and so there exists a reduction scheme for $G_t$ as described in Corollary \ref{c:canconstruct}. This same reduction scheme may be applied to show that the block and hole graphs for $G$ are minimally $3$-rigid.
Thus the equivalence of $(i)-(iv)$ is established.
The final statement follows since $G$ satisfies the girth inequalities if and only if $G_t$ satisfies the girth inequalities.
\end{proof}

\section{Appendix}
A bar-joint framework in  $\bR^3$ consists of a simple graph $G=(V,E)$ and a placement $p:V\to \bR^3$, such that $p(v)\not=p(w)$ for each edge $vw\in E$. 
An {\em infinitesimal flex} of $(G,p)$ is an assignment $u:V\to \bR^3$ which satisfies the infinitesimal flex condition
$(u(v)-u(w))\cdot (p(v)-p(w))=0$ for every edge $vw\in E$. 
A \emph{trivial infinitesimal flex} of $(G,p)$ is one which extends to an infinitesimal flex of any containing framework, which is to say that it is a linear combination of a translation infinitesimal flex and a rotation infinitesimal flex. The framework $(G,p)$ is \emph{infinitesimally rigid} if the only infinitesimal flexes are trivial and the graph $G$ is $3$-rigid if every generic framework $(G,p)$ is infinitesimally rigid. See \cite{gra-ser-ser}.

\subsection{Vertex splitting}The proof of rigidity preservation under vertex splitting indicated in Whiteley \cite{whi-vertex} is based on static self-stresses and $3$-frames. For completeness we give an 
infinitesimal flex proof of this important result.

Let $G=(V,E)$ with $v_1,v_2,\dots ,v_r$ the vertices of $V$ and $v_1v_2, v_1v_3, v_1v_4$ edges in $E$. Let $G'=(V',E')$ arise from a vertex splitting move on $v_1$ which introduces the new vertex $v_0$ and the new edges $v_0v_1, v_0v_2, v_0v_3$. 
Some of the remaining edges $v_1v_t$ may be replaced by the edges $v_0v_t$. 
Let $p:V\to \bR^3$ be a generic realisation with $p(v_i)=p_i$
and for $n=1,2, \dots $ let $q^{(n)}:V'\to \bR^3$ be  nongeneric realisations which extend $p$, where $q^{(n)}(v_0)= q_0^{n}, n=1,2, \dots $ is a sequence of points on the line segment from $p_1$ to $p_4$ which converges to $p_1$. 
\begin{center}
\begin{figure}[ht]
\centering
\scalebox{0.8}{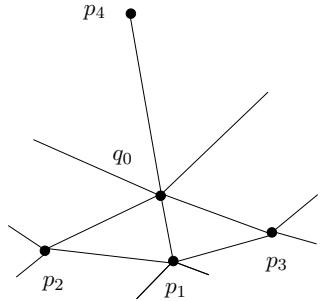}
\caption{The vertex $q_0 = q^{(1)}(v_0)$.}
\label{f:vertexsplit}
\end{figure}
\end{center}
Let $u^{(n)}, n=1,2, \dots ,$ be infinitesimal flexes of $(G',q^{(n)}), n=1,2, \dots ,$ which are of unit norm in $\bR^{3(r+1)}$. By taking a subsequence we may assume that $u^{(n)}$ converges to an infinitesimal flex $u^{(\infty)}$ of the degenerate realisation of
$G'$ with $q(v_0)=q(v_1)=q_1$. In view of the line segment condition we have, 
\[
u^{(n)}_0\cdot(p_0^{(n)}-p_4)=u^{(n)}_1\cdot(p_1-p_4).
\] 
for each $n$. Also we have,
\[
u^{(n)}_0\cdot(p_0^{(n)}-p_2)=u^{(n)}_1\cdot(p_1-p_2),\quad u^{(n)}_0\cdot(p_0^{(n)}-p_3)=u^{(n)}_1\cdot(p_1-p_3),
\]
and it follows from the generic position of $p_2,p_3$ and $p_4$ that
$u^{(\infty)}_0=u^{(\infty)}_1$. Thus $u^{(\infty)}$ restricts to an infinitesimal flex $u$  of $(G,p)$. Note that the norm of $u$ is nonzero.

We now use the general construction of the limit flex in the previous paragraph to show that if $G'$ is not $3$-rigid then neither is $G$.
Indeed if  $G'$ is not $3$-rigid then there exists a sequence  as above in which each flex $u^{(n)}$ is orthogonal in $\bR^{3(r+1)}$
to the space of trivial infinitesimal flexes. It follows that
 $u^{(\infty)}$ is similarly orthogonal and that the restriction flex $u$ of $(G,p)$ is orthogonal in  $\bR^{3r}$
to the space of trivial infinitesimal flexes. Since $u$ is nonzero
$G$ is not $3$-rigid, as desired.

\subsection{A proof of Gluck's theorem}
\label{Gluck}
In our terminology Gluck's theorem (\cite{glu}) asserts that the (unlabelled) face graphs $G$ of type $(0,0)$ are $3$-rigid. For convenience we give a direct proof here.
In view of $3$-rigidity preservation under vertex splitting it will be enough to show that $G$ derives from $K_3$ by a sequence of vertex splitting moves. 

Suppose that $G$ is not the result of a planar vertex splitting move on a face graph of type $(0,0)$. We show that $G=K_3$. Suppose that $G$  has the minimum number of vertices amongst all such graphs and suppose also, by way of contradiction, that $G \neq K_3$. Every edge of $G$ is of type $TT$ and we may consider an edge $e=uv$ with associated edges $xu, xv$ and $yu,yv$ for its adjacent faces. Since $G$ is minimal the contraction of $G$ under $e$ cannot be a simple graph and so there is a nonfacial triangle with edges $zu,zv$ and $uv$.  But now the subgraph consisting of the $3$-cycle $z,u,v$ and its interior is of type $(0,0)$ and is a smaller graph than $G$. It is not equal to $K_3$, since it contains $x$ or $y$, and so it has an edge contraction. Thus $G$ itself has an edge contraction to a simple graph, a contradiction.

\bibliographystyle{amsplain}

\begin{thebibliography}{30}

\bibitem{cau} A. Cauchy, Sur les polygones et poly\`{e}dres. Second M\'{e}moir. J \'{E}cole Polytechn. 9 (1813) 87-99; Oeuvres. T. 1. Paris 1905, pp. 26-38.

\bibitem{dehn} M. Dehn, \"{U}ber die starreit konvexer polyeder, Math. Ann. 77 (1916), 466-473.

\bibitem{fin-whi} W. Finbow-Singh and W. Whiteley, 
Isostatic block and hole frameworks, SIAM J. Discrete Math. 27 (2013) 991-1020. 

\bibitem{fin-ros-whi}  W. Finbow-Singh, E. Ross and W. Whiteley,
 The rigidity of spherical frameworks:  Swapping blocks and holes in spherical frameworks, SIAM J. Discrete Math. 26 (2012), 280-304. 





\bibitem{glu} H. Gluck,
{Almost all simply connected closed
surfaces are rigid,} in Geometric Topology, Lecture Notes in
Math., no. 438, Springer-Verlag, Berlin, 1975, pp. 225-239.


\bibitem{gra-ser-ser} J. Graver, B. Servatius, H. Servatius, {\em Combinatorial rigidity}. 
Graduate Studies in Mathematics, 2. American Mathematical Society, Providence, RI, 1993.





\bibitem{whi-poly1} W. Whiteley, Infinitesimally rigid polyhedra I : Statics of frameworks, Trans. Amer. Math. Soc.,
285 (1984), 431-465.

\bibitem{whi-poly2} W. Whiteley, Infinitesimally rigid polyhedra. II: Modified spherical frameworks,
Trans. Amer. Math. Soc., 306 (1988), 115-139.


\bibitem{whi-vertex}W. Whiteley, Vertex splitting in isostatic frameworks, Structural Topology, 16 (1990), 23-30.

\end{thebibliography}

\end{document}